\documentclass[leqno,final]{siamltex}
\setlength{\hoffset}{.7in}
\pagestyle{myheadings}

\usepackage{graphicx} 
\usepackage{amsmath,amstext,amssymb,bm}
\usepackage{leftidx}
\numberwithin{equation}{section}
\usepackage{xcolor}
\usepackage{soul}
\usepackage{tikz}
\usetikzlibrary{shapes,arrows}
\usepackage{mathrsfs}

\newtheorem{remark}{Remark}[section]

\allowdisplaybreaks[4]

\newcommand{\eps}{\varepsilon}
\newcommand{\Ome}{{\Omega}}

\newcommand{\R}{\mathbb{R}}

\newcommand{\N}{\mathbb{N}}


\begin{document}

\title{A new theory of fractional differential calculus\thanks{This work was partially supported by the NSF grant 1620168.} }
\markboth{X. FENG and M. SUTTON}{A NEW THEORY OF WEAK FRACTIONAL CALCULUS}

%
	
\author{Xiaobing Feng\thanks{Department of Mathematics, The University of Tennessee, 
Knoxville, TN 37996. U.S.A. (xfeng@math.utk.edu).}
\and{Mitchell Sutton}\thanks{Department of Mathematics, The University of Tennessee, 
Knoxville, TN 37996. U.S.A. (msutto11@vols.utk.edu).} }

\date{}

\maketitle
 
\thispagestyle{empty}

\begin{abstract}
    This paper presents a self-contained new theory of weak fractional differential calculus in one-dimension.
    The crux of this new theory is the introduction of a weak fractional derivative notion which is a natural generalization of integer order weak derivatives; it also helps to unify multiple existing fractional derivative definitions and characterize what functions are fractionally differentiable. Various calculus rules including a fundamental theorem calculus, product and chain rules, and integration by parts formulas are established for weak fractional derivatives. 
    Additionally, relationships with classical fractional derivatives and detailed characterizations  of weakly fractional differentiable functions are also established.
    Furthermore, the notion of weak fractional derivatives is also systematically extended to general distributions instead of only to some special distributions. 
    This new theory lays down a solid theoretical foundation for systematically and rigorously developing new theories of fractional Sobolev spaces, fractional calculus of variations, and fractional PDEs as well as their numerical solutions in subsequent works. 
    This paper is a concise presentation of the materials of Sections 1-4 and 6 of reference 
    \cite{Feng_Sutton}. 
    
\end{abstract}

\begin{keywords}
    Weak fractional derivatives, fractional differential calculus, fundamental theorem of calculus, product and chain rules, 
    fractional derivatives of distributions.
\end{keywords}

\begin{AMS}
    26A33, 
    34K37, 
    35R11, 
    46E35, 
\end{AMS}


 

\section{Introduction}\label{sec-1}

    Similar to the classical integer order calculus, the classical fractional order calculus also consists of two integral parts, namely, the fractional order integral calculus and the fractional order differential calculus. It is concerned with studying their properties/rules and the interplay between the two notions, which is often characterized by the so-called {\em Fundamental Theorem of Calculus}. Fractional calculus also has had a long history, which can be traced back to L'H\^{o}pital (1695), Wallis (1697), Euler (1738), Laplace (1812), Lacroix (1820), Fourier (1822), Abel (1823), Liouville (1832), Riemann (1847), Leibniz (1853), Gr\"unwald (1867), Letnikov (1868) and many others. We refer the reader to \cite{Guo, Podlubny, Samko} and the references therein for a detailed exposition about the history of the classical fractional calculus. 
 
    In the past twenty years fractional calculus and fractional (order) differential equations have garnered a lot of interest and attention both from the PDE community (in the name of nonlocal PDEs) and in the applied mathematics and scientific communities. Besides the genuine mathematical interest and curiosity, this trend has also been driven by intriguing scientific and engineering applications which give rise to fractional order differential equation models to better describe the (time) memory effect and the (space) nonlocal phenomena (cf. \cite{Du2019,Guo,Hilfer,Kilbas, Meerschaert} and the references therein). It is the rise of these applications that revitalizes the field of fractional calculus and fractional differential equations and calls for further research in the field, including to develop new numerical methods for solving various fractional order problems. 
 
    Although a lot of progress has been achieved in the past twenty years in the field of fractional calculus and fractional differential equations, many fundamental issues remain to be addressed. For a novice in the field, one would immediately be clogged and confused by many (non-equivalent) definitions of fractional derivatives. The immediate ramification of the situation is the difficulty for building/choosing ``correct" fractional models to study analytically and to solve numerically. Moreover, compared to the classical integer order calculus, the classical fractional calculus still has many missing components. For example, many basic calculus rules (such as product and chain rules) are not completely settled, the physical and geometric meaning of fractional derivatives are not fully understood, and a thorough characterization of the fractional differentiability seems still missing. Furthermore, at the differential equation (DE) level, the gap between the integer order and fractional order cases is even wider. For a given integer order DE, it would be accustomed for one to interpret the derivatives in the DE as weak derivatives and the solution as a weak solution. However, there is no parallel weak derivative and solution theory in the fractional order case so far. As a result, various solution concepts and theories, which may not be equivalent, have been used for fractional order DEs, especially, for fractional (order) partial differential equations (FPDEs). The non-equivalence of solution concepts may cause confusions and misunderstanding of the underlying fractional order problem.
 
    The primary goal of this paper is to develop a new weak fractional differential calculus theory, starting from introducing the definition, proving various 
    calculus rules, to establishing the {\em Fundamental Theorem of Weak Fractional Calculus} (FTwFC).
    These results lay down the ground work for developing a new fractional Sobolev space theory
    in a companion paper \cite{Feng_Sutton1a}. 
    Together they provide a first step/attempt in achieving the overreaching goal of providing the missing components to, and to expand the reach of, the classical fractional calculus and fractional differential equations, which will be continued in subsequent works \cite{Feng_Sutton2,Feng_Sutton3}. 
 
    The remainder of this paper is organized as follows. In Section \ref{sec-2},  we recall the definitions of some well-known classical fractional derivatives and a few relevant 
    properties, which all can be found in the beginning chapters of the reference \cite{Samko}
    (also see \cite[Section 2]{Feng_Sutton}). Additionally, we present an alternative perspective of the classical fractional derivatives. This is to introduce the so-called {\em Fundamental Theorem of Classical Fractional Calculus} (FTcFC) and a new interpretation/definition of 
    the classical
    Riemann-Liouville fractional derivatives. Such a viewpoint is essential to developing the 
    weak fractional calculus theory in this paper and the fractional Sobolev space theory in \cite{Feng_Sutton1a}.
    In Section \ref{sec-4}, we first introduce the notion of weak fractional derivatives using integration by parts and special test functions, which is analogous to the notion of integer order weak derivatives. It is proved that weak fractional derivatives inherit the fundamental properties of classical fractional derivatives and the generality eliminates the need for numerous definitions as seen in the classical theory. After having proved a characterization result, we then establish the FTwFC, product and chain rules, and integration by parts formulas. Many of these results and their proof techniques will look familiar to the informed reader because they are adapted and refined  from those used in the integer order weak differential calculus theory (cf. \cite{Adams, Evans, Meyers}). The desired differential calculus components are proved for both left and right weak fractional derivatives and covers both finite and infinite domain cases. 
    In Section \ref{sec-6}, we extend the notion of weak fractional derivatives to distributions. Unlike the existing fractional derivative definitions which only apply to a certain subset of distributions, we aim to define weak fractional derivatives for general distributions.
    Due to the pollution effect of the fractional derivatives, the main difficulty to overcome 
    is to design a good domain extension for a given distribution, which is achieved by using 
    a partition of the unity idea in this section. 
    Finally, the paper is concluded by a short summary and a few concluding remarks given in Section \ref{sec-7}.

\section{Preliminaries}\label{sec-2}
    In this section we first recall definitions of classical fractional integrals and derivatives 
    without stating their well-known properties, we refer 
    the interested reader to \cite{Feng_Sutton} for a collection of useful properties, 
    and to \cite{Samko} for a more extensive collection and their detailed proofs. 
%
    We then present a couple lesser known properties of classical fractional derivatives which will be crucially and repeatedly used in the subsequent sections. One of which is the pollution behavior of classical fractional derivatives of compactly supported smooth function, the other is an equivalent definition of the Riemann-Liouville fractional derivatives based 
    on the FTcFC.

    Since the definitions of fractional integrals and derivatives are domain-dependent, it will be imperative for us to separate the cases when the domain is finite or infinite. In this section and in the later sections, we shall consider both finite interval $\Omega=(a,b)$ for $\infty < a < b < \infty$ and the infinite interval $\Omega=\R:=(-\infty, \infty)$. 

    Throughout this paper, $\Gamma: \R\to \R$ denotes the standard Gamma function and $\N$ stands for the set of all positive integers. In addition, $C$ will be used to denote a generic positive constant which may be different at different locations and $f^{(n)}$ denotes the $n$th order classical derivative of $f$ for $n\in\N$. Unless stated otherwise, all integrals $\int_a^b \varphi(x)\, dx$ are understood as Riemann integrals in this section. 
    
    \subsection{Definitions of Classical Fractional Integrals and Derivatives}\label{sec-2.1}
     In this subsection, we recall the definitions 
        of classical Riemann-Liouville, Caputo, Gr\"{u}nwald-Letnikov, and Fourier fractional integrals and derivatives. 

        \subsubsection{\bf Definitions on a Finite Interval}\label{sec-2.1.1}
            Historically, the integral calculus was invented before the differential calculus in the classical Newton-Leibniz (integer) calculus  and the two are intimately connected through the well known {\em Fundamental Theorem of Calculus} (or Newton-Leibniz Theorem). It is interesting to note that the same is true for the classical fractional calculus. Indeed, in order to give a definition of fractional derivatives,  we first need to recall the definition of fractional integrals.

            \begin{definition}[cf. \cite{Samko}] \label{def2.1}
                Let $\sigma>0$ and $f:[a,b] \rightarrow \R$. The $\sigma$  order left Riemann-Liouville fractional integral of $f$ is defined by
                \begin{align} \label{left_RL_int}
                    {_{a}}{I}{^{\sigma}_{x}} f(x) : = \dfrac{1}{\Gamma(\sigma)} \int_{a}^{x} \dfrac{f(y)}{(x - y)^{1 - \sigma}} \, dy \qquad\forall x\in [a,b], 
                \end{align}
                and the $\sigma$ order right Riemann-Liouville fractional integral of $f$ is defined by 
                \begin{align}\label{right_RL_int}
                    {_{x}}{I}{^{\sigma}_{b}} f(x) : = \dfrac{1}{\Gamma(\sigma)} \int_{x}^{b} \dfrac{f(y)}{(y-x)^{1 - \sigma}}\, dy\qquad \forall x\in [a,b].
                \end{align}
            \end{definition}
            ${_{a}}{I}{^{\sigma}_{x}}$ and $ {_{x}}{I}{^{\sigma}_{b}}$ are respectively called the left and right Riemann-Liouville fractional integral operators. We also set ${^{-}}{I}{^{\sigma}} := {_{a}}{I}{^{\sigma}_{x}}$ and ${^{+}}{I}{^{\sigma}} := {_{x}}{I}{^{\sigma}_{b}}$.

            \begin{remark}
                It is well known (\cite{Samko}) that both $ {_{a}}{I}{^{\sigma}_{x}}$ and ${_{x}}{I}{^{\sigma}_{b}}$ are convolution-type operators (with different kernel functions). 
            \end{remark}

            With the help of the above fractional integrals,  the definitions of two popular Riemann-Liouville fractional derivatives are given below.  

            \begin{definition}[cf. \cite{Samko}] \label{def2.2} 
                Let $n-1 < \alpha < n$ and $f : [a,b] \rightarrow \R$. The  $\alpha$ order left Riemann-Liouville fractional derivative of $f$ is defined by
                \begin{align}\label{left_RL_derivative}
                    {_{a}}{{D}}{_{x}^{\alpha}} f(x):&= \dfrac{d^{n}}{dx^n} \Bigl( {_{a}}{I}{^{n-\alpha}_{x}}f(x) \Bigr) \\
                    &=\dfrac{1}{\Gamma(n - \alpha)} \dfrac{d^{n}}{dx^n}\int_{a}^{x} \dfrac{f(y)}{(x - y)^{1  + \alpha -n} } \, dy    \qquad \forall x\in [a,b],\nonumber
                \end{align} 
                and the  $\alpha$ order right Riemann-Liouville fractional derivative of $f$ is defined by
                \begin{align}\label{right_RL_derivative}
                    {_{x}}{{D}}{^{\alpha}_{b}}f(x) :&= (-1)^n  \dfrac{d^n}{dx^n} \Bigl({_{x}}{I}{^{n-\alpha}_{b}} f(x) \Bigr)  \\
                    &= \dfrac{(-1)^n}{\Gamma(n - \alpha)} \dfrac{d^n}{dx^n} \int_{x}^{b}  \dfrac{f(y)}{(y - x)^{1 + \alpha -n} }  \,dy     \qquad \forall x\in [a,b].
                    \nonumber
                \end{align}
                ${_{a}}{{D}}{_{x}^{\alpha}} $ and ${_{x}}{{D}}{^{\alpha}_{b}}$ are called the left and right Riemann-Liouville fractional derivative(or differential) operators, respectively. 
            \end{definition}
    
            Another fractional derivative notion is the Caputo fractional derivative, which is widely used in initial value problems of factional order ODEs; particularly for fractional differentiation in time.
 
            \begin{definition}[cf. \cite{Samko}] \label{def2.3}
                Let $n-1 < \alpha <n$ and $f :[a,b] \rightarrow \R$.  The $\alpha$ order left Caputo fractional derivative of $f$ is defined by
                \begin{align}\label{left_Caputo}
                    {^{C}_{a}}{{D}}{_{x}^{\alpha}} f(x) := \dfrac{1}{\Gamma(n- \alpha)} \int_{a}^{x} \dfrac{f^{(n)}(y)}{(x- y)^{ 1+ \alpha -n}} \,dy \qquad \forall x\in [a,b],
                \end{align}
                and the $\alpha$ order right Caputo fractional derivative of $f$ is defined by
                \begin{align}\label{right_Caputo}
                    {^{C}_{x}}{{D}}{_{b}^{\alpha}} f(x) := \dfrac{(-1)^n}{\Gamma(n - \alpha)}  \int_{x}^{b} \dfrac{f^{(n)}(y)}{(y - x)^{1+\alpha -n}} \, dy \qquad\forall x\in [a,b].
                \end{align}
            \end{definition}

            \begin{remark}
                The definitions of the $\alpha$ order Caputo fractional derivatives require that $f^{(n)}$  exists almost everywhere. 
                The relationship between Riemann-Liouville and Caputo fractional derivatives is given by the following identities: 
                \begin{align}\label{RL_Caputo_l}
                    {^{C}_{a}}{{D}}{_{x}^{\alpha}} f(x) &:= {_{a}}{{D}}{_{x}^{\alpha}} f(x) -\sum_{k=0}^{n-1} \frac{f^{(k)}(a)}{\Gamma(k+1-\alpha)} (x-a)^{k-\alpha},\\
                    {^{C}_{x}}{{D}}{_{b}^{\alpha}} f(x) &:={_{x}}{{D}}{^{\alpha}_{b}} f(x) -\sum_{k=0}^{n-1}\frac{f^{(k)}(b)}{\Gamma(k+1-\alpha) } (b-x)^{k-\alpha}.
                    \label{RL_Caputo_r} 
                \end{align}
                It is easy to check that the following ``weak" definitions of the Caputo fractional derivatives in the case $0<\alpha<1$: 
                \begin{align}\label{weak_RL_Caputo_l} 
                    {^{C}_{a}}{{D}}{_{x}^{\alpha}} f(x) &:= {_{a}}{{D}}{_{x}^{\alpha}} \bigl[ f(x) -f(a) \bigr] =\frac{d}{dx} \Bigl[ {_{a}}{I}{^{1-\alpha}_{x}} \Bigl( f(x)-f(a) \Bigr) \Bigr],\\
                    {^{C}_{x}}{{D}}{_{b}^{\alpha}} f(x) &:={_{x}}{{D}}{^{\alpha}_{b}} \bigl[ f(x)-f(b) \bigr] =-\frac{d}{dx} \Bigl[{_{x}}{I}{^{1-\alpha}_{b}} \Bigl( f(x) -f(b) \Bigr) \Bigr],
                    \label{weak_RL_Caputo_r} 
                \end{align}
                which do not require the existence of $f'(x)$, instead, they require the existence of $f(a)$ and $f(b)$, respectively.
            \end{remark} 

        Unlike the Riemann-Liouville and Caputo derivatives which use an integral operator to induce a fractional order derivative, a natural question is if a fractional derivative can be defined as a limit of some difference quotient similar to the definition of the integer order derivative. Although there have been some attempts in this direction (cf. \cite{Khalil2014}), 
      we only recall the well-known Gr\"{u}nwald-Letnikov fractional derivatives as they are related to the Riemann-Liouville derivatives.
	
            \begin{definition} [cf. \cite{Samko}] 
               Let $0 < \alpha <1$ and $f:[a,b] \rightarrow \R$. The \textit{left} Gr\"{u}nwald-Letnikov fractional derivative of $f$ is defined by
                \begin{align*}
                    {^{GL}_{a}}{D}{^{\alpha}_{x}} f(x) : = \lim_{h \rightarrow 0^+} \dfrac{1}{h^{\alpha}} \sum_{k = 0 }^{[(x-a)/h]} \dfrac{(-1)^{k} \Gamma(1 + \alpha)}{\Gamma(k+1) \Gamma(\alpha - k +1)} f(x - kh) \qquad \forall x \in [a,b ]
                \end{align*}
                and the \textit{right} Gr\"{u}nwald-Letnikov fractional derivative of $f$ is defined by
                \begin{align*}
                    {^{GL}_{x}}{D}{^{\alpha}_{b}} f(x) : = \lim_{h \rightarrow 0^+} \dfrac{1}{h^{\alpha}} \sum_{k = 0 }^{[(b-x)/h]} \dfrac{(-1)^{k+1} \Gamma(1 + \alpha)}{\Gamma(k+1) \Gamma(\alpha - k +1)} f(x + kh) \qquad \forall x \in [a,b].
                \end{align*}
            \end{definition}

            Clearly, for the fractional derivative, the difference quotients are much more complicated. It can be shown \cite{Samko} that the Gr\"{u}nwald-Letnikov fractional derivative and the Riemann-Liouville derivative are equivalent for sufficient smooth functions.

        \subsubsection{\bf Definitions on an Infinite Interval}\label{sec-2.1.2} 
            The fractional integrals over unbounded intervals are defined in the same way; here we only consider the whole real line case, that is,  $(a,b)=(-\infty, \infty)$. 
            There are two different definitions of fractional order derivatives in the infinite interval case which were proved to be equivalent. The first three definitions are direct generalizations of Definitions \ref{def2.1}--\ref{def2.3}.

            \begin{definition}[cf. \cite{Samko}] \label{def2.4} 
                Let $\sigma > 0$ and $f:\R \rightarrow \R$. The $\sigma$ order left Liouville fractional integral of $f$ is defined by
                \begin{align*}
                    {}{I}{^{\sigma}_{x}} f(x) : =   \dfrac{1}{\Gamma(\sigma)} \int_{-\infty}^{x} \dfrac{f(y)}{(x - y)^{1 - \sigma}} \, dy \qquad \forall x \in \R
                \end{align*}
                and the $\sigma$ order right Liouville fractional integral of $f$ is defined by 
                \begin{align*}
                    {_{x}}{I}{^{\sigma}} f(x) : =  \dfrac{1}{\Gamma(\sigma)} \int_{x}^{\infty} \dfrac{f(y)}{(y-x)^{1 - \sigma}}\, dy \qquad \forall x \in \R.
                \end{align*}
            \end{definition}


            \begin{definition}[cf. \cite{Samko}] \label{def2.5} 
                Let $n-1 < \alpha < n$ and $f:\R \rightarrow \R$. The $\alpha$ order left Liouville fractional derivative of $f$ is defined by 
                \begin{align*}
                    {}{{D}}{_{x}^{\alpha}} f(x):= \dfrac{1}{\Gamma(n - \alpha)} \dfrac{d^{n}}{dx^n}\int_{-\infty}^{x} \dfrac{f(y)}{ (x - y)^{1+\alpha -n}} \,dy \qquad \forall x \in \R
                \end{align*}
                and the $\alpha$ order right Liouville fractional derivative of $f$ is defined by
                \begin{align*}
                    {_{x}}{{D}}{^{\alpha}}f(x) :=  \dfrac{(-1)^{n}}{\Gamma(n - \alpha)} \dfrac{d^n}{dx^n} \int_{x}^{\infty} \dfrac{f(y)}{(y - x)^{1+ \alpha -n}} \,dy \forall x \in \R.
                \end{align*}
            \end{definition}

            \begin{definition}[cf. \cite{Samko}] \label{def2.6} 
                Let $n-1 <\alpha< n $ and $f :\R \rightarrow \R$. The $\alpha$ order left Caputo fractional derivative of $f$ is defined by 
                \begin{align*}
                    {^{C}}{{D}}{_{x}^{\alpha}} f(x) &:= \dfrac{1}{\Gamma(n- \alpha)} \int_{- \infty}^{x} \dfrac{f^{(n)}(y)}{(x- y)^{1+ \alpha -n} } \,dy \qquad \forall x \in \R
                \end{align*}
                and the $\alpha$ order right Caputo fractional derivative of $f$ is defined by
                \begin{align*}
                    {^{C}_{x}}{{D}}{^{\alpha}} f(x) &:= \dfrac{(-1)^n}{\Gamma(n - \alpha)}  \int_{x}^{\infty} \dfrac{f^{(n)}(y)}{ (y - x)^{1+\alpha -n} } \, dy \qquad \forall x \in \R.
                \end{align*}
            \end{definition}

            It should also be noted that all integrals over the infinite domain are understood as standard improper integrals.

           	Similar to the finite interval case, we also can define the Gr\"unwald-Letnikov fractional derivatives for functions defined on the whole real line. In this case, notice that the sums are infinite sums in the above definition. 
 
            \begin{definition}[cf. \cite{Samko}] 
                Let $0 < \alpha <1$ and $f : \R \rightarrow \R$. The left Gr\"unwald-Letnikov fractional derivative of $f$ is defined by 
                \begin{align*}
                   {^{GL}}{D}{^{\alpha}_{x}} f(x) := \lim_{h \rightarrow 0^+} \dfrac{1}{h^{\alpha}} \sum_{k = 0}^{\infty} \dfrac{(-1)^{k}\Gamma(1+\alpha)}{\Gamma(k+1) \Gamma(\alpha - k +1)} f(x- kh) \qquad \forall x \in \R
               \end{align*}
               and the right Gr\"unwald-Letnikov fractional derivative of $f$ is defined by
               \begin{align*}
                    {^{GL}_{x}}{D}{^{\alpha}} f(x) := \lim_{h \rightarrow 0^+} \dfrac{1}{h^{\alpha}} \sum_{k = 0}^{\infty} \dfrac{(-1)^{k+1}\Gamma(1+\alpha)}{\Gamma(k+1) \Gamma(\alpha - k +1)} f(x+ kh) \qquad \forall x \in \R.
                \end{align*}
            \end{definition}

            Next, we recall another definition of fractional derivatives that are based on the Fourier transforms.  

            \begin{definition}[cf. \cite{Samko}] \label{def2.7}
	            Let $\alpha > 0$ and $f: \R \rightarrow \R$. The $\alpha$ order Fourier fractional derivative is defined by
	            \begin{align*}
	                {^{\mathcal{F}}}{{D}}{^{\alpha}}f(x) &:= \mathcal{F}^{-1} \left[ (i \xi)^{\alpha} \mathcal{F} [f](\xi)\right](x) \qquad \forall x \in \R
	            \end{align*}
                where $\mathcal{F}[\cdot]$ and $\mathcal{F}^{-1}[\cdot]$ denote respectively the Fourier transform and its inverse transform which are defined as follows: for any $x,\xi\in \R$
                \begin{align*}
                    \mathcal{F}[f](\xi) : = \int_{\R} e^{-i \xi x}f(x)\,dx,  \qquad \mathcal{F}^{-1}[f](x) := \int_{\R} e^{i\xi x}f(\xi)\,d\xi .
                \end{align*}
            \end{definition}

            \begin{remark}
	            The above Fourier fractional order derivative notion is based on the following well-known property of the Fourier transform:
                \[
                    \mathcal{F}[f^{(n)}](\xi)= (i \xi)^n \mathcal{F}[f](\xi), \qquad f^{(n)}(x)= \mathcal{F}^{-1}[ (i \xi)^n \mathcal{F}[f] ] (x)
                \]
                for any positive integer $n$.
            \end{remark}

    \subsection{Action on Smooth Functions with Compact Support}\label{sec-2.7}
        The action of the Riemann-Liouville integral and differential operators 
        on smooth functions with compact support is of special interest for our study 
        in this paper. The need for understanding these behaviors will become evident 
        in the later sections. 

        We now have a closer look at the support and the tail behavior of ${^{\pm}}{D}{^{\alpha}} \varphi$ for $\varphi \in C^{\infty}_{0}(\R)$ so 
        that $\mbox{supp}(\varphi)\subset (a,b)$. To that end, a direct computation yields 
        \begin{align}\label{eq2.20a}
            {^{-}}{{I}}{^{\sigma}_{x}} \varphi (x) = 
                \begin{cases} 
                    0 &\text{if } x \in (-\infty , a ),\\
                    {_{a}}{{I}}{^{\sigma}_{x}} \varphi(x) & \text{if } x \in [a,b],\\
                    L(x) & \text{if } x \in (b ,\infty), 
                \end{cases}
        \end{align}
        where
        \begin{align}\label{2.20b}
            L(x) = \dfrac{1}{\Gamma( \sigma)} \int_{a}^{b} \dfrac{\varphi(y)}{(x -y)^{1 - \sigma}} \, dy.
        \end{align}
        Taking the first derivative and letting $ \sigma=1-\alpha$ yields
        \begin{align} \label{LAction}
            {}{{D}}{^{\alpha}_{x}} \varphi(x) = 
                \begin{cases} 
                    0 &\text{if } x \in ( -\infty , a),\\
                    {_{a}}{{D}}{^{\alpha}_{x}} \varphi(x) & \text{if } x \in  [a,b],\\
                    L'(x) &\text{if } x \in  (b, \infty),
                \end{cases}
        \end{align}
   A similar formula can be shown for the right direction. The pollution function 
  will be denoted by $R(x)$ corresponding to $L(x)$.
          
        \begin{proposition} \label{Pollution}
            If $\varphi \in C^{\infty}_{0} (\R)$ with $\supp(\varphi) \subset (a,b)$, then $\supp({}{{D}}{^{\alpha}_{x}} \varphi)\subset (a, \infty)$ and $\supp({_{x}}{{D}}{^{\alpha}} \varphi)\subset (-\infty, b)$. 
        \end{proposition}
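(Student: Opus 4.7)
The plan is to deduce both containments directly from the case-by-case formula \eqref{LAction} and its mirror image for the right derivative, using the strict compact containment of $\mathrm{supp}(\varphi)$ inside $(a,b)$.

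Set $x_0 := \inf \mathrm{supp}(\varphi)$ and $x_1 := \sup \mathrm{supp}(\varphi)$. Since $\mathrm{supp}(\varphi)$ is a compact subset of the open interval $(a,b)$, I have $a < x_0 \le x_1 < b$. Because $\varphi(y) = 0$ for all $y < x_0$, direct inspection of \eqref{left_RL_derivative} shows that
\[
{}_{a} I^{1-\alpha}_{x} \varphi(x) = \dfrac{1}{\Gamma(1-\alpha)} \int_{a}^{x} \dfrac{\varphi(y)}{(x-y)^{\alpha}}\,dy
\]
vanishes on $(-\infty, x_0]$, and hence so does its classical derivative on the open set $(-\infty, x_0)$. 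Combining this with \eqref{LAction} forces ${}D^{\alpha}_{x} \varphi \equiv 0$ on $(-\infty, x_0)$, and taking closures then yields
\[
\mathrm{supp}({}D^{\alpha}_{x} \varphi) \subset [x_0, \infty) \subset (a, \infty),
\]
where the final inclusion uses $x_0 > a$. The obvious generalization (replace the first derivative by the $n$-th derivative when $n-1 < \alpha < n$) handles higher orders uniformly.

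The right-derivative statement follows by a symmetric argument: the reflection $y \mapsto a+b-y$ interchanges the left and right Riemann-Liouville derivatives, which yields an analogue of \eqref{LAction} in which the pollution tail now lies on $(-\infty,a)$ and ${}_{x} D^{\alpha} \varphi$ vanishes on $(x_1, \infty)$. Hence $\mathrm{supp}({}_{x} D^{\alpha} \varphi) \subset (-\infty, x_1] \subset (-\infty, b)$.

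The argument is essentially bookkeeping around the formula already derived in \eqref{LAction} and presents no real obstacle. The one point worth attention is that the strict inequalities $a < x_0$ and $x_1 < b$ (a consequence of the compactness of $\mathrm{supp}(\varphi)$ inside the open interval $(a,b)$, not merely its containment in $[a,b]$) are precisely what upgrade the closed-endpoint containments $[a,\infty)$ and $(-\infty,b]$ given immediately by \eqref{LAction} to the open-ended containments required by the statement.
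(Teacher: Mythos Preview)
Your proof is correct and follows the same route as the paper, which simply reads the proposition off the case-by-case formula \eqref{LAction} (and its right-hand analogue) without a separate proof. Your explicit tracking of $x_0=\inf\mathrm{supp}(\varphi)>a$ and $x_1=\sup\mathrm{supp}(\varphi)<b$ to pass from the closed half-lines $[a,\infty)$, $(-\infty,b]$ to the open ones $(a,\infty)$, $(-\infty,b)$ is a worthwhile clarification that the paper leaves implicit.
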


        \begin{remark}
    	    (a) Riemann-Liouville fractional differential (and integral) operators have a pollution effect on the support when acting on functions in $C^\infty_0 (\Ome)$. Left derivatives pollute the support to the right and right derivatives pollute the support to the left. 
    	    This pollution effect is a consequence of the nonlocal characteristics of fractional order differential and integral operators; in particular, the ``memory" effect.  
    	 
    	    (b) When $x\to \pm \infty$, the integrands in $L'(x)$ and $R'(x)$  
    	    are shrinking. Moreover, $\lim_{x \rightarrow \infty} |L'(x)| = 0$ (and $\lim_{x \rightarrow -\infty} |R'(x)| = 0$). 
        \end{remark}
       
        The next theorem states an integrability property of $D^\alpha \varphi$ for $\varphi \in C^{\infty}_{0} (\R)$. 
        
        \begin{theorem}
           Let $0<\alpha< 1$.  If $\varphi \in C^{\infty}_{0}(\Omega)$, then ${^{\pm}}{D}{^{\alpha}} \varphi \in L^{p}(\Omega)$ for each $1 \leq p \leq \infty$.
        \end{theorem}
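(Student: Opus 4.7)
The plan is to exploit the piecewise representation (\ref{LAction}) of ${}D^\alpha_x\varphi$ (and the mirror formula for ${_x}D^\alpha\varphi$ involving the pollution function $R$) to reduce the claim to bounding three independent pieces: the zero piece on $(-\infty,a)$, the interior Riemann--Liouville piece ${_a}D^\alpha_x\varphi$ on $[a,b]$, and the tail piece $L'(x)$ on $(b,\infty)$. Since $\mathrm{supp}(\varphi)$ is compactly contained in $(a,b)$, I fix $\eps>0$ with $\mathrm{supp}(\varphi)\subset[a+\eps,b-\eps]$; this gap will be the crucial quantitative input. The zero piece is trivial, so the work reduces to the interior and tail.

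For the interior piece, I would use $\varphi(a)=0$ together with the weak Caputo identity (\ref{weak_RL_Caputo_l}) (or, equivalently, an integration by parts after the change of variables $s=x-y$) to rewrite
\begin{align*}
    {_a}D^\alpha_x\varphi(x) = \frac{1}{\Gamma(1-\alpha)}\int_0^{x-a} s^{-\alpha}\varphi'(x-s)\,ds,\qquad x\in[a,b].
\end{align*}
Because $\varphi'$ is bounded and $s^{-\alpha}$ is integrable on $(0,b-a)$ (as $0<\alpha<1$), the right side is uniformly bounded on $[a,b]$, hence lies in $L^p([a,b])$ for every $p\in[1,\infty]$.

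For the tail piece, differentiating (\ref{2.20b}) with $\sigma=1-\alpha$ gives
\begin{align*}
    L'(x) = -\frac{\alpha}{\Gamma(1-\alpha)}\int_{a+\eps}^{b-\eps} \frac{\varphi(y)}{(x-y)^{\alpha+1}}\,dy,\qquad x>b,
\end{align*}
where the integration range has been trimmed using $\mathrm{supp}(\varphi)\subset[a+\eps,b-\eps]$. For such $y$ and $x>b$, we have $(x-y)^{\alpha+1}\ge(x-b+\eps)^{\alpha+1}$, whence $|L'(x)|\le C(x-b+\eps)^{-(\alpha+1)}$. This is bounded on all of $(b,\infty)$, giving the $L^\infty$ bound, and decays like $x^{-(\alpha+1)}$ as $x\to\infty$; since $p(\alpha+1)>1$ for every $p\ge1$, it follows that $L'\in L^p(b,\infty)$ for all such $p$. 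Assembling the three pieces yields ${}D^\alpha_x\varphi\in L^p(\R)$, and ${_x}D^\alpha\varphi$ is handled by a symmetric argument with $R'$ on $(-\infty,a)$. The case $\Omega=(a,b)$ then follows by restriction. The main obstacle is the tail estimate near $x=b^+$: without the strict containment $\mathrm{supp}(\varphi)\subset(a,b)$ (equivalently, without the gap $\eps$), $L'$ would be unbounded as $x\to b^+$ and the $L^\infty$ conclusion would fail, so the role of ``compact support inside the open interval'' must be used essentially at this step.
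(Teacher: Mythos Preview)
Your proof is correct and follows essentially the same approach as the paper, which simply states that the result follows from direct calculations (deferring the details to \cite{Feng_Sutton}). Your explicit splitting into the zero, interior, and tail pieces via \eqref{LAction}, together with the Caputo rewriting of ${_a}D^\alpha_x\varphi$ on $[a,b]$ and the decay estimate $|L'(x)|\le C(x-b+\eps)^{-(1+\alpha)}$ on $(b,\infty)$, is precisely the kind of direct computation the paper has in mind.
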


        \begin{proof} 
        The proof comes from direct calculations that can be found in \cite{Feng_Sutton}.
        \end{proof}

    \begin{remark}
    	A special class of compactly supported smooth functions are those obtained through 
    	a mollification process (i.e., through a convolution with a 
    	 compactly supported mollifier). We refer the reader to 
    	 \cite{Feng_Sutton} for a detailed discussion. 
    \end{remark}

\subsection{Fundamental Theorem of Classical Fractional Calculus (FTcFC)}\label{sec-3.1}
    In this subsection we present an alternative understanding of 
	the classical fractional order integrals and derivatives. That is to interpret 
	the fractional differentiation as a by-product of the fractional integration 
	through the so-called {\em  Fundamental Theorem of Fractional Calculus}. 
	This new interpretation will play an important role in the development of 
	our weak fractional calculus theory to be given in the next section.
 
\subsubsection{\bf FTcFC on Finite Intervals $(a,b)\subset \R$}\label{sec-3.1.1}
We begin this subsection by recalling the following properties of the fractional operators 
${^{\pm}}{I}{^{\alpha} }$ and $  {^{\pm}}{D}{^{\alpha} }$.
        
    \begin{lemma} [cf. \cite{Samko}]\label{lemma3.1}
        Let $0 < \alpha <1$. The  following properties hold:
        \begin{itemize}
        \item[(a)]  ${_{a}}{D}{^{\alpha}_{x}} \kappa^{\alpha}_{-}(x) \equiv 0$ and ${_{x}}{D}{^{\alpha}_{b}} \kappa^{\alpha}_{+}(x) \equiv 0$ where
        \begin{align}\label{kernel_functions}
            \kappa^{\alpha}_{-}(x) := (x-a)^{\alpha -1}, \qquad \kappa^{\alpha}_{+}(x) := (b-x)^{\alpha-1} .
        \end{align}
            \item[(b)] ${_{a}}{D}{^{\alpha}_{x}}{_{a}}{I}{^{\alpha}_{x}} f(x) = f(x)$ and ${_{x}}{D}{^{\alpha}_{b}}{_{x}}{I}{^{\alpha}_{b}}f(x) = f(x)$ for any $f \in L^{1}_{loc}((a,b))$.
            \item[(c)] If ${_{a}}{I}{^{1 - \alpha}_{x}}f \in AC([a,b])$, then 
            \begin{equation}\label{FTFC_1}
            f(x) = c^{1-\alpha}_{-} \kappa^{\alpha}_{-}(x) + {_{a}}{I}{^{\alpha}_{x}}{_{a}}{D}{^{\alpha}_{x}}f(x),
            \end{equation} 
            and if ${_{x}}{I}{^{1 - \alpha}_{b}}f \in AC([a,b])$, then 
            \begin{equation}\label{FTFC_2}
            f(x) = c^{1-\alpha}_{+} \kappa^{\alpha}_{+}(x) + {_{x}}{I}{^{\alpha}_{b}} {_{x}}{D}{^{\alpha}_{b}}f(x),
            \end{equation}
            where 
           \begin{align} \label{FTFC_3}
            c_{-}^{\sigma}  := \frac{  {_{a}}{I}{^{\sigma}_{x}} f(a) }{\Gamma(\sigma)},\qquad
            c_{+}^{\sigma} := \frac{   {_{x}}{I}{^{\sigma}_{b}} f(b) }{\Gamma(\sigma)}. 
            \end{align}
        \end{itemize}
    
    \end{lemma}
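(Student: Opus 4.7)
The plan is to prove the three parts in sequence, each reducing to a direct manipulation of the defining integrals plus one standard real-analysis tool.

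For (a), I would compute ${_{a}}I^{1-\alpha}_x \kappa^\alpha_-(x)$ by the substitution $y = a + (x-a)t$, which reduces the integral to the Beta integral $B(\alpha,1-\alpha) = \Gamma(\alpha)\Gamma(1-\alpha)$. Hence ${_{a}}I^{1-\alpha}_x \kappa^\alpha_-(x) = \Gamma(\alpha)$ is a constant, so ${_{a}}D^\alpha_x \kappa^\alpha_-(x) \equiv 0$ upon differentiation. The identity for $\kappa^\alpha_+$ follows from the symmetric substitution $y = b - (b-x)t$ applied to ${_{x}}I^{1-\alpha}_b$.

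For (b), the key tool is the semigroup property ${_{a}}I^\sigma_x {_{a}}I^\tau_x f = {_{a}}I^{\sigma+\tau}_x f$, valid for $f \in L^1_{loc}((a,b))$ by Fubini together with another Beta integral. Choosing $\sigma = 1-\alpha$ and $\tau = \alpha$ gives ${_{a}}I^{1-\alpha}_x {_{a}}I^\alpha_x f = {_{a}}I^1_x f = \int_a^x f(y)\,dy$, whose derivative recovers $f$ almost everywhere by the Lebesgue differentiation theorem. The right-sided identity is entirely analogous, and both match the Definition \ref{def2.2} of ${_{a}}D^\alpha_x$ and ${_{x}}D^\alpha_b$.

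For (c), I would set $g(x) := {_{a}}I^{1-\alpha}_x f(x)$; the hypothesis $g \in AC([a,b])$ then yields $g(x) = g(a) + \int_a^x g'(y)\,dy$. Applying ${_{a}}I^\alpha_x$ to both sides and using the semigroup property from (b), the left side becomes ${_{a}}I^\alpha_x g = {_{a}}I^1_x f = \int_a^x f(y)\,dy$, while the right side becomes $g(a)(x-a)^\alpha/\Gamma(\alpha+1) + {_{a}}I^{1+\alpha}_x g'(x)$. Differentiating in $x$, the first term produces $g(a)(x-a)^{\alpha-1}/\Gamma(\alpha)$, and the second produces ${_{a}}I^\alpha_x g'(x) = {_{a}}I^\alpha_x {_{a}}D^\alpha_x f(x)$. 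Recalling $g(a) = {_{a}}I^{1-\alpha}_x f(a)$ and the definition of $c^{1-\alpha}_-$, this delivers \eqref{FTFC_1}; the right-sided identity \eqref{FTFC_2} follows from the symmetric argument using the right Riemann--Liouville integral.

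The main obstacle will be the regularity bookkeeping in (c). One must justify commuting $\frac{d}{dx}$ with ${_{a}}I^{1+\alpha}_x$ and ensure that $g'(x) = {_{a}}D^\alpha_x f(x)$ almost everywhere; both follow from the AC hypothesis on $g$, but must be invoked carefully when $f$ is only $L^1_{loc}$. The remaining ingredients (Beta function evaluations, Fubini, and Lebesgue differentiation) are standard, so the proof proceeds smoothly once this regularity is nailed down.
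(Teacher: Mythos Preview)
Your outline is correct and is precisely the standard argument found in Samko (the reference the paper cites); the paper itself does not supply a proof for this lemma, so there is nothing to compare against beyond noting that your Beta-integral computation for (a), semigroup-plus-Lebesgue-differentiation argument for (b), and AC-representation argument for (c) are exactly the classical route.

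One minor point worth flagging: when you carry out the differentiation in (c), the constant you obtain is $g(a)/\Gamma(\alpha) = {_{a}}I^{1-\alpha}_{x} f(a)/\Gamma(\alpha)$, whereas the paper's definition \eqref{FTFC_3} reads $c_{-}^{1-\alpha} = {_{a}}I^{1-\alpha}_{x} f(a)/\Gamma(1-\alpha)$. This is a discrepancy in the paper's stated constant (the standard Samko formula has $\Gamma(\alpha)$ in the denominator), not an error in your argument; just be aware of it so you do not second-guess your computation.
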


On noting the fact that ${^{\pm}}{D}{^{\alpha}}F(x) = f(x)$ implies that ${^{\pm}}{I}{^{1-\alpha}}F \in AC([a,b])$, then the above lemma immediately infers the 
following theorem. 

    \begin{theorem} \label{FTFC} 
        Let $0<\alpha <1$, $f, F \in L^{1}((a,b))$. 
        Then ${^{\pm}}{D}{^{\alpha}}F(x) = f(x)$  on $(a,b)$ if and only if 
        \begin{equation}\label{FTFC_4} 
        F(x) = c_{\pm}^{1-\alpha} \kappa^{\alpha}_{\pm}(x) + {^{\pm}}{I}{^{\alpha}}f(x).
        \end{equation}
  
    \end{theorem}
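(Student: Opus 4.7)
The theorem asserts an equivalence, so I would handle the two implications separately and present only the left case in detail, since the right case follows by an entirely symmetric argument (replacing derivatives with their right counterparts and picking up the sign coming from \eqref{right_RL_derivative}).

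For the direction $(\Leftarrow)$, I would simply apply ${_{a}}{D}{^{\alpha}_{x}}$ to the given representation. Linearity, together with Lemma \ref{lemma3.1}(a), annihilates the kernel term $c_{-}^{1-\alpha} \kappa^{\alpha}_{-}$, while Lemma \ref{lemma3.1}(b) guarantees that ${_{a}}{D}{^{\alpha}_{x}}{_{a}}{I}{^{\alpha}_{x}} f = f$. Combined, this gives ${_{a}}{D}{^{\alpha}_{x}} F = f$ on $(a,b)$. This half is essentially immediate.

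For the direction $(\Rightarrow)$, the strategy is to reduce the hypothesis ${_{a}}{D}{^{\alpha}_{x}} F = f$ to the absolute continuity hypothesis required by Lemma \ref{lemma3.1}(c), after which the conclusion is just \eqref{FTFC_1} rewritten with $f$ in place of ${_{a}}{D}{^{\alpha}_{x}} F$. By definition \eqref{left_RL_derivative}, ${_{a}}{D}{^{\alpha}_{x}} F = \tfrac{d}{dx}\bigl[{_{a}}{I}{^{1-\alpha}_{x}} F\bigr]$, so the hypothesis says that the function $G(x) := {_{a}}{I}{^{1-\alpha}_{x}} F(x)$ has derivative $f \in L^{1}((a,b))$. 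Setting $H(x) := \int_{a}^{x} f(y)\,dy$, which plainly lies in $AC([a,b])$, the functions $G$ and $H$ differ by a constant, so $G \in AC([a,b])$. Lemma \ref{lemma3.1}(c) then applies and delivers exactly the representation \eqref{FTFC_4} with the constant $c_{-}^{1-\alpha}$ given by \eqref{FTFC_3}.

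The one step that is not entirely mechanical is justifying $G \in AC([a,b])$, i.e.\ showing that the pointwise identity $G(x) = G(a^{+}) + \int_{a}^{x} f(y)\,dy$ holds (and that $G(a^{+})$ is finite), as opposed to merely having a classical derivative equal to $f$ almost everywhere. For $F \in L^{1}((a,b))$ the function ${_{a}}{I}{^{1-\alpha}_{x}} F$ is itself in $L^{1}((a,b))$ with the expected one-sided limit at $a$, so the identification of $G$ with an indefinite integral up to a constant is a standard property of Riemann--Liouville integrals of $L^{1}$ data, and this is the only point where care is required. Once it is in hand, the theorem is a direct restatement of Lemma \ref{lemma3.1}, and the whole proof fits in a few lines.
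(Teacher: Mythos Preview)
Your proposal is correct and follows essentially the same route as the paper: the paper's proof is a single sentence noting that ${^{\pm}}{D}{^{\alpha}}F = f$ implies ${^{\pm}}{I}{^{1-\alpha}}F \in AC([a,b])$ and then invokes Lemma~\ref{lemma3.1} directly. You have simply unpacked both implications in more detail, and you correctly flag the one nontrivial point (absolute continuity of $G = {^{\pm}}{I}{^{1-\alpha}}F$), which the paper also takes for granted.
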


\begin{remark}     
 The analogue of Theorem \ref{FTFC} in the integer order calculus is the well-known 
 {\em Fundamental Theorem of Calculus (or Newton-Leibniz Theorem)} which says that $F'(x):=\frac{dF}{dx}(x)=f(x)$ if 
 and only if 
 \[
 F(x)= F(a) + \int_a^x f(y)\, dy =  F(a) + \int_a^b H(x-y) f(y)\, dy\qquad \forall x\in [a,b],
 \]
 where the kernel function $\kappa(x,y)= H(x-y)$, the Heaviside function. Since 
 the kernel space of the derivative $\frac{d}{dx}$ operator is $\R$, this is why the first 
 term  on the right-hand side must be a constant, because it must belong to the kernel 
 space of $\frac{d}{dx}$. 
 Due to the above analogue, we shall call Theorem \ref{FTFC} {\em Fundamental Theorem of 
 	Classical Fractional Calculus} on finite intervals in the rest of this paper.
\end{remark}
    
In fact, given integral operators ${^{\pm}}{I}{^{\alpha}}$,   \eqref{FTFC_4} 
can be used to define the corresponding Riemann-Liouville derivatives as follows.

  \begin{definition} \label{FTFC_def} 
	Let $0<\alpha <1$ and $f , F\in L^{1}((a,b))$. Then $f$ is called  the $\alpha$ order left/right  
	Riemann-Liouville fractional derivative of $F$, and write ${^{\pm}}{D}{^{\alpha}}F(x) = f(x)$, 
	if \eqref{FTFC_4} holds,
	\begin{equation} \label{FTFC_5} 
	F(x) = c_{\pm}^{1- \alpha} \kappa^{\pm}(x) + {^{\pm}}{I}{^{\alpha}}f(x).
	\end{equation}

\end{definition} 

It is easy to check that the $\alpha$ order fractional derivative of $f$, if it exists, 
is uniquely defined.  In light of Theorem \ref{FTFC},  we see that the original definition 
and the above definition are equivalent. 
In this paper we emphasize the above FTFC approach of using a given integral 
operator (i.e.,  its kernel function is given) to define the corresponding derivative 
notion by the FTFC identity.  There are many benefits/advantages of this approach. 
It is systematic (not ad hoc) and quite general, because it is done in the same way for 
any given integral operator (see the definition below). The FTFC is built into the 
definition; we regard that having such a FTFC is essential for any fractional calculus 
theory.  

We now give the alluded definition of fractional derivatives for general kernels (and their associated integral operators). 

\begin{definition}
Given any kernel function $\tau\in L^1((a,b)\times (a,b))$, let 
$I_\tau$ denote the subordinate (Riemann or Lebesgue) integral operator, namely,
\begin{align}
I_{\tau} f(x):= \int_{a}^b \tau(x,y) f(y)\, dy    \qquad \forall x\in [a,b].
\end{align}
Let $f, F\in L^1(\Ome)$, then $f$ is called the fractional/nonlocal derivative of $F$, and written
$D_{\tau} F = f$, there exists some $c\in [a,b]$ such that 
       \begin{align} \label{general_def_a}
           F(x) &=   C_{F,c} \tau(x,c) +  I_{\tau}  f(x) \qquad \forall x\in [a,b] 
       \end{align}
       for some constant $C_{F,c}$ depending on both $F$ and $c$.
\end{definition}

\subsubsection{\bf FTcFC on the Infinite Interval $\R$}\label{sec-3.1.2}
The case for a FTcFC on the entire line is quite different, but simpler 
because of the decay properties of kernel functions $\kappa^{\alpha}_{\pm}$ when $|x|\to \infty$. Similarly, we start by recalling the following properties 
of the fractional operators ${^{\pm}}{I}{^{\alpha} }$ and ${^{\pm}}{D}{^{\alpha} }$.  

        \begin{lemma}[cf. \cite{Samko}] \label{lem3.4}
        	Let $0 < \alpha < 1$. The following properties hold:
        \begin{itemize}
            \item[(a)] $D^{\alpha}_{x} I^{\alpha}_{x} f(x) = f(x)$ and ${_{x}}{D}{^{\alpha}} {_{x}}{I}{^{\alpha}}f(x) = f(x)$ for any $f \in L^{1}(\R)$,
            \item[(b)] $I^{\alpha}_{x} D^{\alpha}_{x} f(x) = f(x)$ and ${_{x}}{I}{^{\alpha}} {_{x}}{D}{^{\alpha}} f(x) =  f(x)$ for any $I^{1-\alpha}f \in AC(\R)$ so that $f(x) \rightarrow 0$ as $|x| \rightarrow \infty$.
        \end{itemize} 
    \end{lemma}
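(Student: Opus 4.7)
Since the two formulas in each part are mirror images of one another (via the reflection $x \mapsto -x$), I will only sketch the ``left'' identities; the ``right'' ones follow by the same argument after reversing orientation. The backbone of the whole lemma is two ingredients: (i) the semigroup property of the Liouville integrals on the line,
\begin{equation}\label{semigroup_R}
I^{\sigma}_{x} I^{\tau}_{x} f(x) = I^{\sigma+\tau}_{x} f(x), \qquad \sigma,\tau>0,
\end{equation}
valid whenever the iterated integrals converge, and (ii) the identification $I^{1}_{x} f(x) = \int_{-\infty}^{x} f(y)\,dy$, so that $\frac{d}{dx} I^{1}_{x} f = f$ in the a.e./distributional sense for $f\in L^1(\R)$.

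For part (a), the plan is to unfold the definition of $D^{\alpha}_{x}$ as $\frac{d}{dx} I^{1-\alpha}_{x}$, apply the semigroup property \eqref{semigroup_R} to collapse $I^{1-\alpha}_{x}I^{\alpha}_{x}$ into $I^{1}_{x}$, and then use the fundamental theorem of (integer order) calculus:
\begin{align*}
D^{\alpha}_{x} I^{\alpha}_{x} f(x) = \frac{d}{dx} I^{1-\alpha}_{x} I^{\alpha}_{x} f(x) = \frac{d}{dx} I^{1}_{x} f(x) = f(x).
\end{align*}
To justify \eqref{semigroup_R} for $f\in L^{1}(\R)$ with $\sigma=1-\alpha$, $\tau=\alpha$, I would apply Fubini and the beta-function identity $\int_{y}^{x}(x-z)^{-\alpha}(z-y)^{\alpha-1}\,dz = \Gamma(1-\alpha)\Gamma(\alpha)$, as in the standard argument on $(a,b)$; the $L^{1}(\R)$ hypothesis on $f$ is precisely what is needed to ensure absolute convergence of the double integral.

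For part (b), the strategy is symmetric: write $D^{\alpha}_{x} f = \frac{d}{dx} I^{1-\alpha}_{x} f$ and set $F := I^{1-\alpha}_{x} f$, so the claim becomes $I^{\alpha}_{x} F' = f$. The hypothesis $I^{1-\alpha} f\in AC(\R)$ tells us $F' $ exists a.e.\ and that $F$ recovers itself by integrating $F'$ over half-lines, while the decay $f(x)\to 0$ as $|x|\to\infty$ (together with $F\in AC(\R)$) forces $F(-\infty)=0$, which is the essential boundary condition needed when pushing $d/dx$ through the Liouville integral. I would then establish the commutation
\begin{equation}\label{commute_R}
I^{\alpha}_{x} F'(x) = \frac{d}{dx}\,I^{\alpha}_{x} F(x),
\end{equation}
by approximating $F'$ by integrable, compactly supported pieces, applying \eqref{semigroup_R} to the smoothed versions, and passing to the limit using $F(-\infty)=0$ to kill the boundary contribution that would otherwise arise from an integration by parts in the Liouville kernel. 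Once \eqref{commute_R} is in hand, the semigroup property closes the argument:
\begin{align*}
I^{\alpha}_{x} D^{\alpha}_{x} f(x) = I^{\alpha}_{x} F'(x) = \frac{d}{dx} I^{\alpha}_{x} I^{1-\alpha}_{x} f(x) = \frac{d}{dx} I^{1}_{x} f(x) = f(x).
\end{align*}

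The main obstacle is justifying \eqref{commute_R} rigorously, because the kernel $(x-y)^{\alpha-1}/\Gamma(\alpha)$ is singular at $y=x$ and the domain is unbounded, so a naive differentiation under the integral sign or integration by parts both produce formally divergent boundary or kernel terms. This is precisely where the hypotheses ``$I^{1-\alpha}f\in AC(\R)$'' and ``$f(x)\to 0$ as $|x|\to\infty$'' earn their keep: the first gives a.e.\ differentiability and recovery of $F$ from $F'$, while the second provides the decay at $-\infty$ needed for the boundary term in a truncated integration by parts to vanish in the limit. I expect this is the only technically delicate step, while everything else reduces to Fubini, the beta-function identity, and the classical fundamental theorem of calculus.
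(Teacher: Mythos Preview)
The paper does not supply its own proof of this lemma; it is stated with the attribution ``[cf.\ \cite{Samko}]'' and quoted as a known result from the classical literature, so there is no in-paper argument to compare against. Your outline is essentially the standard textbook proof one finds in Samko et al.: for part (a), collapse $I^{1-\alpha}_{x}I^{\alpha}_{x}$ into $I^{1}_{x}$ via the semigroup property (Fubini plus the beta identity) and differentiate; for part (b), use absolute continuity of $F=I^{1-\alpha}_{x}f$ to write $F=I^{1}_{x}F'$ once the boundary value at $-\infty$ vanishes, then run the semigroup property in reverse.

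One step deserves slightly more care than you give it. The inference that ``$f(x)\to 0$ as $|x|\to\infty$ together with $F\in AC(\R)$ forces $F(-\infty)=0$'' is not automatic: pointwise decay of $f$ alone does not obviously control the tail of the singular convolution $F(x)=I^{1-\alpha}_{x}f(x)$ as $x\to -\infty$, and $AC(\R)$ by itself does not guarantee the existence of $\lim_{x\to-\infty}F(x)$. In practice one either strengthens the integrability of $f$ near $-\infty$ or argues directly from the kernel estimate, and the hypotheses as stated are enough to push this through with a short additional computation. This is not a fatal gap, and you have correctly flagged it as the technically delicate point; just be aware that the phrase ``forces $F(-\infty)=0$'' is currently an assertion rather than an argument.
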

    
    We then have
    
    \begin{theorem}\label{FTFCa}
        Let $0 < \alpha < 1$, and $f, F \in L^{1}(\R)$. If
        \begin{equation}\label{FTFC_1a}
               F(x)= {^{\pm}}{I}{^{\alpha}} f(x),
        \end{equation} 
        then ${^{\pm}}{D}{^{\alpha}} F(x) = f(x)$. The converse is also true if 
        $F(x)\to 0$ as $|x|\to \infty$ is required.
    \end{theorem}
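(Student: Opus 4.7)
The plan is to read this theorem as a direct bridge between Lemma \ref{lem3.4}(a) and \ref{lem3.4}(b), since those two identities are exactly the left and right inverse statements $D^\alpha I^\alpha = \mathrm{id}$ and $I^\alpha D^\alpha = \mathrm{id}$ on appropriate function classes. So my approach is simply to apply these two identities to the two directions separately, while carefully checking that the hypotheses of part (b) are satisfied under the decay assumption $F(x)\to 0$ as $|x|\to\infty$. Throughout, I will handle the two signs ${^{-}}{I}{^\alpha}, {^{+}}{I}{^\alpha}$ (resp.\ ${^{-}}{D}{^\alpha}, {^{+}}{D}{^\alpha}$) in parallel since the arguments are mirror-symmetric.

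For the forward direction, I assume $F = {^{\pm}}{I}{^{\alpha}} f$ with $f\in L^1(\R)$. I apply the operator ${^{\pm}}{D}{^{\alpha}}$ to both sides and invoke Lemma \ref{lem3.4}(a), which gives ${^{\pm}}{D}{^{\alpha}} F(x) = {^{\pm}}{D}{^{\alpha}}\,{^{\pm}}{I}{^{\alpha}} f(x) = f(x)$ almost everywhere. This direction requires no smoothness or decay hypothesis beyond $f\in L^1(\R)$ (and hence $F\in L^1(\R)$ follows from the mapping properties of ${^{\pm}}{I}{^{\alpha}}$ recorded earlier in Section \ref{sec-2.1.2}), so it is essentially immediate.

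For the converse, I assume ${^{\pm}}{D}{^{\alpha}} F = f$ with $F, f\in L^1(\R)$ and $F(x)\to 0$ as $|x|\to\infty$. My goal is to apply Lemma \ref{lem3.4}(b) so as to conclude $F(x) = {^{\pm}}{I}{^{\alpha}}\,{^{\pm}}{D}{^{\alpha}} F(x) = {^{\pm}}{I}{^{\alpha}} f(x)$, which is exactly \eqref{FTFC_1a}. The assumption $F(x)\to 0$ as $|x|\to \infty$ is one of the two hypotheses of Lemma \ref{lem3.4}(b) and is given. The remaining hypothesis is $I^{1-\alpha} F \in AC(\R)$ (and likewise ${_{x}}{I}{^{1-\alpha}} F \in AC(\R)$ for the right case); here I would argue that by Definition \ref{def2.5} with $n=1$, the very assumption ${^{\pm}}{D}{^{\alpha}} F = f$ means that $I^{1-\alpha}F$ has the $L^1$ classical derivative $f$. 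Hence $I^{1-\alpha}F$ is (locally) absolutely continuous, and combined with $F\in L^1(\R)$ and the decay of $F$ at infinity, this lifts to $AC(\R)$.

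The main obstacle is precisely the verification of the $AC(\R)$ hypothesis needed for Lemma \ref{lem3.4}(b). On a bounded interval, $\frac{d}{dx}(I^{1-\alpha}F) = f \in L^1$ is enough to give absolute continuity, but on the whole real line one must also exclude a nonzero constant at $\pm\infty$ in the antiderivative. The decay condition $F(x)\to 0$ as $|x|\to\infty$ is what forces this constant to vanish, because the kernel of $I^{1-\alpha}$ decays and $F\in L^1$ make $I^{1-\alpha}F(x)\to 0$ at infinity as well. This pinpoints why the converse requires the decay assumption while the forward direction does not, and it also explains how Theorem \ref{FTFCa} differs from its finite-interval counterpart Theorem \ref{FTFC}, where the kernel functions $\kappa^{\alpha}_{\pm}$ were needed to absorb the nontrivial kernel of ${^{\pm}}{D}{^{\alpha}}$.
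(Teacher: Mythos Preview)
Your proposal is correct and takes essentially the same approach as the paper: the theorem is stated immediately after Lemma \ref{lem3.4} with the preface ``We then have,'' so the paper's proof is exactly the direct application of parts (a) and (b) that you outline. One small quibble: in the forward direction you do not need to derive $F\in L^1(\R)$ from mapping properties of ${^{\pm}}{I}{^{\alpha}}$ (which are in fact delicate on the whole line), since $F\in L^1(\R)$ is already given as a hypothesis.
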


For the same reason as given in Subsection \ref{sec-3.1.1}, we shall call 
Theorem \ref{FTFCa} {\em the Fundamental Theorem of Classical Fractional Calculus on $\R$}
in the rest of this paper. 

Similarly, we also introduce the following definition.

  \begin{definition} \label{FTFC_defa} 
	Let $0<\alpha <1$ and $f , F\in L^{1}(\R)$. Then $f$ is called  the $\alpha$ order left/right  
	Riemann-Liouville fractional derivative of $F$ on $\R$, and write ${^{\pm}}{D}{^{\alpha}}F(x) 
	= f(x)$ (abusing the notation), if \eqref{FTFC_1a} holds.  
\end{definition} 

It is easy to show that ${^{\pm}}{D}{^{\alpha}}F$ is well defined and it coincides with 
the original definitions of Riemann-Liouville derivatives on $\R$. 
This FTcFC interpretation of fractional derivatives will be emphasized in this paper.

\section{A Weak Fractional Differential Calculus Theory}\label{sec-4}
We saw in the previous section that the classical fractional calculus 
theory has several difficulties arising from the change to non-integer order 
integration and differentiation. Unlike the well formulated and understood 
integer order calculus, the basic notion of fractional derivatives is 
domain-dependent and has several different (and nonequivalent) 
definitions; familiar calculus rules do not hold or become fairly complicated 
and restricted; fractionally differentiable functions are difficult to characterize;  
there is no local characterization of non-local fractional integral and 
derivative operators; more importantly, although the Riemann integration 
can be generalized to the Lebesgue integration in the definitions of all 
fractional integrals, unlike the integer order case, there is no weak 
fractional derivative concept/theory, 
which in turn has caused some difficulties and confusions for studying/interpreting 
fractional order differential equations.

The primary goal of this section (and this paper) is to develop a weak fractional 
differential calculus theory, which is parallel to the integer order weak derivative theory 
(cf. \cite{Adams, Brezis, Evans}). The anticipated weak fractional theory 
lays down the ground work for developing a new fractional Sobolev space theory
in a companion paper \cite{Feng_Sutton1a}. Together they will provide a solid theoretical 
foundation and  pave the way for a systematic and thorough study of initial value, 
boundary value and initial-boundary value problems for fractional order differential 
equations and fractional calculus of variations problems as well as 
their numerical solutions in the subsequent works \cite{Feng_Sutton2,Feng_Sutton3}.


In this section, unless it is stated otherwise, all integrals are understood in the Lebesgue sense. We use ${^{-}}{D}{^{\alpha}}$ and ${^{+}}{D}{^{\alpha}}$ to denote respectively any left and right $\alpha$ order classical derivative introduced in Section \ref{sec-2}. ${^{\pm}}{D}{^{\alpha}}$ denotes either ${^{-}}{D}{^{\alpha}}$ or ${^{+}}{D}{^{\alpha}}$. $\Omega$ denotes either a finite interval $(a,b)$ or the whole real line $\R$.  In the case $\Ome=(a,b)$, for any $\varphi \in C^{\infty}_{0}(\Omega)$, $\tilde{\varphi}$ is used to denote the zero extension of $\varphi$ to $\R$.

        
\subsection{Definitions of Weak Fractional Derivatives}\label{sec-4.1}
%
Like in the integer order case, the idea of defining {\em weak} fractional derivative ${^{\pm}}{ \mathcal{D}}{^{\alpha}} u$ of a function $u$ is to specify its action on any smooth compactly supported function $\varphi \in C^{\infty}_{0}(\Omega)$, 
instead of knowing its pointwise values as done in the classical fractional derivative definitions.

    \begin{definition}\label{RWFD}
        For $\alpha> 0$, let $[\alpha]$ denote the integer part of $\alpha$. For $u \in L^{1}(\Omega)$, 
       \begin{itemize} 
       \item[{\rm (i)}] a function $v \in L_{loc}^{1} (\Omega)$ is called the left weak fractional derivative of $u$ if 
        \begin{align*}
            \int_{\Omega} v(x) \varphi(x) \,dx = (-1)^{[\alpha]} \int_{\Omega} u(x) {^{+}}{D}{^{\alpha}} \tilde{\varphi}(x) \, dx
             \qquad \forall \varphi \in C_{0}^{\infty} (\Omega),
        \end{align*}
        we write ${^{-}}{ \mathcal{D}}{^{\alpha}} u:=v$; 
     \item[{\rm (ii)}] a function $w\in L_{loc}^{1} (\Omega)$ is called the right weak fractional derivative of $u$ if 
      \begin{align*}
       \int_{\Omega} w(x) \varphi(x) \,dx = (-1)^{[\alpha]} \int_{\Omega} u(x) {^{-}}{D}{}^{\alpha} \tilde{\varphi}(x) \,dx
       \qquad \forall \varphi \in C_{0}^{\infty} (\Omega), 
      \end{align*}
       and we write ${^{+}}{\mathcal{D}}{^{\alpha}} u:=w$. 
      \end{itemize}
   \end{definition}

The next proposition shows that weak fractional derivatives are well-defined. 

\begin{proposition}
	Let $u \in L^1 (\Omega)$. Then a weak fractional derivative of $u$, if it exists, is uniquely defined.
\end{proposition}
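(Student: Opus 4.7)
The plan is to argue exactly as in the integer-order weak derivative case, by reducing the uniqueness question to the classical fundamental lemma of the calculus of variations (du Bois-Reymond). The key observation is that although the right-hand side of the defining identity in Definition \ref{RWFD} involves a fractional operator acting on the test function, the left-hand side is a plain $L^1_{loc}$-pairing against an arbitrary $\varphi \in C_0^\infty(\Omega)$, so any two candidate derivatives produce the same linear functional on $C_0^\infty(\Omega)$.

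First I would fix $u\in L^1(\Omega)$ and suppose that $v_1, v_2 \in L^1_{loc}(\Omega)$ are both left weak fractional derivatives of $u$ of order $\alpha$ (the right case is identical). By Definition \ref{RWFD}, for every $\varphi \in C_0^\infty(\Omega)$ one has
\begin{equation*}
\int_\Omega v_i(x)\varphi(x)\,dx = (-1)^{[\alpha]}\int_\Omega u(x)\, {^{+}}{D}{^{\alpha}}\tilde\varphi(x)\,dx,\qquad i=1,2.
\end{equation*}
Subtracting these two identities eliminates the right-hand side entirely and yields
\begin{equation*}
\int_\Omega \bigl(v_1(x)-v_2(x)\bigr)\varphi(x)\,dx = 0 \qquad \forall\, \varphi\in C_0^\infty(\Omega).
\end{equation*}
Since $v_1-v_2 \in L^1_{loc}(\Omega)$, the standard fundamental lemma of the calculus of variations applies and forces $v_1=v_2$ almost everywhere in $\Omega$.

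There is really no technical obstacle here; the only point worth flagging is that the argument succeeds precisely because the space of test functions in Definition \ref{RWFD} is $C_0^\infty(\Omega)$, independent of $\alpha$, and because the nonlocal/pollution effect of ${^{+}}{D}{^{\alpha}}\tilde\varphi$ is pushed entirely onto the $u$-side of the pairing, so that subtracting the two defining identities cancels that (potentially delicate) term cleanly. The same subtraction-and-test argument also handles the right weak fractional derivative by interchanging the roles of ${^{-}}{D}{^{\alpha}}$ and ${^{+}}{D}{^{\alpha}}$.
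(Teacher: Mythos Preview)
Your proof is correct and follows essentially the same approach as the paper: subtract the two defining identities to obtain $\int_\Omega (v_1 - v_2)\varphi\,dx = 0$ for all $\varphi\in C_0^\infty(\Omega)$, then invoke the fundamental lemma of the calculus of variations to conclude $v_1 = v_2$ almost everywhere. The paper's proof is terser but identical in substance; your additional commentary about why the pollution effect is irrelevant here is accurate and worth including.
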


\begin{proof}
	Let $v_1, v_2 \in L^1_{loc} ( \Omega)$ be two left (resp. right) weak fractional derivatives of $u$, then
	\begin{align*}
	\int_{\Omega} v_1(x) \varphi(x) \, dx = (-1)^{[\alpha]} \int_{\Omega} u(x) {^{\pm}}{D}{^{\alpha}} 
	\tilde{\varphi}(x) \, dx 
	=   \int_{\Omega} v_2(x) \varphi(x) \, dx  \quad\forall \varphi \in C_{0}^{\infty} (\Omega).
	\end{align*}
	Thus, 
	\begin{align*}
	0 = \int_{\Omega} \big(v_1(x) - v_2(x) \big) \varphi(x) \, dx \qquad\forall \varphi \in C^{\infty}_{0} (\Omega).
	\end{align*}
	Therefore, $v_1 = v_2$ almost everywhere. The proof is complete.
\end{proof}

\smallskip
  A few remarks are given below to help understand the above definition. 
  
    \begin{remark}
       (a) The introduction of $\tilde{\varphi}$ in the definitions makes the weak fractional derivatives intrinsic 
       in the sense that ${^{\pm}}{D}{}^{\alpha} \tilde{\varphi}$ is independent of the choice of ${^{\pm}}{D}{}^{\alpha}$, because 
       ${ }{D}{^{\alpha}_{x}}\tilde{\varphi}={_{a}}{D}{^{\alpha}_{x}}\tilde{\varphi}= {^{\mathcal{F}}}{D}{^{\alpha}}\tilde{\varphi}$ and  
       ${_{x}}{D}{^{\alpha} }\tilde{\varphi}={_{x}}{D}{^{\alpha}_{b}}\tilde{\varphi}= {^{\mathcal{F}}}{D}{^{\alpha}}\tilde{\varphi}$.
       
       (b) The constant $(-1)^{[\alpha]}$ helps guarantee consistency with the integer order case. 
       
      (c) Integration by parts is built into the definitions.
       
      (d) The reason to require $u \in L^{1}(\Omega)$ is because ${^{\pm}}{D}{^{\alpha}} \tilde{\varphi} \in L^{\infty}(\R)$  
       is not compactly supported. When $\alpha \in \N$, this condition can be relaxed to $L^{1}_{loc}(\Omega)$. In fact, the restriction $u \in L^{1}(\Omega)$ can be relaxed to the weighted $L^1$ space 
       $u\in L^1(\Omega, \rho)$ with the weight $\rho=L'$ or $\rho=R'$.
       
       (e)  As expected, weak fractional derivatives are {\em domain-dependent}. 
       	However, unlike 
       the classical fractional derivatives, whose domain dependence is explicitly shown in the limits of the integrals
       involved, the domain dependence of weak fractional derivatives is implicitly introduced by using 
       domain-dependent test functions $\varphi\in C^\infty_0(\Omega)$.  
       
      (f)  The above definitions can be easily extended to non-interval domains or subdomains of $\Omega$.  Indeed, given  a bounded set $E\subset \R$,  the 
      only changes which need  to be made in the definitions are to replace $\Omega$ by $E$ and $\varphi\in C^\infty_0(\Omega)$  by  $\varphi\in C^\infty_0((a^*, b^*))$  where $(a^*,b^*) =\cap\{(c,d):\, E\subset (c,d)\}$,
       the smallest interval which contains $E$.
    
       (g) Extensions of the above definitions to distributions will be given in Section \ref{sec-6}.
      \end{remark}

\smallskip
 The following result is trivial and expected, see \cite[Theorem 2.5]{Feng_Sutton}
	for a proof. 

 \begin{proposition}\label{Weak=RL}
	Let $u$ be Riemann-Liouville differentiable such that ${^{\pm}}{D}{^{\alpha}}u \in L^{1}_{loc}(\Omega)$. Then ${^{\pm}}{\mathcal{D}}{^{\alpha}} u = {^{\pm}}{D}{^{\alpha}} u$ almost everywhere.
\end{proposition}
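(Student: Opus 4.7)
The plan is to verify the defining integration-by-parts identity directly; uniqueness of the weak fractional derivative (just established) then forces ${^-}\mathcal{D}^\alpha u = {^-}D^\alpha u$ a.e., and similarly on the right. Concretely, it suffices to prove the classical integration-by-parts formula
\[
\int_\Omega {^-}D^\alpha u(x)\,\varphi(x)\, dx = (-1)^{[\alpha]} \int_\Omega u(x)\,{^+}D^\alpha \tilde\varphi(x)\, dx \qquad \forall\, \varphi \in C^\infty_0(\Omega),
\]
together with the symmetric identity obtained by swapping left and right. Once this is in hand, reading off the integrand against an arbitrary test function shows ${^-}D^\alpha u$ satisfies the defining relation of Definition \ref{RWFD}, and uniqueness closes the argument.

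First I would treat the canonical case $0<\alpha<1$ on a finite interval $\Omega=(a,b)$, where $[\alpha]=0$. Starting from ${^-}D^\alpha u = \tfrac{d}{dx}\, {^-}I^{1-\alpha} u$, substitute into the left-hand side and integrate by parts in $x$; the boundary contributions vanish because $\varphi$ has compact support in $\Omega$. Writing the remaining integral ${^-}I^{1-\alpha} u(x)$ out as a single integral in $y$ and then applying Fubini's theorem to swap the order of integration produces an outer integral in $y$ whose integrand is $u(y)$ times an inner integral in $x$. That inner integral, after undoing the by-parts step on the right endpoint side, is precisely $-\tfrac{d}{dy}\,{^+}I^{1-\alpha}\tilde\varphi(y) = {^+}D^\alpha \tilde\varphi(y)$, yielding the identity with the sign $(-1)^0=1$. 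For general $\alpha$ with $n-1<\alpha<n$, I would iterate the procedure: express ${^-}D^\alpha u$ as the $n$th classical derivative of ${^-}I^{n-\alpha} u$, integrate by parts $n$ times (boundary terms again vanish), apply Fubini, and track signs. The $n$ sign flips combine with the one inherent in recognizing the inner integral as $(-1)^n$ times a derivative acting on $\tilde\varphi$ to produce exactly the factor $(-1)^{[\alpha]}=(-1)^{n-1}$. The case $\Omega=\R$ goes through unchanged, since compact support of $\tilde\varphi$ supplies all decay at infinity that the argument needs.

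The main technical obstacle will be the rigorous justification of Fubini's theorem, which requires absolute integrability of the iterated integrand. This follows by combining $u\in L^1(\Omega)$ with the theorem of Subsection \ref{sec-2.7} guaranteeing ${^\pm}D^\alpha \tilde\varphi \in L^\infty(\R)$, together with the compactness of $\mathrm{supp}(\varphi)$ to control the convolution kernel $(x-y)^{\alpha-1}$ over the region where $\varphi^{(n)}$ is supported. The other delicate point is the correct identification of the inner integral with $\pm\,{^+}D^\alpha \tilde\varphi$ on all of $\Omega$: because of the pollution effect described in Subsection \ref{sec-2.7}, ${^+}D^\alpha \tilde\varphi$ is generally nonzero throughout the region to the left of $\mathrm{supp}(\varphi)$, but the outer $y$-integration is taken against $u\in L^1(\Omega)$, which automatically accommodates the extended effective support and produces the stated identity on the full domain $\Omega$.
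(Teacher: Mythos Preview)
Your approach is exactly what the paper has in mind: the paper does not give an independent proof but simply cites the classical Riemann--Liouville integration-by-parts formula (``see \cite[Theorem 2.5]{Feng_Sutton}''), and your plan is precisely to reproduce that formula and then invoke uniqueness of the weak fractional derivative. For $0<\alpha<1$ your outline is correct and complete: integrate by parts once to move the derivative off ${^{-}}I^{1-\alpha}u$ onto $\varphi$, apply Fubini (justified since $u\in L^1(\Omega)$ and $\int_y^b (x-y)^{-\alpha}|\varphi'(x)|\,dx\le C(b-a)^{1-\alpha}\|\varphi'\|_\infty$), and recognize the result as ${^{+}}D^\alpha\tilde\varphi$.

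Your sign bookkeeping for general $n-1<\alpha<n$ does not come out as you claim, however. Carrying out the computation: $n$ integrations by parts give $(-1)^n$; Fubini gives no sign; and since for compactly supported $\varphi$ one has $D^n\,{_x}I^{\,n-\alpha}_{\,b}\varphi = {_x}I^{\,n-\alpha}_{\,b}\varphi^{(n)}$, the identification ${_x}I^{\,n-\alpha}_{\,b}\varphi^{(n)} = (-1)^n\,{^{+}}D^\alpha\tilde\varphi$ contributes another $(-1)^n$. The net factor is therefore $(-1)^{2n}=1$, not $(-1)^{n-1}$. In other words the classical identity is $\int_\Omega {^{-}}D^\alpha u\,\varphi = \int_\Omega u\,{^{+}}D^\alpha\tilde\varphi$ with no sign, for every $\alpha>0$. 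This actually matches Definition~\ref{RWFD} only when $[\alpha]$ is even, so the proposition (and the paper's sign convention) should be read in the regime $0<\alpha<1$ that the paper emphasizes throughout Section~\ref{sec-4}; there $[\alpha]=0$ and everything is consistent. Restrict your write-up to that range, or at least flag the sign issue rather than asserting $(-1)^{n-1}$.
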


The next result shows the consistency with integer order weak derivatives.

\begin{proposition}
    Let $n - 1 < \alpha <n$. The $\alpha$ order weak fractional derivative converges to the $n^{th}$ order weak derivative almost everywhere as $\alpha \rightarrow n$. 
\end{proposition}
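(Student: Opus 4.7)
The plan is to show that $v_\alpha := {^{\pm}}{\mathcal{D}}{^{\alpha}} u$ converges almost everywhere to the $n$th weak derivative $u^{(n)}$ by first establishing distributional convergence from the defining integration-by-parts identity, and then upgrading to a.e.\ convergence via an integral representation of $v_\alpha$ provided by the weak fractional calculus framework developed earlier in this section.

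First I would analyze the action of the classical fractional derivative on a test function $\varphi\in C_0^\infty(\Omega)$. Since $\tilde\varphi$ is smooth and compactly supported on $\R$, repeated integration by parts in ${^{\mp}}{D}{^{\alpha}}\tilde\varphi$ yields the Caputo-type representation
\[
{^{\mp}}{D}{^{\alpha}}\tilde\varphi = (\pm 1)^n\, {^{\mp}}{I}{^{n-\alpha}}\bigl[\tilde\varphi^{(n)}\bigr],
\]
with all boundary contributions vanishing because $\tilde\varphi$ and its derivatives have compact support. As $\alpha\to n$, the order $\sigma:=n-\alpha\to 0^+$ and ${^{\mp}}{I}{^{\sigma}}\to\mathrm{id}$ on continuous compactly supported functions, so ${^{\mp}}{D}{^{\alpha}}\tilde\varphi\to (\pm 1)^n\tilde\varphi^{(n)}$ uniformly on $\R$ with a uniform $L^\infty$ bound. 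Inserting this into
\[
\int_\Omega v_\alpha\varphi\,dx = (-1)^{[\alpha]}\int_\Omega u\cdot {^{\mp}}{D}{^{\alpha}}\tilde\varphi\,dx
\]
and invoking dominated convergence (legal since $u\in L^1(\Omega)$) gives, after collecting the signs, the integer-order integration-by-parts formula that characterizes $u^{(n)}$. Hence $v_\alpha\to u^{(n)}$ in $\mathcal{D}'(\Omega)$.

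To upgrade to a.e.\ convergence I would invoke the Fundamental Theorem of Weak Fractional Calculus (the weak analogue of Theorem \ref{FTFC} to be established earlier in this section) to write $v_\alpha$ pointwise in the form ${^{\pm}}{I}{^{n-\alpha}}u^{(n)}$ plus boundary contributions that vanish as $\alpha\to n$. The operator ${^{\pm}}{I}{^{\sigma}}$ is an approximate identity as $\sigma\to 0^+$: at every Lebesgue point $x$ of $u^{(n)}$ one has ${^{\pm}}{I}{^{\sigma}}u^{(n)}(x)\to u^{(n)}(x)$, because the kernel $y^{\sigma-1}/\Gamma(\sigma)$ concentrates into a Dirac mass at the origin. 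Since almost every point is a Lebesgue point of $u^{(n)}\in L^1_{\mathrm{loc}}$, this yields the claim.

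The main obstacle is this final step. The distributional convergence is almost mechanical, but the a.e.\ conclusion requires both a clean pointwise integral representation of $v_\alpha$ in terms of $u^{(n)}$ and a Lebesgue-differentiation-type theorem for ${^{\pm}}{I}{^{\sigma}}$ as $\sigma\to 0^+$; controlling the blow-up of $1/\Gamma(n-\alpha)$ and verifying that the boundary terms in the FTwFC representation vanish in the limit is the most delicate bookkeeping. If the required integral representation is unavailable, an alternative route is to combine Proposition \ref{Weak=RL} with the density of smooth functions and the pointwise limit ${^{\pm}}{D}{^{\alpha}}\psi\to\psi^{(n)}$ for smooth $\psi$ (which itself follows from the Caputo representation and the approximate-identity property of ${^{\pm}}{I}{^{\sigma}}$).
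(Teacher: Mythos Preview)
Your first step---showing that $\int_\Omega v_\alpha\varphi\,dx\to\int_\Omega u^{(n)}\varphi\,dx$ by passing the limit $\alpha\to n$ through the classical derivative of the test function---is exactly the paper's entire proof. The paper simply computes
\[
\lim_{\alpha\to n}\int_\Omega\bigl({^{\pm}}{\mathcal{D}}{^{\alpha}}u-\mathcal{D}^n u\bigr)\varphi\,dx=0
\]
from the consistency ${^{\mp}}{D}{^{\alpha}}\varphi\to(\pm 1)^n\varphi^{(n)}$ and stops there, labelling the conclusion ``a.e.'' without further justification. So your distributional argument already reproduces what the paper actually does.

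Your second step, upgrading to genuine a.e.\ convergence, goes beyond the paper, and the strategy is sound, but you have misidentified the tool. The FTwFC (Theorem~\ref{FTWFC}) expresses $u$ as $c\kappa_\pm^\alpha+{^{\pm}}{I}{^{\alpha}}{^{\pm}}{\mathcal{D}}{^{\alpha}}u$; it does \emph{not} express ${^{\pm}}{\mathcal{D}}{^{\alpha}}u$ in terms of $u^{(n)}$. The representation you want is the (weak version of the) Riemann--Liouville/Caputo relation \eqref{RL_Caputo_l}--\eqref{RL_Caputo_r}, namely
\[
{^{\pm}}{\mathcal{D}}{^{\alpha}}u={^{\pm}}{I}{^{n-\alpha}}u^{(n)}+\sum_{k=0}^{n-1}\frac{u^{(k)}(a\text{ or }b)}{\Gamma(k+1-\alpha)}\,\kappa_\pm^{\,k+1-\alpha},
\]
which one obtains either directly from integration by parts or by writing $u$ via the integer-order fundamental theorem and applying ${^{\pm}}{\mathcal{D}}{^{\alpha}}$. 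The boundary sum does vanish as $\alpha\to n$ because $1/\Gamma(k+1-\alpha)\to 1/\Gamma(k+1-n)=0$ for $k\le n-1$, and then the approximate-identity property of ${^{\pm}}{I}{^{\sigma}}$ at Lebesgue points gives the a.e.\ limit. With that correction your route is viable and strictly more complete than the paper's argument.
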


\begin{proof}
    Consider the case when $n=1$; the others follow similarly. In order to prove that ${^{\pm}}{\mathcal{D}}{^{\alpha}} u \rightarrow \mathcal{D}u$ almost everywhere as $\alpha \rightarrow 1$, we see that
    \begin{align*}
        0 
        &= \int_{\Omega} u \varphi'\,dx + \lim_{\alpha \rightarrow 1} (-1)^{[\alpha]} \int_{\Omega} u {^{\mp}}{D}{^{\alpha}} \varphi\,dx \\ 
        &= \lim_{\alpha \rightarrow 1}  \int_{\Omega} {^{\pm}}{\mathcal{D}}{^{\alpha}} u \varphi\,dx - \int_{\Omega} \mathcal{D}u \cdot \varphi\,dx \\
        &= \lim_{\alpha \rightarrow 1} \int_{\Omega} (  {^{\pm}}{\mathcal{D}}{^{\alpha}} u  - \mathcal{D}u) \varphi\,dx,
    \end{align*}
    which follows by the consistency of classical derivatives on functions $\varphi \in C^{\infty}_{0}(\Omega)$.
\end{proof}

    \subsection{Relationships with Other Derivative Notions}\label{sec-4.2}
    Although the notion of a weak fractional derivative is analogous to the integer order weak derivative and hence is deserving of the name in this sense, we provide simple examples to illustrate the following points.
    \begin{itemize}
        \item [(a)] The notion of a weak fractional derivative is a unifying concept of fractional differentiation with respect to the derivatives defined in Section \ref{sec-2}.
        \item[(b)]  Weak fractional derivatives can exist for functions whose classical fractional
        derivatives do not exist. 
        \item[(c)] Functions that do not have first order weak derivatives may have weak fractional derivatives. 
    \end{itemize}

    First, we give a simple example to demonstrate that the weak fractional derivative is a unifying concept (item (a) above). To illustrate this point, we consider 
    $\Omega = \R$, $0 < \alpha <1$, and $c \in \R \setminus\{0\}$, let $u(x) \equiv c$. Trivially, 
    \begin{align*}
    {^{C}}{D}{^{\alpha}_{x}} u(x) = \dfrac{1}{\Gamma(1-\alpha)} \int_{-\infty}^{x} \dfrac{u'(y)}{ (x-y)^{-\alpha}}\,dy  
    =  0, 
    \end{align*}
    hence, the Caputo derivative is identically zero.  However,  
    \begin{align*}
    {}{D}{^{\alpha}_{x}} u(x) &= \dfrac{1}{\Gamma(1-\alpha)} \dfrac{d}{dx}\int_{-\infty}^{x} \dfrac{c}{(x-y)^{\alpha}} \,dy = \dfrac{c}{\Gamma(1-\alpha)}\dfrac{d}{dx} \left( \dfrac{(x-y)^{1-\alpha}}{1-\alpha} \bigg|_{y=x}^{y=-\infty} \right),
    \end{align*}
    which does not exist as a function because the singular integral diverges, hence, 
    the Riemann-Liouville fractional derivative does not exist on $\R$. Thus in the classical case, the choice of fractional derivative definition becomes essential. 
    
   We now compute the weak fractional derivative of $u$ below. For any $\varphi \in C^{\infty}_{0}(\R)$,
    \begin{align*}
        \int_{\R} c {^{+}}{D}{^{\alpha}} \varphi(x)\,dx &= \int_{\R} c \dfrac{d}{dx} {}{I}{^{1-\alpha}_{x}} \varphi(x) \,dx
        = c \Bigl[ {}{I}{^{1-\alpha}_{x}} \varphi(x) \Bigr] \bigg|_{-\infty}^{\infty} = 0.
    \end{align*}
    Therefore, the weak derivative exists and is equal to zero, which coincides 
    with the Caputo derivative. Here we see that by forcing the integration by parts 
    formula to hold, the definition automatically selects the appropriate fractional derivative.
    
    What if $\Omega = (a,b)$ is finite? In this case, we know that $0={^{C}_{a}}{D}{^{\alpha}_{x}} c \neq {_{a}}{D}{^{\alpha}_{x}} c 
    = c \Gamma(1-\alpha)^{-1} (x-a)^{-\alpha}$. 
    A simple calculation yields that 
    \begin{align*}
        \int_{a}^{b} c {^{+}}{D}{^{\alpha}}\varphi(x)\, dx &= c\int_{a}^{b}  \dfrac{d}{dx} {_{x}}{I}{^{\alpha}_{b}} \varphi(x)\,dx = c \Bigl[ {_{x}}{I}{^{\alpha}_{b}} \varphi(x) \Bigr] \bigg|_{a}^{b} = c\, {_{a}}{I}{^{\alpha}_{b}}\varphi(b)
    \end{align*}
     holds for all $\varphi \in C^{\infty}_{0}((a,b))$, which shows that the Caputo 
     derivative of constant $c$ (that is zero) can not satisfy the integration by parts 
     formula. Hence, 
     $ {^{C}_{a}}{{D}}{_{x}^{\alpha}} c\neq {^{-}}{\mathcal{D}}{^{\alpha}} c$.  
     However, a direct computation shows that 
    \begin{align*}
        \int_{a}^{b} c {^{+}}{D}{^{\alpha}}\varphi\,dx  = \int_{a}^{b} \varphi  {_{a}}{D}{^{\alpha}
        _{x}} c\,dx  \qquad \forall \varphi \in C^{\infty}_{0}((a,b)).
    \end{align*}
    Hence, ${^{-}}{\mathcal{D}}{^{\alpha}} c = {_{a}}{D}{^{\alpha}_x} c$. Again, we see that 
    the built-in feature of an integration by parts formula effectively selects an appropriate fractional derivative.

    Next, we illustrate that the notion of weak fractional derivatives is truly a generalization 
    of the notion of classical fractional derivatives by showing that there are functions whose
    weak derivatives exist, but classical fractional (Rieamann-Liouville) derivatives do not. 
    Moreover, we give a characterization of functions that are weakly differentiable, which parallels the characterization for first order weakly differentiable functions.
   In lieu of concrete examples, we demonstrate that there is a procedural way to produce 
   functions that are not Riemann-Liouville differentiable, but are weakly differentiable. 
   Notice that for $u \in L^{1}(\Omega)$ and $\varphi \in C^{\infty}_{0}(\Omega)$, there holds
    \begin{align*}
        \int_{\Omega} u {^{\mp}}{D}{^{\alpha}} \varphi\,dx &= \int_{\Omega} u {^{\mp}}{I}{^{1-\alpha}} \varphi'\,dx = \int_{\Omega} {^{\pm}}{I}{^{1-\alpha}} u \varphi '\,dx.
    \end{align*}
    In order to perform an integration by parts on the right side, we need that ${^{\pm}}{I}{^{1-\alpha}}u\in W^{1,1}(\Omega)$ (or at least absolutely continuous).
    In that case, the function $u$ then has a weak fractional derivative. On the other hand,
     we want the function 
    $u$ not to be Riemann-Liouville differentiable, which requires that ${^{\pm}}{I}{^{1-\alpha}} u \not \in C^{1}(\Omega)$. Since ${^{\pm}}{I}{^{1-\alpha}}u \in W^{1,1}(\Omega)$ does not imply ${^{\pm}}{I}{^{1-\alpha}}u \in C^{1}(\Omega) $, then we want to find $u \in L^{1}(\Omega)$ so that ${^{\pm}}{I}{^{1-\alpha}} u = f$ 
    for a given function $f \in W^{1,1}(\Omega)$, but $f \not\in C^{1}(\Omega)$. There are many such $f$ functions, the best known example perhaps is $f(x) = |x|$. 
 
    It follows from Lemma \ref{lemma3.1} that we obtain the desired 
    examples by taking $u  = {^{\pm}}{D}{^{1-\alpha}} f$ 
    for any $f \in \bigl\{ W^{1,1}(\Omega); f \not\in C^{1}(\Omega) \mbox{ and }
    {^{\pm}}{D}{^{1-\alpha}}f \mbox{ exists} \bigr\}$. By the characterization of functions in $W^{1,1}(\Omega)$, 
    we conclude that
	  \textit{$u$ is weakly fractional differentiable with ${^{\pm}}{\mathcal{D}}{^{\alpha}} u \in L^{1}(\Omega)$ if and only ${^{\pm}}{I}{^{1-\alpha}} u$ is absolutely continuous.
    } 
   
   \begin{remark} The above procedure can be relaxed to characterize all weakly fractional differentiable functions by requiring $f$ to be only first order weakly differentiable; rather than $f \in W^{1,1}(\Omega)$. However, the above procedure does produce a rich (and nearly complete) validation of item (b) above. 
   \end{remark}
    
    Finally, we compare the weak fractional derivative to the integer order weak derivative; in particular, we demonstrate that the notion of weak fractional derivative is indeed consistent with, and extends, the notion of integer order weak derivatives by identifying a class of functions so that their weak fractional derivatives exist, but their integer order weak derivatives do not. 

    To that end, consider $\Omega = (-1,1)$ and $\lambda , \mu \in \R$ so that $\lambda \neq \mu$, 
    then define
    \begin{align*}
        u(x) : = \begin{cases}
            \lambda &\text{if } -1<x <0, \\ 
            \mu &\text{if } 0<x <1;
        \end{cases}
    \end{align*}
    a genuine step function. Let $\mathcal{D}$ denote the first order weak derivative operator. 
    Obviously, $\mathcal{D}u$ does not exist (cf. \cite{Brezis}) because $u \not\in C((-1,1))$; such a function has only a distributional derivative. 
    However,  a direct calculation shows that  
    \begin{align*}
        \int_{-1}^{1} u  {^{\mp}}{D}{^{\alpha}} \varphi \,dx = \int_{-1}^{1}  \varphi {^{\pm}}{ \mathcal{D}}{^{\alpha}}u \,dx \qquad \forall \varphi\in C^\infty_0((-1,1))
    \end{align*}
    holds, where 
    \begin{align*}
        {^{-}}{ \mathcal{D} }{^{\alpha}} u(x) := \begin{cases}
         \dfrac{1}{\Gamma(1-\alpha)} \dfrac{\lambda}{(x+1)^{\alpha}}&\text{if } x \in (-1,0], \\ 
         \dfrac{1}{\Gamma(1-\alpha)} \left( \dfrac{\lambda}{(x+1)^{\alpha}} - \dfrac{\lambda}{x^{\alpha}} + \dfrac{\mu}{x^{\alpha}} \right) &\text{if } x \in (0,1).
        \end{cases} 
    \end{align*}
    A similar formula also holds for ${^{+}}{ \mathcal{D} }{^{\alpha}}$. Note that the weak derivative is locally integrable. In fact, since $0 < \alpha <1$, it is globally integrable; an observation that is foundational to density properties  in the fractional Sobolev spaces introduced in \cite{Feng_Sutton1a}.
    Thus, we have shown that all step functions are weakly fractional differentiable, but are not weakly differentiable to any integer order. In fact, it can be shown that the same conclusion also holds for all piecewise smooth, but globally discontinuous functions. Simple exams are given in \cite{Brezis, Evans}.
    
    \subsection{Approximation and Characterization of Weak Fractional Derivatives}\label{sec-4.3}
    In this subsection we present a characterization for weak fractional derivatives so that they can be approached/understood from a different, but equivalent point of view. Like in the integer order case, 
    we prove that weakly fractional differentiable functions can be approximated by 
    smooth functions.  Unless it is stated otherwise, we assume $0<\alpha< 1$ in this subsection. 
    
    \subsubsection{\bf The Finite Interval Case}\label{sec-4.3.1}
     
     We first consider the case when $\Omega:=(a,b) \subset \R$ is a finite interval. Let 
    $\eps > 0$,  define the $\eps$- interior of $\Ome$ as $\Omega_{\eps} : = \{x \in \Omega \, : \mbox{dist}(x,\partial \Omega) >\eps\}.$

        \begin{lemma}\label{WDMollifier}
           Suppose ${^{\pm}}{\mathcal{D}}{^{\alpha}} u \in L_{loc}^{1} (\Omega)$ exists. Then
            \begin{align}\label{WeakMollifier}
                {^{\pm}}{\mathcal{D}}{^{\alpha}} \tilde{u}^{\eps} = \eta_{\eps} * {^{\pm}}{\mathcal{D}}{^{\alpha}}u \qquad \mbox{a.e. in } \Omega_{\eps}
            \end{align}
 where $\eta_{\eps}$ denotes the standard mollifier and $ \tilde{u}^{\eps} $ stands for the 
 mollification of $\tilde{u}$.
        \end{lemma}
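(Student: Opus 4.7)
My plan is to test the weak derivative ${^{-}}{\mathcal{D}}{^{\alpha}} u$ against the family of translated-and-reflected mollifiers $y\mapsto\eta_\eps(x-y)$, one test function per $x\in\Omega_\eps$, and then separately evaluate ${^{-}}{\mathcal{D}}{^{\alpha}}\tilde u^\eps$ via the classical Liouville derivative, which is available because $\tilde u^\eps$ is smooth. I focus on the left derivative case; the right derivative case is entirely symmetric.

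First, I fix $x\in\Omega_\eps$ and set $\varphi_x(y):=\eta_\eps(x-y)$. Since $\operatorname{supp}(\varphi_x)\subset B(x,\eps)\subset\Omega$, this $\varphi_x$ lies in $C_0^\infty(\Omega)$ and its zero extension agrees with $\eta_\eps(x-\cdot)$ on all of $\R$. Using $[\alpha]=0$ and Definition \ref{RWFD} with $\varphi_x$ yields
\begin{equation*}
    (\eta_\eps * {^{-}}{\mathcal{D}}{^{\alpha}} u)(x) = \int_\Omega u(y)\,{^{+}}{D}{^{\alpha}}\bigl[\eta_\eps(x-\cdot)\bigr](y)\,dy.
\end{equation*}
The next key step is a reflection identity for classical Liouville derivatives: for any $\psi\in C_0^\infty(\R)$, the substitution $z=x-w$ in the defining integral of ${^{+}}{D}{^{\alpha}}$ together with the chain rule give ${^{+}}{D}{^{\alpha}}[\psi(x-\cdot)](y) = ({^{-}}{D}{^{\alpha}}\psi)(x-y)$. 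Applying this with $\psi=\eta_\eps$ rewrites the display above as
\begin{equation*}
    (\eta_\eps * {^{-}}{\mathcal{D}}{^{\alpha}} u)(x) = \int_\Omega u(y)\,({^{-}}{D}{^{\alpha}}\eta_\eps)(x-y)\,dy.
\end{equation*}

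Next, I turn to the left-hand side of \eqref{WeakMollifier}. Since $\tilde u^\eps\in C^\infty(\R)\cap L^1(\R)$, Proposition \ref{Weak=RL} identifies ${^{-}}{\mathcal{D}}{^{\alpha}}\tilde u^\eps$ with the classical Liouville derivative ${^{-}}{D}{^{\alpha}}\tilde u^\eps$. Writing ${^{-}}{D}{^{\alpha}}=\tfrac{d}{dx}\,{^{-}}{I}{^{1-\alpha}}$, unfolding the convolution in $\tilde u^\eps=\eta_\eps*\tilde u$, swapping the order of integration by Fubini, and performing the substitution $w=z-y$ give ${^{-}}{I}{^{1-\alpha}}\tilde u^\eps(x) = \int_\R \tilde u(y)\,{^{-}}{I}{^{1-\alpha}}\eta_\eps(x-y)\,dy$. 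Differentiating in $x$ then produces
\begin{equation*}
    {^{-}}{D}{^{\alpha}}\tilde u^\eps(x) = \int_\R \tilde u(y)\,({^{-}}{D}{^{\alpha}}\eta_\eps)(x-y)\,dy = \int_\Omega u(y)\,({^{-}}{D}{^{\alpha}}\eta_\eps)(x-y)\,dy,
\end{equation*}
which coincides with the earlier expression and finishes the proof.

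The step I expect to be the main obstacle is the rigorous justification of the Fubini swap together with the differentiation under the integral in the presence of the singular kernel $(x-z)^{-\alpha}$. Compact support of $\eta_\eps$ localizes the singularity to a bounded region, which makes the iterated integrals absolutely convergent; to move $\tfrac{d}{dx}$ past the outer integral I would perform an integration by parts that transfers the derivative onto $\eta_\eps$, incurring no boundary term since $\eta_\eps$ vanishes outside its compact support.
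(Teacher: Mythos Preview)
The paper does not give its own proof of this lemma; it defers to \cite[Lemma 3.3]{Feng_Sutton}, so there is no in-paper argument to compare against. Your argument is correct and is precisely the natural fractional analogue of the standard integer-order mollifier lemma (as in Evans): test the weak derivative against the translated mollifier $\eta_\eps(x-\cdot)\in C_0^\infty(\Omega)$, invoke the reflection identity ${^{+}}{D}{^{\alpha}}[\eta_\eps(x-\cdot)](y)=({^{-}}{D}{^{\alpha}}\eta_\eps)(x-y)$, and recognize the resulting convolution as ${^{-}}{D}{^{\alpha}}(\eta_\eps*\tilde u)=({^{-}}{D}{^{\alpha}}\eta_\eps)*\tilde u$. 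The technical steps you flag as potential obstacles are in fact routine here: since $\tilde u$ is supported in $[a,b]$ and ${^{-}}{D}{^{\alpha}}\eta_\eps$ is continuous and bounded on $\R$ (it is smooth with a decaying pollution tail), both the Fubini swap and the differentiation under the integral follow directly from dominated convergence, and no integration-by-parts workaround is needed.
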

        
      We omit the proof to save space and refer the reader to \cite[Lemma 3.3]{Feng_Sutton} for 
      details.

        The next theorem gives a characterization of fractional order weak derivatives. 
        
        \begin{theorem}
            Let $u \in L^{1}(\Omega)$. Then $v = {^{\pm}}{\mathcal{D}}{^{\alpha}} u$ in  $L^{1}_{loc} (\Omega)$ if and only if there exists a sequence $\left\{u_j \right\}_{j=1}^{\infty} \subset C^{\infty} (\Omega)$ such that $u_j \rightarrow u$ in $L^{1}(\Omega)$ and ${^{\pm}}{\mathcal{D}}{^{\alpha}} u_j \rightarrow v$ in $L^{1}_{loc}(\Omega)$ as $j \rightarrow \infty$. 
        \end{theorem}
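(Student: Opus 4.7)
The plan is to prove the two implications separately, using mollification for the forward direction and passing to the limit in the defining integral identity for the converse.

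For the forward direction, assume that $v = {^{\pm}}{\mathcal{D}}{^{\alpha}} u$ in $L^{1}_{loc}(\Omega)$. I would take $u_j := \tilde u^{1/j}$, the standard mollification of the zero extension $\tilde u$ of $u$. Then $u_j \in C^\infty(\R) \subset C^\infty(\Omega)$ and $u_j \to u$ in $L^1(\Omega)$ by classical mollifier theory. The key tool is Lemma \ref{WDMollifier}, which yields ${^{\pm}}{\mathcal{D}}{^{\alpha}} u_j = \eta_{1/j} * v$ almost everywhere in $\Omega_{1/j}$. For any compact set $K \subset \Omega$, we have $K \subset \Omega_{1/j}$ for all $j$ large enough, so
\begin{align*}
\int_K \bigl|{^{\pm}}{\mathcal{D}}{^{\alpha}} u_j - v\bigr|\, dx = \int_K \bigl|\eta_{1/j} * v - v\bigr|\, dx \longrightarrow 0 \quad \text{as } j \to \infty,
\end{align*}
again by standard mollifier convergence for $L^1_{loc}$ functions. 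This produces the desired approximating sequence.

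For the converse, assume such a sequence $\{u_j\} \subset C^\infty(\Omega)$ is given. Fix $\varphi \in C_0^\infty(\Omega)$ and set $K := \supp(\varphi)$, a compact subset of $\Omega$. Since each $u_j$ is smooth on $\Omega$, its classical Riemann--Liouville fractional derivative is locally integrable, so Proposition \ref{Weak=RL} ensures that ${^{\pm}}{\mathcal{D}}{^{\alpha}} u_j$ coincides with the classical derivative, and hence satisfies the integration by parts formula of Definition \ref{RWFD}:
\begin{align*}
\int_{\Omega} {^{\pm}}{\mathcal{D}}{^{\alpha}} u_j \, \varphi \, dx = (-1)^{[\alpha]} \int_{\Omega} u_j \, {^{\mp}}{D}{^{\alpha}} \tilde{\varphi} \, dx.
\end{align*}
The left-hand side converges to $\int_\Omega v \varphi\, dx$ because ${^{\pm}}{\mathcal{D}}{^{\alpha}} u_j \to v$ in $L^1(K)$ and $\varphi$ is bounded. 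On the right-hand side, the earlier theorem on compactly supported smooth functions gives ${^{\mp}}{D}{^{\alpha}} \tilde{\varphi} \in L^\infty(\R)$, so $u_j \to u$ in $L^1(\Omega)$ yields $\int_\Omega u_j \,{^{\mp}}{D}{^{\alpha}} \tilde\varphi\, dx \to \int_\Omega u\, {^{\mp}}{D}{^{\alpha}} \tilde\varphi\, dx$. Combining the two limits shows that $v$ satisfies the defining identity for ${^{\pm}}{\mathcal{D}}{^{\alpha}} u$, and uniqueness (from the preceding proposition) finishes the argument.

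The main obstacle I anticipate is subtle bookkeeping rather than a conceptual difficulty: one must verify that the defining identity for ${^{\pm}}{\mathcal{D}}{^{\alpha}} u_j$ truly holds on all of $\Omega$ even though $u_j \in C^\infty(\Omega)$ is not compactly supported, which requires the pollution/integrability theorem of Section \ref{sec-2.7} to ensure ${^{\mp}}{D}{^{\alpha}} \tilde\varphi$ is $L^p$-integrable against $u_j \in L^1(\Omega)$. In the case $\Omega = \R$, one should also note that $u_j := \eta_{1/j} * u$ preserves $L^1(\R)$ integrability, so Lemma \ref{WDMollifier} and the weak derivative notion apply globally.
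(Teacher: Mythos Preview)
Your proposal is correct and follows essentially the same approach as the paper: mollification of the zero extension together with Lemma~\ref{WDMollifier} for the forward direction, and passing to the limit in the defining integration-by-parts identity (using $\|{^{\mp}}{D}{^{\alpha}}\tilde\varphi\|_{L^\infty}<\infty$ and the compact support of $\varphi$) for the converse. One small remark: this theorem is stated in the finite-interval subsection, so your closing comment about $\Omega=\R$ is unnecessary here---the paper treats that case separately in Theorem~\ref{characterization}, where the approximating sequence is additionally required to lie in $C^\infty_0(\R)$ and the construction is more delicate.
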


        \begin{proof}
            Let $u \in L^{1}(\Omega)$ and $u^{\eps}$ denote its mollification. 

            {\em Step 1:}  Suppose that $v = {^{\pm}}{\mathcal{D}}{^{\alpha}} u \in L^{1}_{loc}(\Omega)$.  Let $\tilde{u}^{\eps}$ denote the  mollification  of $\tilde{u}$. By the properties of mollification, $\tilde{u}^{\eps} \rightarrow u$ in $L^{1}(\Omega)$ as $\eps\to 0$. From lemma, we have  ${^{\pm}}{\mathcal{D}}{^{\alpha}} \tilde{u}^{\eps} = \eta_{\eps} * {^{\pm}}{\mathcal{D}}{^{\alpha}}u 
             \rightarrow {^{\pm}}{\mathcal{D}}{^{\alpha}} u$ in $L^{1}_{loc}(\Omega)$ as 
            $\eps\to 0$. Hence, $\{\tilde{u}^{\eps} \}$ is a desired sequence. 
            
            \smallskip
            {\em Step 2:}  Suppose that $\left\{u_{j} \right\}_{j=1}^{\infty} \subset C^{\infty}(\Omega)$ and $u_j \rightarrow u$ in $L^{1}(\Omega)$ and ${^{\pm}}{\mathcal{D}}{^{\alpha}} u_{j} \rightarrow v$ in $L^{1}_{loc} ( \Omega)$. Then for any $\varphi \in C^{\infty}_{0}(\Omega)$
            \begin{align*}
                \left|\int_{\Omega} (u -u_j)(x) {^{\mp}}{D}{^{\alpha}} \varphi(x) \,dx \right| &\leq M\|u - u_j \|_{L^{1}(\Omega)}\to 0 \qquad \mbox{as } j\to \infty,\\
                \left|\int_{\Omega} \left({^{\pm}}{\mathcal{D}}{^{\alpha}} u_j - v\right)(x) \varphi(x) \,dx \right| &= \left| \int_{K} \left({^{\pm}}{\mathcal{D}}{^{\alpha}} u_j - v\right)(x) \varphi(x) \,dx \right|\\
                &\leq M \left\|{^{\pm}}{\mathcal{D}}{^{\alpha}} u_j - v \right\|_{L^{1}(K)} \rightarrow 0
                \quad\mbox{as } j\to \infty,
            \end{align*}
            because $K:= \supp(\varphi)$ is compact.  It follows from the definition of weak fractional derivatives that 
            \begin{align*}
                (-1)^{[\alpha]} \int_{\Omega} u(x) {^{\mp}}{D}{^{\alpha}} \varphi (x) \, dx
                &= (-1)^{[\alpha]} \lim_{j \rightarrow \infty} \int_{\Omega} u_j(x) {^{\mp}}{D}{^{\alpha}} \varphi (x) \, dx \\ 
                &= \lim_{j \rightarrow \infty} \int_{\Omega} {^{\pm}}{\mathcal{D}}{^{\alpha}} u_{j}(x) \varphi(x) \, dx 
                = \int_{\Omega} v(x) \varphi(x) \, dx.
            \end{align*}
            By the uniqueness of the weak fractional derivative, we conclude that $v = {^{\pm}}{\mathcal{D}}{^{\alpha}} u$ almost everywhere.  The proof is complete. 
        \end{proof}
        
       \begin{corollary}
            Let $u \in L^{p}(\Omega)$ for $1 \leq p < \infty$. Then $v = {^{\pm}}{\mathcal{D}}{^{\alpha}} u$ in  $L^{q}_{loc} (\Omega)$ for $1\leq q <\infty$ if and only if there exists a sequence $\left\{u_j \right\}_{j=1}^{\infty} \subset C^{\infty} (\Omega)$ such that $u_j \rightarrow u$ in $L^{p}(\Omega)$ and ${^{\pm}}{\mathcal{D}}{^{\alpha}} u_j \rightarrow v$ in $L^{q}_{loc}(\Omega)$ as $j \rightarrow \infty$.
           
        \end{corollary}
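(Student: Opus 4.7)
The plan is to mimic the proof of the preceding theorem almost verbatim, replacing the $L^1$--$L^1_{loc}$ duality arguments with $L^p$--$L^{p'}$ and $L^q$--$L^{q'}$ arguments via H\"older's inequality. Both directions will use the same two ingredients as before: (i) the action of a mollifier commutes with the weak fractional derivative (Lemma \ref{WDMollifier}), and (ii) the classical fractional derivative ${^{\pm}}{D}{^{\alpha}} \varphi$ of a test function $\varphi \in C_0^\infty(\Omega)$ belongs to $L^r(\Omega)$ for every $1\le r\le\infty$ (the theorem immediately following Proposition \ref{Pollution}). Ingredient (ii) is precisely what makes a $C_0^\infty$ test function ``pair'' with any $L^p$ function.

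For the forward direction, I would set $u_j := \tilde u^{\eps_j}$ with $\eps_j\downarrow 0$, where $\tilde u^{\eps}$ denotes the mollification of the zero-extension of $u$. Standard mollifier theory gives $\tilde u^{\eps_j}\to u$ in $L^p(\Omega)$ since $u\in L^p(\Omega)$. By Lemma \ref{WDMollifier}, ${^{\pm}}{\mathcal{D}}{^{\alpha}}\tilde u^{\eps_j} = \eta_{\eps_j} * {^{\pm}}{\mathcal{D}}{^{\alpha}} u$ on $\Omega_{\eps_j}$; since $v={^{\pm}}{\mathcal{D}}{^{\alpha}}u \in L^q_{loc}(\Omega)$, the standard $L^q_{loc}$ convergence of mollifications yields ${^{\pm}}{\mathcal{D}}{^{\alpha}} u_j \to v$ in $L^q_{loc}(\Omega)$, as required.

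For the backward direction, fix $\varphi\in C_0^\infty(\Omega)$ with $K:=\supp(\varphi)$, and let $p'$ and $q'$ be the H\"older conjugates of $p$ and $q$ (with the conventions $p'=\infty$ when $p=1$, and likewise for $q'$). By H\"older's inequality together with ingredient (ii),
\begin{align*}
\Bigl|\int_\Omega (u-u_j){^{\mp}}{D}{^\alpha}\tilde\varphi\,dx\Bigr|
\le \|u-u_j\|_{L^p(\Omega)}\,\|{^{\mp}}{D}{^\alpha}\tilde\varphi\|_{L^{p'}(\Omega)}\longrightarrow 0,
\end{align*}
and
\begin{align*}
\Bigl|\int_\Omega\bigl({^{\pm}}{\mathcal{D}}{^\alpha} u_j - v\bigr)\varphi\,dx\Bigr|
\le \|{^{\pm}}{\mathcal{D}}{^\alpha} u_j - v\|_{L^q(K)}\,\|\varphi\|_{L^{q'}(K)}\longrightarrow 0
\end{align*}
as $j\to\infty$. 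Since each $u_j\in C^\infty(\Omega)$ satisfies the integration-by-parts identity defining the weak fractional derivative, passing to the limit produces
\begin{align*}
(-1)^{[\alpha]}\int_\Omega u(x){^{\mp}}{D}{^\alpha}\tilde\varphi(x)\,dx = \int_\Omega v(x)\varphi(x)\,dx\qquad\forall\varphi\in C_0^\infty(\Omega),
\end{align*}
and uniqueness of the weak fractional derivative then forces $v={^{\pm}}{\mathcal{D}}{^\alpha} u$ a.e.

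There is no genuine obstacle here; the only point requiring mild care is the endpoint case $p=1$ (resp.\ $q=1$), where $p'=\infty$ (resp.\ $q'=\infty$), but this is precisely covered by the $L^\infty$-bound on ${^{\mp}}{D}{^\alpha}\tilde\varphi$ given by the earlier integrability theorem and by the obvious $L^\infty$ bound on $\varphi$ itself. Thus the $L^1$ corollary is the true engine and the $L^p$--$L^q$ version follows by a transparent H\"older upgrade.
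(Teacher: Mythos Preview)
Your proposal is correct and is exactly the natural adaptation the paper has in mind: the corollary is stated in the paper without proof, immediately after the $L^1$--$L^1_{loc}$ theorem, precisely because the argument is the transparent H\"older upgrade you describe. Both directions---mollification plus Lemma \ref{WDMollifier} for the forward implication, and H\"older with the $L^r$-integrability of ${^{\mp}}{D}{^{\alpha}}\tilde\varphi$ for the backward one---are sound as written.
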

      
        \begin{remark}
            The conclusion of the above corollary still holds if $L^q_{loc}(\Omega)$ is replaced by $L^q(\Omega)$
            in the statement.
        \end{remark}

    \subsubsection{\bf The Infinite Domain Case}\label{sec-4.3.2}
    We now consider the case $\Omega = \R$. It turns out this case is significantly different
    from the finite interval case. In particular, it requires the construction of a compactly supported 
    approximation sequence for each fractionally differentiable function, which turns out is quite complicated. 
    
    First, we establish the following analogue of Lemma \ref{WDMollifier}. We refer the reader  
    to \cite[Lemma 3.3]{Feng_Sutton} for its proof. 
    
        \begin{lemma}\label{WDMollifierR}
        Suppose ${^{\pm}}{\mathcal{D}}{^{\alpha}}u \in L^1_{loc}(\R)$ exists, then 
             \begin{align} \label{WeakMollifier2}
                {^{\pm}}{\mathcal{D}}{^{\alpha}} u^\eps = \eta_{\eps} * {^{\pm}}{\mathcal{D}}{^{\alpha}} u \qquad\mbox{a.e. in } \R. 
            \end{align}
 
        \end{lemma}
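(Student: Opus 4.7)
The plan is to verify that $w := \eta_\eps * {^{\pm}}{\mathcal{D}}{^{\alpha}}u$ satisfies the defining identity of the weak fractional derivative of $u^\eps$, and then invoke uniqueness (the earlier Proposition). Since $\Omega = \R$, there is no need to pass to a zero extension, and the test function space is simply $C_0^\infty(\R)$.

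First, I would fix an arbitrary $\varphi \in C_0^\infty(\R)$ and compute
\begin{align*}
\int_{\R} w(x)\,\varphi(x)\,dx = \int_\R \Bigl(\int_\R \eta_\eps(x-y)\,{^{\pm}}{\mathcal{D}}{^{\alpha}}u(y)\,dy\Bigr)\varphi(x)\,dx.
\end{align*}
Because ${^{\pm}}{\mathcal{D}}{^{\alpha}}u \in L^1_{loc}(\R)$, $\eta_\eps$ is bounded with compact support, and $\varphi$ is compactly supported, Fubini's theorem applies on a bounded rectangle; using the symmetry $\eta_\eps(x-y) = \eta_\eps(y-x)$ of the standard mollifier I rewrite this as
\begin{align*}
\int_\R {^{\pm}}{\mathcal{D}}{^{\alpha}}u(y) \, (\eta_\eps * \varphi)(y)\,dy.
\end{align*}

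Next, I note that $\psi := \eta_\eps * \varphi \in C_0^\infty(\R)$, since both $\eta_\eps$ and $\varphi$ are smooth with compact support. Thus $\psi$ is an admissible test function in Definition \ref{RWFD}, and the weak derivative identity gives
\begin{align*}
\int_\R {^{\pm}}{\mathcal{D}}{^{\alpha}}u \cdot \psi \,dy
= (-1)^{[\alpha]} \int_\R u(y)\, {^{\mp}}{D}{^{\alpha}}\psi(y)\,dy.
\end{align*}
The crucial step is to commute the classical Liouville derivative with the mollification: I claim that on $\R$,
\begin{align*}
{^{\mp}}{D}{^{\alpha}}(\eta_\eps * \varphi)(y) = (\eta_\eps * {^{\mp}}{D}{^{\alpha}}\varphi)(y) \qquad \forall y \in \R.
\end{align*}
This follows by direct differentiation under the integral sign in Definition \ref{def2.5}: since ${^{\mp}}{D}{^{\alpha}}$ is a translation-invariant convolution operator with the Liouville kernel on $\R$, it commutes with any other convolution; smoothness and compact support of $\eta_\eps$ and $\varphi$ justify every interchange of limit, differentiation, and integral. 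This commutation is the main obstacle and is precisely the step that exploits $\Omega = \R$ (on a finite interval one has to cope with the pollution effect of Proposition \ref{Pollution}, but here no such extension issues arise).

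Combining the two steps, one more application of Fubini yields
\begin{align*}
(-1)^{[\alpha]} \int_\R u(y)\,(\eta_\eps * {^{\mp}}{D}{^{\alpha}}\varphi)(y)\,dy
= (-1)^{[\alpha]} \int_\R u^\eps(x)\, {^{\mp}}{D}{^{\alpha}}\varphi(x)\,dx,
\end{align*}
again using symmetry of $\eta_\eps$ and the integrability of $u \in L^1(\R)$ together with ${^{\mp}}{D}{^{\alpha}}\varphi \in L^\infty(\R)$ (from the theorem in Section \ref{sec-2.7}) to secure absolute convergence. Tracing the chain, I have shown that
\begin{align*}
\int_\R w(x)\,\varphi(x)\,dx = (-1)^{[\alpha]} \int_\R u^\eps(x)\, {^{\mp}}{D}{^{\alpha}}\varphi(x)\,dx
\qquad \forall \varphi \in C_0^\infty(\R),
\end{align*}
which is exactly the defining property of ${^{\pm}}{\mathcal{D}}{^{\alpha}}u^\eps$. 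By uniqueness of weak fractional derivatives, $w = {^{\pm}}{\mathcal{D}}{^{\alpha}}u^\eps$ a.e. in $\R$, establishing \eqref{WeakMollifier2}.
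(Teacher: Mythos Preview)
The paper does not actually give a proof of Lemma \ref{WDMollifierR}; it merely refers the reader to \cite[Lemma 3.3]{Feng_Sutton} for details, so there is no in-text argument to compare against. Your proof is correct and is precisely the standard argument one expects: test against $\varphi\in C^\infty_0(\R)$, pass the mollifier onto $\varphi$ by Fubini and the evenness of $\eta_\eps$, invoke the definition of the weak derivative with the admissible test function $\eta_\eps*\varphi\in C^\infty_0(\R)$, commute the classical Liouville derivative with convolution via translation invariance on $\R$, and undo the Fubini step using $u\in L^1(\R)$ together with ${^{\mp}}{D}{^{\alpha}}\varphi\in L^\infty(\R)$. Each justification you give (compact support for the first Fubini, translation invariance for the commutation, $L^1$--$L^\infty$ pairing for the second Fubini) is the right one, and the appeal to uniqueness of weak fractional derivatives closes the argument cleanly.
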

        
        
        The next theorem gives a characterization of weak fractional derivatives on $\R$. 
        
         \begin{theorem}\label{characterization}
            Suppose $u \in L^{1}(\R)$. Then ${^{\pm}}{\mathcal{D}}{^{\alpha}} u = v \in L^{1}_{loc}(\R)$ exists if and only if there 
            exists a sequence 
              $\left\{u_j \right\}_{j=1}^{\infty} \subset C^{\infty}_{0}(\R)$ such that $u_j \rightarrow u$ in $L^{1}(\R)$ and ${^{\pm}}{\mathcal{D}}{^{\alpha}} u_j \rightarrow v$ in $L^{1}_{loc}(\R)$.
        \end{theorem}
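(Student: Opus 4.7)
The plan is to prove both implications directly, with the main work lying in the forward direction since the infinite-domain setting forces the approximants to be \emph{compactly supported}, not merely smooth. The sufficiency direction is essentially a density argument, while the necessity direction requires combining mollification with a careful truncation that controls the nonlocal pollution tail.

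\textbf{Sufficiency $(\Leftarrow)$.} Given $\{u_j\} \subset C^\infty_0(\R)$ with $u_j \to u$ in $L^1(\R)$ and ${^\pm}\mathcal{D}^\alpha u_j \to v$ in $L^1_{loc}(\R)$, I would pass to the limit in the identity
\begin{align*}
\int_\R u_j(x)\,{^\mp}D^\alpha\tilde\varphi(x)\,dx = (-1)^{[\alpha]} \int_\R {^\pm}\mathcal{D}^\alpha u_j(x)\,\varphi(x)\,dx,
\end{align*}
which holds for every $\varphi \in C^\infty_0(\R)$ because $u_j \in C^\infty_0(\R)$ is Riemann--Liouville differentiable (and Proposition \ref{Weak=RL} applies). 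The left side converges since ${^\mp}D^\alpha\tilde\varphi \in L^\infty(\R)$ (from the earlier theorem on the action of $D^\alpha$ on $C^\infty_0$ functions), so $|\int (u-u_j)\,{^\mp}D^\alpha\tilde\varphi\,dx| \le \|{^\mp}D^\alpha\tilde\varphi\|_{L^\infty}\|u-u_j\|_{L^1} \to 0$. The right side converges to $(-1)^{[\alpha]}\int v\varphi\,dx$ because $\varphi$ has compact support $K$ and the $L^1(K)$-convergence suffices. Uniqueness then identifies $v = {^\pm}\mathcal{D}^\alpha u$.

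\textbf{Necessity $(\Rightarrow)$.} I would construct the sequence by a two-step diagonal argument. First, let $u^\eps := \eta_\eps * u$. Then $u^\eps \in C^\infty(\R) \cap L^1(\R)$, $u^\eps \to u$ in $L^1(\R)$, and by Lemma \ref{WDMollifierR}, ${^\pm}\mathcal{D}^\alpha u^\eps = \eta_\eps * v \to v$ in $L^1_{loc}(\R)$. Second, choose a family of cutoffs $\chi_R \in C^\infty_0(\R)$ with $\chi_R \equiv 1$ on $[-R,R]$, $\supp\chi_R \subset [-R-1,R+1]$, $0 \le \chi_R \le 1$, and set $u^{\eps,R} := \chi_R u^\eps \in C^\infty_0(\R)$. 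Dominated convergence yields $u^{\eps,R} \to u^\eps$ in $L^1(\R)$ as $R \to \infty$. A diagonal selection $u_j := u^{\eps_j,R_j}$ with $\eps_j \to 0$ and $R_j \to \infty$ fast enough then produces the desired sequence, provided we verify ${^\pm}D^\alpha u^{\eps,R} \to {^\pm}\mathcal{D}^\alpha u^\eps$ in $L^1_{loc}(\R)$ as $R \to \infty$.

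\textbf{The main obstacle is the truncation estimate.} Writing ${^\pm}\mathcal{D}^\alpha u^\eps - {^\pm}D^\alpha u^{\eps,R} = {^\pm}\mathcal{D}^\alpha((1-\chi_R)u^\eps)$ and working with the left derivative for concreteness, I want to bound this on a fixed compact set $K \subset [-M,M]$ for $R > M+1$. The key observation is that for $x \in K$ and $y \le x$, the factor $(1-\chi_R(y))$ kills the integrand on $[-R,R]$, so the defining convolution integral is supported in $y \in (-\infty,-R]$, where $x - y \ge R - M$ and no singularity appears. Differentiating under the integral sign then yields
\begin{align*}
\bigl|{^{-}}D^\alpha((1-\chi_R)u^\eps)(x)\bigr| \le \frac{\alpha}{\Gamma(1-\alpha)(R-M)^{\alpha+1}} \|u^\eps\|_{L^1(\R)} \qquad \forall x \in K,
\end{align*}
which tends to zero as $R \to \infty$, giving uniform (and therefore $L^1_{loc}$) convergence. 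The right-derivative case is symmetric. The difficulty, and the heart of the argument, is precisely that the nonlocal kernel creates a tail contribution at every point of $K$; it is saved only by the polynomial decay $(x-y)^{-\alpha-1}$ together with the integrability $u^\eps \in L^1(\R)$ inherited from $u$.
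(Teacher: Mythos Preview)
Your proof is correct and takes a genuinely different route from the paper. In the necessity direction, the paper constructs $u_j := \eta_{1/j} * (\psi_j u)$ (truncate first, then mollify) and controls ${^{\pm}}{\mathcal{D}}{^{\alpha}} u_j$ on a compact set by invoking the product rule with remainder (Theorem~\ref{WeakProductRule}, proved only later in Section~\ref{sec-4.5}) to expand ${^{\pm}}{\mathcal{D}}{^{\alpha}}(\psi_j u)$; the remainder estimate is described as ``rather lengthy'' and is in fact deferred to \cite{Feng_Sutton}. Your approach reverses the order (mollify first to $u^\eps$, then truncate to $\chi_R u^\eps$) and replaces the product-rule machinery with a direct kernel decay estimate: because on $K\subset[-M,M]$ the truncation error ${^{-}}D^\alpha\bigl((1-\chi_R)u^\eps\bigr)(x)$ involves only $y\le -R$, the singularity disappears and the pointwise bound $\alpha\,\Gamma(1-\alpha)^{-1}(R-M)^{-\alpha-1}\|u^\eps\|_{L^1}$ drops out immediately. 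This is more elementary and fully self-contained, avoiding the forward reference; the small price is the two-parameter diagonal selection. The paper's approach, by contrast, uses a single index $j$ but leans on heavier (and later) tools. One detail you glossed over: to equate the weak derivative ${^{\pm}}{\mathcal{D}}{^{\alpha}}\bigl((1-\chi_R)u^\eps\bigr)$ with the classical expression on $K$ you should note that $(1-\chi_R)u^\eps\in C^\infty(\R)\cap L^1(\R)$ has a classical Riemann--Liouville derivative everywhere (split the convolution at a finite point and integrate by parts on the near piece), so Proposition~\ref{Weak=RL} applies; this is routine but worth one line.
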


        \begin{proof}
            {\em Step 1:}  Suppose that there exists $v \in L^{1}_{loc}(\R)$ and $\left\{u_j \right\}_{j=1}^{\infty} \subset C^{\infty}_{0}(\R)$
             such that $u _ j \rightarrow u$ in $L^{1}(\R)$ and ${^{\pm}}{\mathcal{D}}{^{\alpha}}u_j     \rightarrow v$ in $L^{1}_{loc}(\R)$. We want to show $v = {^{\pm}}{\mathcal{D}}{^{\alpha}} u$ almost everywhere. For any $\varphi\in C^\infty_0(\R)$  
            \begin{align*}
                \left| \int_{\R} (u-u_j )(x) {^{\mp}}{D}{^{\alpha}}\varphi(x) \,dx \right| &\leq \int_{\R} \left| (u - u_j)(x) \right| \left| {^{\mp}}{D}{^{\alpha}}\varphi (x) \right|\,dx\\
                &\leq \|u - u_j \|_{L^{1}(\R)} \left\|{^{\mp}}{D}{^{\alpha}} \varphi \right\|_{L^{\infty}(\R)} \to 0   
            \end{align*} 
       for $j\to \infty$ and for $K : = \supp(\varphi)$
            \begin{align*}
                \left| \int_{\R} \left(v - {^{\pm}}{\mathcal{D}}{^{\alpha}} u_j\right)(x) \varphi(x)\,dx  \right| &\leq \int_{\R} \left|\left(v - {^{\pm}}{\mathcal{D}}{^{\alpha}} u_j\right)(x)\right| |\varphi(x)|\,dx\\
                &= \int_{K} \left| \left(v - {^{\pm}}{\mathcal{D}}{^{\alpha}} u_j \right) (x) \right| \left|\varphi(x) \right| \,dx\\
                &\leq \left\| v - {^{\pm}}{\mathcal{D}}{^{\alpha}}u_{j}\right\|_{L^{1}(K)} \| \varphi \|_{L^{\infty}(\R)} \to 0   
            \end{align*}
         for $j\to \infty$. 
         From these inequalities and the definition of weak derivatives we get
            \begin{align*}
                (-1)^{[\alpha]} \int_{\R} u(x) {^{\mp}}{D}{^{\alpha}} \varphi (x)\,dx &= \lim_{j \rightarrow \infty} (-1)^{[\alpha]} \int_{\R} u_j(x) {^{\mp}}{D}{^{\alpha}}\varphi(x)\,dx \\ 
                &=\lim_{j \rightarrow \infty} \int_{\R} {^{\pm}}{\mathcal{D}}{^{\alpha}} u_j (x)  \varphi(x) \,dx 
                = \int_{\R} v(x) \varphi(x)\,dx. 
            \end{align*}
            By the uniqueness of the weak derivative, we deduce $v = {^{\pm}}{\mathcal{D}}{^{\alpha}} u$ almost everywhere.
            
            \smallskip
            {\em Step 2:} Suppose that $u \in L^{1}(\R)$ and $v := {^{\pm}}{\mathcal{D}}{^{\alpha}} u 
            \in L^{1}_{loc}(\R)$. We want to show that there exists $\left\{ u_j 
             \right\}_{j=1}^{\infty} \subset C^{\infty}_{0}(\R)$ such that $u_j \rightarrow u$ in $L^{1}(\R)$
             and ${^{\pm}}{\mathcal{D}}{^{\alpha}} u_j \rightarrow v$ in $L^{1}_{loc}(\R)$. 
            To the end, let $\psi \in C^{\infty}(\R)$ satisfy $\psi(t) =1$ if $t \leq 0$ and $\psi (t) = 0$ if 
            $t \geq 1$. For $j =1 ,2,3,...$ let $\psi_{j} \in C^{\infty}_{0}(\R)$ be defined by 
            $\psi_{j}(x) : = \psi(|x| - j)$. Let $u_j : =\eta_{\frac{1}{j}} * (\psi_{j}u)$. Then $u_j\in C^\infty_0(\R)$ and $u_j \rightarrow u$ in $L^{1}(\R)$ as $j \rightarrow \infty$.
            We also claim that ${^{\pm}}{\mathcal{D}}{^{\alpha}} u_j \rightarrow v$ in $L^{1}_{loc}(\R)$  as
            $j\to \infty$ and prove this conclusion below in the subsequent corollary.
       \end{proof}
   
   \medskip
   \begin{corollary}\label{corollary4.8}
   	Suppose $u \in L^{p}(\R)$ for $1 \leq p < \infty$. Then $v := {^{\pm}}{\mathcal{D}}{^{\alpha}} u \in L^{q}_{loc}(\R)$
   	for $1\leq q <\infty$ 
   	if and only if there exists $\left\{u_j \right\}_{j=1}^{\infty} \subset C^{\infty}_{0}(\R)$ such that $u_j \rightarrow u$ in $L^{p}(\R)$ and ${^{\pm}}{\mathcal{D}}{^{\alpha}} u_j \rightarrow v$ in $L^{q}_{loc}(\R)$.
   \end{corollary}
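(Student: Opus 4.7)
My plan is to follow the two-step template already established in the proof of Theorem \ref{characterization}, upgrading $L^1$ to $L^p$ and $L^1_{loc}$ to $L^q_{loc}$ in the obvious places, while also fleshing out the deferred claim about the specific approximating sequence $u_j:=\eta_{1/j}*(\psi_j u)$.

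For the sufficiency direction (approximation implies weak differentiability), I would start from a sequence $\{u_j\}\subset C^\infty_0(\R)$ with $u_j\to u$ in $L^p(\R)$ and ${^{\pm}}{\mathcal{D}}{^{\alpha}} u_j\to v$ in $L^q_{loc}(\R)$. For any $\varphi\in C^\infty_0(\R)$, set $K=\supp(\varphi)$. The earlier theorem on ${^{\pm}}D^\alpha\varphi\in L^r(\R)$ for every $1\le r\le\infty$ lets me apply H\"older with exponent $p'$ to bound $\bigl|\int(u-u_j){^{\mp}}D^\alpha\varphi\bigr|\le\|u-u_j\|_{L^p}\|{^{\mp}}D^\alpha\varphi\|_{L^{p'}}\to0$, while the local convergence gives $\bigl|\int_K({^{\pm}}{\mathcal{D}}{^{\alpha}}u_j-v)\varphi\bigr|\le\|{^{\pm}}{\mathcal{D}}{^{\alpha}}u_j-v\|_{L^q(K)}\|\varphi\|_{L^{q'}(K)}\to0$. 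Passing to the limit in the weak-derivative identity for $u_j$ and invoking uniqueness of weak fractional derivatives identifies $v={^{\pm}}{\mathcal{D}}{^{\alpha}}u$ a.e.

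For the necessity direction, I would take the construction from the proof of Theorem \ref{characterization}: pick $\psi\in C^\infty(\R)$ with $\psi\equiv1$ on $(-\infty,0]$ and $\psi\equiv0$ on $[1,\infty)$, set $\psi_j(x):=\psi(|x|-j)\in C^\infty_0(\R)$, and let $u_j:=\eta_{1/j}*(\psi_j u)\in C^\infty_0(\R)$. The $L^p$-convergence $u_j\to u$ is routine: split
\begin{align*}
\|u_j-u\|_{L^p(\R)}\le\|\eta_{1/j}*(\psi_j u-u)\|_{L^p(\R)}+\|\eta_{1/j}*u-u\|_{L^p(\R)},
\end{align*}
use Young's inequality (with $\|\eta_{1/j}\|_{L^1}=1$) on the first term and dominated convergence ($|\psi_j u|\le|u|$, $\psi_j\to1$ pointwise) to kill it, and use the standard mollifier property on the second.

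The main obstacle, and the one that was deferred in the previous proof, is establishing ${^{\pm}}{\mathcal{D}}{^{\alpha}} u_j\to v$ in $L^q_{loc}(\R)$. By Lemma \ref{WDMollifierR} applied to $\psi_j u$ this reduces to showing $\eta_{1/j}*{^{\pm}}{\mathcal{D}}{^{\alpha}}(\psi_j u)\to v$ locally in $L^q$, which in turn needs me to first certify that $\psi_j u$ has a weak fractional derivative and then bound the difference ${^{\pm}}{\mathcal{D}}{^{\alpha}}(\psi_j u)-v$ on a fixed compact set $K\subset\R$. The clean way to do this is to write $\psi_j u=u-(1-\psi_j)u$ and show that $(1-\psi_j)u\in L^p(\R)$ has a weak fractional derivative whose restriction to $K$ tends to zero in $L^q(K)$. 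Taking the left case for concreteness, for $x\in K\subset(-R,R)$ and $j>R+1$ the convolution kernel for ${^{-}}D^\alpha$ is integrated against $(1-\psi_j)u$, which is supported in $\{|y|\ge j\}$; the resulting integrand carries a factor $(x-y)^{-\alpha}$ (or its derivative) with $|x-y|\ge j-R$, and after applying H\"older in $y$ one gets a bound of the form $C(j-R)^{-\beta}\|u\|_{L^p(\{|y|\ge j\})}$ with $\beta>0$, which decays to zero. Combining this decay estimate with the standard mollifier convergence $\eta_{1/j}*v\to v$ in $L^q_{loc}(\R)$ finishes the argument. The delicate point throughout is not losing integrability when estimating the nonlocal kernel against a cutoff $u$; careful separation of the near-field (where $\psi_j\equiv1$) from the far-field (where $\psi_j$ is transitioning to zero) is what makes the decay work.
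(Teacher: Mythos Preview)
Your Step 1 matches the paper's exactly (with the obvious $L^p/L^{p'}$ and $L^q/L^{q'}$ H\"older upgrades). Your Step 2 construction also matches, but your estimation strategy for ${^{\pm}}{\mathcal{D}}{^{\alpha}} u_j \to v$ is genuinely different from the paper's, and there is a subtle gap in how you have organized it.

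The paper invokes the product rule with remainder (Theorem \ref{WeakProductRule}) on the finite interval $K_j = \supp(\psi_j)$ to expand
\[
{^{\pm}}{\mathcal{D}}{^{\alpha}}(\psi_j u) = \psi_j\, {^{\pm}}{\mathcal{D}}{^{\alpha}} u + \sum_{k=1}^m C_k\, {^{\pm}}{I}{^{k-\alpha}} u \, D^k\psi_j + {^{\pm}}{R}{^\alpha_m}(u,\psi_j),
\]
and then bounds each piece in $L^q(K)$ separately (the remainder estimate is lengthy and deferred to \cite{Feng_Sutton}). This yields both the existence of ${^{\pm}}{\mathcal{D}}{^{\alpha}}(\psi_j u)$ and the convergence in one stroke.

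Your kernel-decay decomposition $\psi_j u = u - (1-\psi_j)u$ avoids the product rule for the \emph{estimate}, but as written it has a circularity: you apply Lemma \ref{WDMollifierR} to $\psi_j u$, which presupposes that ${^{\pm}}{\mathcal{D}}{^{\alpha}}(\psi_j u)$ exists globally in $L^1_{loc}(\R)$. Your kernel argument only shows that the \emph{classical} ${^{\pm}}D^\alpha\bigl((1-\psi_j)u\bigr)$ exists and is small on the fixed compact $K$; it says nothing about points in the transition region $j\le |x|\le j+1$, so global existence of the weak derivative of $(1-\psi_j)u$ (and hence of $\psi_j u$ by linearity) is not actually certified.

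The fix is a simple reordering that makes your approach fully rigorous and product-rule-free: write, for $x\in K$,
\[
{^{\pm}}D^\alpha u_j = \bigl({^{\pm}}D^\alpha \eta_{1/j}\bigr) * (\psi_j u)
= \bigl({^{\pm}}D^\alpha \eta_{1/j}\bigr) * u - \bigl({^{\pm}}D^\alpha \eta_{1/j}\bigr) * \bigl((1-\psi_j)u\bigr),
\]
which is a pointwise identity since $u_j\in C^\infty_0$. The first term equals $\eta_{1/j}*v$ by Lemma \ref{WDMollifierR} applied to $u$ (whose weak derivative \emph{is} given) and converges to $v$ in $L^q_{loc}$. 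The second term is precisely your far-field integral, and your H\"older/kernel-decay bound $C(j-R)^{-\beta}\|u\|_{L^p(\{|y|\ge j\})}$ kills it uniformly on $K$. This version bypasses Theorem \ref{WeakProductRule} entirely and is arguably cleaner than the paper's three-term expansion with its deferred remainder estimate.
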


          \begin{proof}
            {\em Step 1:}  Same as {\em Step 1} of the proof of Theorem \ref{characterization}. 
            
            {\em Step 2:}  Suppose that $u \in L^{p}(\R)$ for $1 \leq p < \infty$ and $v := {^{\pm}}{\mathcal{D}}{^{\alpha}} u 
            \in L^{q}_{loc}(\R)$. Let $\{u_j\}_{j=1}^{\infty}$ be the same as in the proof of Theorem \ref{characterization} and $\eps>0$. We now want to show that ${^{\pm}}{\mathcal{D}}{^{\alpha}} u_j 
            \rightarrow v$ in $L^{q}_{loc}(\R)$ as $j\to \infty$.

            By the assumption, we have $v={^{\pm}}{\mathcal{D}}{^{\alpha}} u \in L^{q}_{loc}(\R)$. 
            For any fixed compact subset $K \subset \R$, choose $a,b \in \R$ such that $K \subset (a , b)$ finite. 
            By the construction of $u_j$, we have ${^{\pm}}{\mathcal{D}}{^{\alpha}} u_j
            =\eta_{\frac{1}{j}} * {^{\pm}}{\mathcal{D}}{^{\alpha}}(\psi_{j} u)$. 
            Let $K_j : = \text{supp}(\psi_j)$ and for every $\varphi\in C^{\infty}_{0}(\R)$ with 
            $\text{supp}(\varphi) \subset K_j$ we have  
            \begin{align*}
                \int_{K_j} {^{\pm}}{\mathcal{D}}{^{\alpha}}(\psi_j u)(x) \varphi(x)\,dx &= \int_{\R} {^{\pm}}{\mathcal{D}}{^{\alpha}}(\psi_j u)(x) \varphi(x)\,dx \\ 
                &=\int_{\R} (\psi_j u)(x) {^{\mp}}{D}{^{\alpha}} \varphi(x)\,dx 
                = \int_{K_j} (\psi_ju)(x) {^{\mp}}{D}{^{\alpha}} \varphi(x)\,dx. 
            \end{align*}
            Hence, ${^{\pm}}{\mathcal{D}}{^{\alpha}} (\psi_j u)$ can be regarded as the weak 
            fractional derivative of $\psi_j u$ over the domain $K_j$. It is due to this fact that we could 
            use the product rule with remainder for fractional weak derivatives (to be proved  in Theorem \ref{WeakProductRule})
            to get  
            \begin{align*}
                {^{\pm}}{\mathcal{D}}{^{\alpha}} (\psi_j u ) (x) = \psi_j (x) {^{\pm}}{\mathcal{D}}{^{\alpha}} u(x) + \sum_{k=1}^{m} C_{k} {^{\pm}}{I}{^{k-\alpha}} u(x) D^{k}\psi_{j}(x) + {^{\pm}}{R}{^{\alpha}_{m}}(u,\psi_j)(x). 
            \end{align*}
            Therefore, 
            \begin{align*}
                & \Bigl\| {^{\pm}}{\mathcal{D}}{^{\alpha}} u - {^{\pm}}{\mathcal{D}}{^{\alpha}} u_j\Bigr\|_{L^{q}(K)} 
                =\Bigl\| {^{\pm}}{\mathcal{D}}{^{\alpha}} u - \eta_{\frac{1}{j}} * {^{\pm}}{\mathcal{D}}{^{\alpha}} (\psi_{j} ,u)\Bigr\|_{L^{q}(K)}\\ 
                &\qquad =\Bigl\|{^{\pm}}{\mathcal{D}}{^{\alpha}} u - \eta_{\frac{1}{j}} * \Bigl( \psi_j {^{\pm}}{\mathcal{D}}{^{\alpha}} u + \sum_{k=1}^{m} C_{k} {^{\pm}}{I}{^{\alpha}} u D^{k} \psi_{j} + {^{\pm}}{R}{^{\alpha}_{m}} (u,\psi_j)\Bigr) \Bigr\|_{L^{q}(K)}\\
                &\qquad \leq \Bigl\|{^{\pm}}{\mathcal{D}}{^{\alpha}} u - \eta_{\frac{1}{j}} * \psi_{j} {^{\pm}}{\mathcal{D}}{^{\alpha}} u \Bigr\|_{L^{q}(K)} + \Bigl\| \eta_{\frac{1}{j}} * \sum_{k=1}^{m} C_{k} {^{\pm}}{I}{^{\alpha}} u D^{k} \psi_{j} \Bigr\|_{L^{q}(K)} \\
                &\hskip 1.0in + \Bigl\| {^{\pm}}{R}{^{\alpha}_{m}} (u,\psi_j) \Bigr\|_{L^{q}(K)}\\
                &\qquad  \leq \Bigl\|{^{\pm}}{\mathcal{D}}{^{\alpha}} u - \eta_{\frac{1}{j}} * \psi_{j} {^{\pm}}{\mathcal{D}}{^{\alpha}} u \Bigr\|_{L^{q}(K)} + \sum_{k=1}^{m} \Bigl\| \eta_{\frac{1}{j}} * C_{k} {^{\pm}}{I}{^{\alpha}} u D^{k} \psi_{j} \Bigr\|_{L^{q}(K)} \\
                &\hskip 1.0in + \left\| {^{\pm}}{R}{^{\alpha}_{m}} (u,\psi_j) \right\|_{L^{q}(K)}.
            \end{align*}
            Then it suffices to show that each of the above three terms vanishes as $j\to \infty$. 
            
            Since ${^{\pm}}{\mathcal{D}}{^{\alpha}} u \in L^{q}(K)$, by the same arguments used 
            to show that $u_j \rightarrow u$ in $L^{p}(\R)$, we have that $\eta_{\frac{1}{j}} * \psi_j {^{\pm}}{\mathcal{D}}{^{\alpha}} u \rightarrow {^{\pm}}{\mathcal{D}}{^{\alpha}} u$ in $L^{q}(K)$. Hence, there exists $J_{1} \in \N$ such that for every $j\geq J_{1}$, we have that 
            \begin{align*}
                \left\| {^{\pm}}{\mathcal{D}}{^{\alpha}} u - \eta_{\frac{1}{j}} * \psi_{j} {^{\pm}}{\mathcal{D}}{^{\alpha}} u \right\|_{L^{q}(K)} < \dfrac{\eps}{2}. 
            \end{align*}

            Next, by construction, for set $K$, there exists $J_{2}:= J_{2}(K) \in \N$ so that for every $j\geq J_{2}$, $D^{k} \psi_j(x) = 0$ for every $x \in (a,b)$. Therefore, for every $j \geq J_{2},$
            \begin{align*}
                \left\|\eta_{\frac{1}{j}} * C_{k} {^{\pm}}{I}{^{k-\alpha}} u D^{k}\psi_j \right\|_{L^{q}(K)} 
                \leq \left\|C_{k} {^{\pm}}{I}{^{k-\alpha}} u D^{k} \psi_j\right\|_{L^{q}((a,b))} = 0.
            \end{align*}
            Here we have used  the fact that for each $k$ and $j \geq J_{2}$, ${^{\pm}}{I}{^{k-\alpha}} u$ is finite on $(a,b)$. This of course is true since $u \in L^{p}(\R)$. In fact, we need only that ${^{\pm}}{I}{^{k-\alpha}} u$ is finite on $(a,b)$ for $j = J_{2}$ since for all $j \geq J_{2}$, $D^{k} \psi_{j} \equiv 0$ in $(a,b)$. 

            Finally, it can be shown that the remainder term vanishes as 
            $j\rightarrow \infty$. However, since the argument is rather lengthy, 
            	we omit it to save space and refer the reader  
            	to \cite[Corollary 4.2]{Feng_Sutton} for a complete argument.  
        \end{proof}

    \begin{remark}
    	The conclusion of the above corollary still holds if $L^q_{loc}(\R)$ is replaced by $L^q(\R)$ in the statement. 
    \end{remark}

    \subsection{Basic Properties of Weak Fractional Derivatives}\label{sec-4.4}  
    Similar to the classical calculus theory, we expect weak derivatives 
    to satisfy certain properties and rules 
    of calculus. As in the classical fractional calculus, many of the rules in the weak fractional calculus theory differ 
    from their integer counterparts, which is expected. Below we list a few elementary properties for weak fractional derivatives.
    
    \begin{proposition} \label{properties}
    	Let  $\alpha , \beta >0$, $\lambda ,\mu \in \R$, and $u,v$ be weakly differentiable to the appropriate order. Then the following properties hold.  
    \begin{itemize}
    \item[{\rm(i)}] Linearity: ${^{\pm}}{\mathcal{D}}{^{\alpha}}(\lambda u + \mu v) = \lambda {^{\pm}}{\mathcal{D}}{^{\alpha}} u + \mu {^{\pm}}{\mathcal{D}}{^{\alpha}} v$.
    \item[{\rm (ii)}] Inclusivity: Let $0 < \alpha < \beta < 1$, suppose that $u$ is $\beta$ order weakly differentiable. Then $u$ is $\alpha$ order weakly differentiable.    
     \item[{\rm (iii)}] Semigroup:  suppose $ 0<\alpha, \beta, \alpha+\beta<1$ and 
    ${^{\pm}}{\mathcal{D}}{^{\alpha}} u, {^{\pm}}{\mathcal{D}}{^{\beta}} u, {^{\pm}}{\mathcal{D}}{^{\alpha+\beta}} u\in L^1(\Omega)$, then ${^{\pm}}{\mathcal{D}}{^{\alpha}} {^{\pm}}{\mathcal{D}}{^{\beta}} u = {^{\pm}}{\mathcal{D}}{^{\alpha+\beta}} u$. Moreover, if $\alpha>1$, then  ${^{\pm}}{\mathcal{D}}{^{\alpha}} u
    ={^{\pm}}{\mathcal{D}}{^{[\alpha]+\sigma}} u=  {\mathcal{D}}{^{[\alpha]}} ({^{\pm}}{\mathcal{D}}{^{\sigma}} u)$ with $\sigma:=\alpha- [\alpha]$.
    \item[{\rm (iv)}] Consistency: if $u$ is first-order weakly differentiable, then the $\alpha\, (<1)$ order weak derivative coincides with the first-order weak derivative in the limit as $\alpha\to 1$.
  
    \end{itemize}
    \end{proposition}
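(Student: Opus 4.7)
The plan is to handle the four parts separately, with (i) and (iv) essentially bookkeeping and (ii) and (iii) reduced to the smooth case through the approximation characterizations of Theorem~\ref{characterization} (and Corollary~\ref{corollary4.8}). For (i), I substitute $\lambda u+\mu v$ into the defining identity in Definition~\ref{RWFD}, use linearity of the Lebesgue integral on both sides, and read off that $\lambda{^{\pm}}{\mathcal{D}}{^{\alpha}} u+\mu{^{\pm}}{\mathcal{D}}{^{\alpha}} v$ satisfies the defining identity for ${^{\pm}}{\mathcal{D}}{^{\alpha}}(\lambda u+\mu v)$; uniqueness then closes the argument. For (iv), the convergence of the $\alpha$-order weak derivative to the $n$-th integer-order weak derivative as $\alpha\to n$ was already established in the proposition preceding Subsection~\ref{sec-4.2}, so the case $n=1$ is exactly what is needed.

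For (ii) and (iii) the common strategy is to pick a sequence $\{u_j\}\subset C^{\infty}(\Omega)$ (or $C^{\infty}_0(\R)$ in the whole-line case) with $u_j\to u$ in $L^1(\Omega)$ and the relevant weak derivatives of $u_j$ converging in $L^1_{loc}(\Omega)$, as supplied by the approximation theorems. On smooth $u_j$ the Riemann--Liouville operators obey the composition identity ${^{\pm}}{D}{^{\alpha}} u_j = {^{\pm}}{I}{^{\beta-\alpha}}\bigl({^{\pm}}{D}{^{\beta}} u_j\bigr)$ and the semigroup identity ${^{\pm}}{D}{^{\alpha+\beta}} u_j = {^{\pm}}{D}{^{\alpha}}\,{^{\pm}}{D}{^{\beta}} u_j$, both read off from Lemma~\ref{lemma3.1} and Lemma~\ref{lem3.4}. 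For (ii), I set $w := {^{\pm}}{I}{^{\beta-\alpha}}\bigl({^{\pm}}{\mathcal{D}}{^{\beta}} u\bigr)$, note via $L^1_{loc}$-continuity of the Riemann--Liouville integral operator that ${^{\pm}}{D}{^{\alpha}} u_j\to w$ in $L^1_{loc}(\Omega)$, and invoke the characterization theorem to identify $w$ as ${^{\pm}}{\mathcal{D}}{^{\alpha}} u$. For (iii), I test the classical semigroup identity for $u_j$ against $\varphi\in C^{\infty}_0(\Omega)$, transfer all derivatives onto $\tilde\varphi$ via iterated integration by parts, and pass to the limit using $u_j\to u$ in $L^1$ together with the boundedness of ${^{\mp}}{D}{^{\alpha+\beta}}\tilde\varphi$ on compact sets. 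The extension to $\alpha>1$ through $\alpha=[\alpha]+\sigma$ is obtained by iterating the $0<\alpha<1$ case with the integer-order composition rule.

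The main obstacle lies in (iii). The tempting direct argument — substituting ${^{\mp}}{D}{^{\beta}}\tilde\varphi$ as a test function into the definition of ${^{\pm}}{\mathcal{D}}{^{\alpha}} u$ — is blocked by Proposition~\ref{Pollution}: the pollution effect means ${^{\mp}}{D}{^{\beta}}\tilde\varphi$ is not compactly supported and is therefore inadmissible as a test function. This forces the detour through approximation and creates the genuinely delicate requirement of arranging a \emph{single} sequence $\{u_j\}$ along which all three weak derivatives (of orders $\alpha$, $\beta$, and $\alpha+\beta$) converge simultaneously in $L^1_{loc}$. This is precisely the point at which the global integrability hypothesis ${^{\pm}}{\mathcal{D}}{^{\alpha}} u,\,{^{\pm}}{\mathcal{D}}{^{\beta}} u,\,{^{\pm}}{\mathcal{D}}{^{\alpha+\beta}} u\in L^1(\Omega)$ enters, permitting uniform control of the cut-off sequence $\psi_j u$ from Theorem~\ref{characterization} and the associated remainder terms.
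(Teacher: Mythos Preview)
Parts (i) and (iv) are fine and match the paper. Part (iii) works in spirit, but you have over-engineered it: the paper avoids the ``simultaneous three-derivative convergence'' problem entirely. Since ${^{\pm}}{\mathcal{D}}{^{\alpha}} u\in L^1(\Omega)$ by hypothesis, the \emph{definition} of ${^{\pm}}{\mathcal{D}}{^{\beta}}\bigl({^{\pm}}{\mathcal{D}}{^{\alpha}} u\bigr)$ already gives
\[
\int_\Omega {^{\pm}}{\mathcal{D}}{^{\beta}}{^{\pm}}{\mathcal{D}}{^{\alpha}} u\,\varphi\,dx
=\int_\Omega {^{\pm}}{\mathcal{D}}{^{\alpha}} u\,{^{\mp}}{D}{^{\beta}}\varphi\,dx,
\]
and now one only needs a sequence $u_j\to u$ in $L^1$ with ${^{\pm}}{D}{^{\alpha}}u_j\to{^{\pm}}{\mathcal{D}}{^{\alpha}} u$ in $L^1$ to pass ${^{\pm}}{D}{^{\alpha}}$ back onto $\varphi$ via classical integration by parts on the smooth $u_j$; the semigroup ${^{\mp}}{D}{^{\alpha}}{^{\mp}}{D}{^{\beta}}\varphi={^{\mp}}{D}{^{\alpha+\beta}}\varphi$ for the test function does the rest. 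No triple convergence is needed.

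Part (ii) has a genuine gap. Your key step is the claim that ${^{\pm}}{I}{^{\beta-\alpha}}$ is continuous on $L^1_{loc}(\Omega)$, so that ${^{\pm}}{D}{^{\alpha}}u_j={^{\pm}}{I}{^{\beta-\alpha}}\bigl({^{\pm}}{D}{^{\beta}}u_j\bigr)\to{^{\pm}}{I}{^{\beta-\alpha}}\bigl({^{\pm}}{\mathcal{D}}{^{\beta}}u\bigr)$ in $L^1_{loc}$. This continuity fails: for $x$ in a compact set $K\subset(a,b)$, the left integral ${_a}{I}{^{\beta-\alpha}_x}f(x)$ integrates $f$ all the way down to the endpoint $a$, so control of $f_n-f$ only on compacta does not bound ${_a}{I}{^{\beta-\alpha}_x}(f_n-f)$ on $K$; on $\R$ the situation is worse still. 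The approximation theorem only supplies ${^{\pm}}{D}{^{\beta}}u_j\to{^{\pm}}{\mathcal{D}}{^{\beta}}u$ in $L^1_{loc}$, and this does not propagate through ${^{\pm}}{I}{^{\beta-\alpha}}$. The paper sidesteps this entirely: it \emph{postpones} (ii) until after the Fundamental Theorem of Weak Fractional Calculus (Theorem~\ref{FTWFC}), then writes $u=c^{1-\beta}_{\pm}\kappa^{\beta}_{\pm}+{^{\pm}}{I}{^{\beta}}{^{\pm}}{\mathcal{D}}{^{\beta}}u$ and simply tests this explicit representation against ${^{\mp}}{D}{^{\alpha}}\varphi$, computing the classical $\alpha$-derivative of each term directly. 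No limiting argument through the integral operator is needed at all.
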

    
    \begin{proof}
    (i) 
	It follows straightforwardly from a direction computation. 
	
	(ii) We shall postpone this proof until after the Fundamental Theorem of Weak Fractional Calculus (cf. Theorem \ref{WeakFTFC}) is established.
	
	(iii) If $0<\alpha,\beta, \alpha+\beta<1$, by the definition we have 
		\begin{align}\label{ee1}
		\int_{\Omega} {^{\pm}}{\mathcal{D}}{^{\alpha+\beta}} u\, \varphi\,dx
		&=  \int_{\Omega} u {^{\mp}}{D}{^{\alpha + \beta}} \varphi\,dx \qquad\forall \varphi\in C^\infty_0(\Omega), \\
		\int_{\Omega} {^{\pm}}{\mathcal{D}}{^{\beta}} {^{\pm}}{\mathcal{D}}{^{\alpha}} u\, \varphi \,dx 
		&=  \int_{\Omega} {^{\pm}}{\mathcal{D}}{^{\alpha}} u {^{\mp}}{D}{^{\beta}} \varphi\,dx  \qquad\forall \varphi\in C^\infty_0(\Omega).\label{ee2} 
		\end{align}
		Let $\{u_j\}_{j=1}^{\infty} \subset C^\infty (\Omega)$ such that $u_j\to u$ in $L^1(\Omega)$ and ${^{\pm}}{D}{^{\alpha}}u_j \to {^{\pm}}{D}{^{\alpha}}u$ in $L^1(\Omega)$, then using the integration by parts formula for Riemann-Liouville fractional order derivatives, we obtain 
		\begin{align}\label{ee3}
		\int_{\Omega} {^{\pm}}{\mathcal{D}}{^{\alpha}} u {^{\mp}}{D}{^{\beta}} \varphi\,dx
		&= \lim_{j \rightarrow \infty} \int_{\Omega} {^{\pm}}{D}{^{\alpha}}u_j {^{\mp}}{D}{^{\beta}} \varphi\,dx\\
		&= \lim_{j \rightarrow \infty}  \int_{\Omega}  u_j {^{\mp}}{D}{^{\alpha}}{^{\mp}}{D}{^{\beta}} \varphi\,dx \nonumber\\
		&= \lim_{j \rightarrow \infty}  \int_{\Omega}  u_j {^{\mp}}{D}{^{\alpha+\beta}}  \varphi\,dx
		=   \int_{\Omega}  u {^{\mp}}{D}{^{\alpha+\beta}}  \varphi\,dx. \nonumber
 		\end{align}
 		Combining \eqref{ee1}--\eqref{ee3} we get 
 		\[
 		\int_{\Omega} {^{\pm}}{\mathcal{D}}{^{\alpha+\beta}} u \varphi\,dx
 		= \int_{\Omega} {^{\pm}}{\mathcal{D}}{^{\beta}} {^{\pm}}{\mathcal{D}}{^{\alpha}} u \varphi\,dx
 		\qquad\forall \varphi\in C^\infty_0(\Omega).
 		\]
 		Thus, ${^{\pm}}{\mathcal{D}}{^{\alpha+\beta}} u=  {^{\pm}}{\mathcal{D}}{^{\beta}} {^{\pm}}{\mathcal{D}}{^{\alpha}} u$ almost everywhere in $\Omega$. 
 		
 		If $\alpha>1$, set $m=[\alpha]$ and $\sigma=\alpha-m$. By the definition we get for any $\varphi\in C^\infty_0(\Omega)$,
 		\begin{align*}
 		\int_{\Omega} {\mathcal{D}}{^{[\alpha]}} ({^{\pm}}{\mathcal{D}}{^{\sigma}} u) \varphi\,dx
 		&=(-1)^{[\alpha]} \int_{\Omega}   {^{\pm}}{\mathcal{D}}{^{\sigma}} u \, {\mathcal{D}}{^{[\alpha]}} \varphi\,dx
 		=(-1)^{[\alpha]} \int_{\Omega}     u  \, {^{\mp}}{\mathcal{D}}{^{\sigma}}  {\mathcal{D}}{^{[\alpha]}} \varphi\,dx \\
 		&=(-1)^{[\alpha]} \int_{\Omega}     u  \, {^{\mp}}{\mathcal{D}}{^{\sigma +[\alpha]}} \varphi\,dx 
 		=(-1)^{[\alpha]} \int_{\Omega}  u  \, {^{\mp}}{\mathcal{D}}{^\alpha} \varphi\,dx.
 		\end{align*}
 		Thus, ${^{\pm}}{\mathcal{D}}{^{\alpha}} u
 		 =  {\mathcal{D}}{^{[\alpha]}} ({^{\pm}}{\mathcal{D}}{^{\sigma}} u)$ almost everywhere in $\Omega$ and
        the assertion (iii) is proved. 
 
	  (iv)  It follows by the consistency of the classical fractional derivatives that for every $\varphi \in C^{\infty}(\Omega)$, 
		\begin{align*}
			\lim_{\alpha \rightarrow 1} \int_{\Omega} {^{\pm}}{\mathcal{D}}{^{\alpha}} u\, \varphi\,dx &:= \lim_{\alpha \rightarrow 1} (-1)^{[\alpha]} \int_{\Omega} u {^{\mp}}{D}{^{\alpha}} \tilde{\varphi} \,dx
			 = - \int_{\Omega} u D\tilde{\varphi} \,dx
			=:  \int_{\Omega} \mathcal{D} u\, \varphi \,dx
		\end{align*}

    \end{proof}
    
    \begin{remark}
    	 We note that for $\alpha>1$,  generally, ${^{\pm}}{\mathcal{D}}{^{\alpha}} u
    	 \neq  {^{\pm}}{\mathcal{D}}{^{\sigma}}  {\mathcal{D}}{^{[\alpha]}}  u$, consequently, 
    	 ${^{\pm}}{\mathcal{D}}{^{\sigma}}  {\mathcal{D}}{^{[\alpha]}}  u  \neq {\mathcal{D}}{^{[\alpha]}} {^{\pm}}{\mathcal{D}}{^{\sigma}}   u$, 
    	  in general.
    \end{remark}
    
    We conclude this section by stating a general integration by parts formula in the case $\Ome=\R$. 
    
    \begin{proposition}
    	Let $\alpha>0$, $1\leq p_k\leq \infty$ and $q_k=\frac{p_k}{p_k-1}$ for $k=1,2$. Suppose that $u \in L^{p_1}(\R)$, $v\in L^{p_2}(\R)$,  ${^{\pm}}{\mathcal{D}}{^{\alpha}} u \in L^{q_2}(\R)$, and 
    		$ {^{\mp}}{\mathcal{D}}{^{\alpha}} v \in L^{q_1}(\R)$. Then there holds 
    	 \begin{align}\label{integration_by_parts} 
    	\int_{\R} {^{\pm}}{ \mathcal{\mathcal{D} }}{^{\alpha}} u\, v\,dx  
    	= (-1)^{[\alpha]} \int_{\R} u\, {^{\mp}}{\mathcal{D}}{^{\alpha}} v\,dx . 
    	\end{align}
    \end{proposition}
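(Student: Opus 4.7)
The plan is to reduce the identity to the defining integration-by-parts relation for weak fractional derivatives by approximating $v$ by smooth compactly supported test functions and passing to the limit via H\"older's inequality.

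First, since $v \in L^{p_2}(\R)$ and ${^{\mp}}{\mathcal{D}}{^{\alpha}} v \in L^{q_1}(\R)$, by Corollary \ref{corollary4.8} together with the subsequent remark (which upgrades $L^q_{loc}(\R)$ to $L^q(\R)$), there exists a sequence $\{v_k\}_{k=1}^\infty \subset C^\infty_0(\R)$ such that
\[
v_k \to v \text{ in } L^{p_2}(\R), \qquad {^{\mp}}{\mathcal{D}}{^{\alpha}} v_k \to {^{\mp}}{\mathcal{D}}{^{\alpha}} v \text{ in } L^{q_1}(\R).
\]
Because $v_k \in C^\infty_0(\R)$ is smooth, Proposition \ref{Weak=RL} guarantees ${^{\mp}}{D}{^{\alpha}} v_k = {^{\mp}}{\mathcal{D}}{^{\alpha}} v_k$ almost everywhere, so the classical and weak fractional derivatives coincide on the approximating sequence.

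Next, for every fixed $k$ the function $v_k$ is a legitimate test function in the definition of ${^{\pm}}{\mathcal{D}}{^{\alpha}} u$ on $\Omega = \R$, hence
\[
\int_{\R} {^{\pm}}{\mathcal{D}}{^{\alpha}} u \cdot v_k\, dx = (-1)^{[\alpha]} \int_{\R} u \cdot {^{\mp}}{D}{^{\alpha}} v_k\, dx = (-1)^{[\alpha]} \int_{\R} u \cdot {^{\mp}}{\mathcal{D}}{^{\alpha}} v_k\, dx.
\]
I would then pass to the limit $k \to \infty$ in both sides using H\"older's inequality with the conjugate pairs $(p_2,q_2)$ and $(p_1,q_1)$: the left-hand side converges to $\int_{\R} {^{\pm}}{\mathcal{D}}{^{\alpha}} u \cdot v\, dx$ since ${^{\pm}}{\mathcal{D}}{^{\alpha}} u \in L^{q_2}(\R)$ and $v_k \to v$ in $L^{p_2}(\R)$, while the right-hand side converges to $(-1)^{[\alpha]} \int_{\R} u \cdot {^{\mp}}{\mathcal{D}}{^{\alpha}} v\, dx$ since $u \in L^{p_1}(\R)$ and ${^{\mp}}{\mathcal{D}}{^{\alpha}} v_k \to {^{\mp}}{\mathcal{D}}{^{\alpha}} v$ in $L^{q_1}(\R)$. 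Identifying the two limits yields the claim.

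The main obstacle I anticipate is the global (rather than merely local) convergence of ${^{\mp}}{\mathcal{D}}{^{\alpha}} v_k$ to ${^{\mp}}{\mathcal{D}}{^{\alpha}} v$ in $L^{q_1}(\R)$; this is exactly the content of the remark following Corollary \ref{corollary4.8}, without which the limit on the right-hand side cannot be justified on the whole line. The endpoint cases $p_k \in \{1,\infty\}$ (with $q_k$ defined by the usual convention) need only a routine verification of H\"older's inequality, and the case $\alpha \geq 1$ is handled identically since the defining relation already carries the factor $(-1)^{[\alpha]}$.
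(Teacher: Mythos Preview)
Your proposal is correct and follows essentially the same route as the paper: approximate $v$ by a sequence $\{v_k\}\subset C^\infty_0(\R)$ via Corollary~\ref{corollary4.8} (together with the remark upgrading to global $L^{q_1}$ convergence), apply the defining relation for ${^{\pm}}{\mathcal{D}}{^{\alpha}} u$ with test function $v_k$, and pass to the limit. The paper's argument is terser but identical in substance; your explicit invocation of H\"older's inequality and of Proposition~\ref{Weak=RL} simply spells out what the paper leaves implicit.
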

    
    \begin{proof}
    	By Corollary \ref{corollary4.8} we know that there exists a sequence $\{v_j\}_{j =1}^{\infty} \subset C^\infty_0(\R)$ 
    	such that $v_j\to v$ in $L^{p_2}(\R)$ and ${^{\mp}}{ \mathcal{\mathcal{D} }}{^{\alpha}} v_j \to 
    	{^{\mp}}{ \mathcal{\mathcal{D} }}{^{\alpha}} v $ in $L^{q_1}(\R)$ as $j\to \infty$. 
    	By the definition of $ {^{\pm}}{ \mathcal{\mathcal{D} }}{^{\alpha}} u$ we have 
    	 \begin{align*}
    	\int_{\R}  {^{\pm}}{ \mathcal{\mathcal{D} }}{^{\alpha}} u\, v_j\,dx 
    	= (-1)^{[\alpha]} \int_{\R} u\, {^{\mp}}{\mathcal{D} }{^{\alpha}} v_j\,dx . 
    	\end{align*}
    	Setting $j\to \infty$ immediately infers \eqref{integration_by_parts}. The proof is complete. 
    \end{proof} 

We note that in order to extend the above integration by parts  formula to the finite domain case, 
	it requires the notion of function traces, both function traces and the extended 
	formula will 
	be presented in \cite{Feng_Sutton1a} for functions in fractional Sobolev spaces.

    \subsection{Product and Chain Rules for Weak Fractional Derivatives}\label{sec-4.5}
    In this subsection we present some product and chain rules for weak fractional derivatives, which are
    similar to those for classical fractional derivatives given in \cite[Section 2.5]{Feng_Sutton}.

\begin{theorem}\label{WeakProductRule}
    Let $(a,b) \subset \R$ and $0<\alpha <1$.  Suppose that $\psi \in C^{m+1}([a,b])$ for $m \geq 1$ and ${^{\pm}}{\mathcal{D}}{^{\alpha}} u\in L^1_{loc}((a,b))$ exists. Then  ${^{\pm}}{\mathcal{D}}{^{\alpha}} (u \psi)$ 
    exists and is given by 
    \begin{align*}
        {^{\pm}}{\mathcal{D}}{^{\alpha}} (u \psi)(x) = {^{\pm}}{\mathcal{D}}{^{\alpha}} u(x) \cdot \psi(x) &+ \sum_{k=1}^{m} \dfrac{\Gamma(1 + \alpha)}{\Gamma(1+ k)\Gamma(1 - k + \alpha)} {^{\pm}}{I}{^{k-\alpha}} u(x) D^{k} \psi(x) \\
        &+ {^{\pm}}{R}{^{\alpha}_{m}}(u, \psi)(x) \qquad \mbox{a.e. in } (a,b),
    \end{align*}
    where 
    \begin{align*}
        {^{+}}{R}{^{\alpha}_{m}} (u,\psi)(x) = \dfrac{(-1)^{m+1}}{m! \Gamma(-\alpha)} \int_{x}^{b} \dfrac{u(y)}{(y -x )^{1 + \alpha}} \, dy \int_{x}^{y} \psi^{(m+1)} (z) (z - x)^{m} \, dz,\\
        {^{-}}{R}{^{\alpha}_{m}}(u,\psi)(x) = \dfrac{(-1)^{m+1}}{m! \Gamma(-\alpha)} \int_{a}^{x} \dfrac{u(y)}{(x-y)^{1 + \alpha}} \, dy \int_{y}^{x} \psi^{(m+1)}(z) (x -z)^{m} \, dz.
    \end{align*}
\end{theorem}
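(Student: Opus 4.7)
The plan is to derive the formula by smooth approximation, applying the known classical Riemann--Liouville product rule pointwise to the approximants, and then passing to the weak limit using the integration-by-parts that is built into the weak fractional derivative. Specifically, by the characterization of weak fractional derivatives established in Section \ref{sec-4.3.1} (finite interval case), there exists a sequence $\{u_j\}_{j=1}^{\infty} \subset C^{\infty}((a,b))$ with $u_j \to u$ in $L^1((a,b))$ and ${^{\pm}}{\mathcal{D}}{^{\alpha}} u_j \to {^{\pm}}{\mathcal{D}}{^{\alpha}} u$ in $L^1_{loc}((a,b))$.

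For each smooth $u_j$, the classical Riemann--Liouville product rule with $m$-term Leibniz expansion and integral remainder (cf. \cite[Section 2.5]{Feng_Sutton} and \cite{Samko}) holds pointwise a.e.\ in $(a,b)$:
\begin{align*}
{^{\pm}}{D}{^{\alpha}}(u_j \psi)(x)
&= {^{\pm}}{D}{^{\alpha}} u_j(x)\, \psi(x) \\
&\quad + \sum_{k=1}^{m} \frac{\Gamma(1+\alpha)}{\Gamma(1+k)\Gamma(1-k+\alpha)}\, {^{\pm}}{I}{^{k-\alpha}} u_j(x)\, D^k \psi(x) \\
&\quad + {^{\pm}}{R}{^{\alpha}_m}(u_j, \psi)(x).
\end{align*}
Since $u_j$ is smooth, Proposition \ref{Weak=RL} identifies its classical and weak fractional derivatives, so testing this identity against an arbitrary $\varphi \in C^\infty_0((a,b))$ and invoking the definition of the weak fractional derivative of $u_j \psi$ yields
\[
(-1)^{[\alpha]} \int_a^b (u_j \psi)(x)\, {^{\mp}}{D}{^{\alpha}} \tilde\varphi(x)\,dx
= \int_a^b \text{(RHS above)}\cdot \varphi(x)\,dx.
\]

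I would then pass $j \to \infty$ in both sides. The left side converges by dominated convergence, since $u_j \psi \to u \psi$ in $L^1((a,b))$ (as $\psi$ is bounded) and ${^{\mp}}{D}{^{\alpha}} \tilde\varphi \in L^\infty(\R)$. On the right, the term $\psi \cdot {^{\pm}}{\mathcal{D}}{^{\alpha}} u_j$ converges in $L^1_{loc}$ by construction, and each term ${^{\pm}}{I}{^{k-\alpha}} u_j \cdot D^k \psi$ converges because ${^{\pm}}{I}{^{k-\alpha}}$ is bounded on $L^1((a,b))$ (its kernel $(y-x)^{k-\alpha-1}$ is integrable on the bounded interval as $k \geq 1 > \alpha$) and $D^k\psi$ is bounded. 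The main obstacle is the convergence ${^{\pm}}{R}{^{\alpha}_m}(u_j,\psi) \to {^{\pm}}{R}{^{\alpha}_m}(u,\psi)$ in $L^1_{loc}$. For this I would use Fubini together with the key estimate
\[
\Bigl| \int_x^y \psi^{(m+1)}(z)(z-x)^m\,dz \Bigr| \leq \|\psi^{(m+1)}\|_{L^\infty} \frac{(y-x)^{m+1}}{m+1},
\]
which cancels the $(y-x)^{-1-\alpha}$ singularity, leaving an integrable kernel of order $(y-x)^{m-\alpha}$ with $m-\alpha > 0$. This shows that $u \mapsto {^{\pm}}{R}{^{\alpha}_m}(u,\psi)$ is a bounded linear operator on $L^1((a,b))$, so convergence of $u_j$ to $u$ in $L^1$ is enough.

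Combining these convergences, the resulting identity
\[
\int_a^b (u\psi)(x)\, {^{\mp}}{D}{^{\alpha}} \tilde\varphi(x)\,dx = (-1)^{[\alpha]} \int_a^b F(x)\,\varphi(x)\,dx \quad \forall\, \varphi \in C^\infty_0((a,b)),
\]
where $F$ denotes the claimed right-hand side, together with the uniqueness of the weak fractional derivative, yields ${^{\pm}}{\mathcal{D}}{^{\alpha}}(u\psi) = F$ almost everywhere in $(a,b)$. The local integrability of $F$, needed for the definition to apply, follows from the same estimates used for the remainder convergence plus the hypothesis ${^{\pm}}{\mathcal{D}}{^{\alpha}} u \in L^1_{loc}((a,b))$ and the $C^{m+1}$-regularity of $\psi$.
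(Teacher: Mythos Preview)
Your proposal is correct and follows essentially the same approach as the paper: approximate $u$ by smooth $u_j$ via the characterization theorem, apply the classical Riemann--Liouville product rule to $u_j\psi$, test against $\varphi\in C^\infty_0((a,b))$, and pass to the limit term by term. In fact, you supply more detail than the paper does on the convergence of the remainder ${^{\pm}}{R}{^{\alpha}_m}(u_j,\psi)\to {^{\pm}}{R}{^{\alpha}_m}(u,\psi)$, via the cancellation estimate on the inner integral; the paper simply invokes the $L^1$-continuity of ${^{\pm}}{I}{^{\sigma}}$ and the classical product rule from \cite{Feng_Sutton} and passes to the limit without isolating this step.
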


\begin{proof}
    Let $\left\{u_j \right\}_{j=1}^{\infty} \subset C^{\infty}((a,b))$ so that $u_j \rightarrow u$ in $ L^{1}((a,b))$ 
   and ${^{\pm}}{\mathcal{D}}{^{\alpha}} u_j \rightarrow {^{\pm}}{\mathcal{D}}{^{\alpha}} u$ in $
     L^{1}_{loc}((a,b))$. Consider the product $u_j \psi$, which belongs to $C((a,b))$,  and $\varphi \in C^{\infty}_{0}((a,b))$ with $\supp(\varphi) : = (c,d) \subset (a,b)$.  
      Since $u_j \rightarrow u$ in  $L^{1}(a,b))$, ${^{\pm}}{I}{^{\sigma}} u_j \rightarrow 
      {^{\pm}}{I}{^{\sigma}} u$ in $L^{1}((a,b))$. Using this fact and \cite[Theorem 2.3]{Feng_Sutton}, we obtain  
    \begin{align*}
        \int_{\Omega} u \psi {^{\mp}}{D}{^{\alpha}}\varphi\,dx
        &= \lim_{j \rightarrow \infty} \int_{\Omega} u_j \psi {^{\mp}}{D}^{{\alpha}} \varphi \,dx \\
        &= \lim_{j\rightarrow \infty} \int_{\Omega} {^{\pm}}{D}{^{\alpha}} (u_j \psi) \cdot \varphi \,dx
        = \lim_{j\rightarrow \infty} \int_{\Omega'} {^{\pm}}{D}{^{\alpha}} (u_j \psi) \cdot \varphi\,dx \\
        &= \lim_{j \rightarrow \infty} \int_{\Omega'} \left({^{\pm}}{D}{^{\alpha}} u_j \cdot \psi + \sum_{k=1}^{m} C_{k} {^{\pm}}{I}{^{k-\alpha}} u_j D^{k} \psi + {^{\pm}}{R}{^{\alpha}_{m}}(u_j,\psi) \right) \varphi\,dx \\ 
        &= \int_{\Omega} \left({^{\pm}}{\mathcal{D}}{^{\alpha}} u \cdot \psi + \sum_{k=1}^{m} C_{k} {^{\pm}}{I}{^{k-\alpha}} u D^{k} \psi + {^{\pm}}{R}{^{\alpha}_{m}(u,\psi)} \right) \varphi\,dx,
    \end{align*}
    which implies that 
    \begin{align*}
        {^{\pm}}{\mathcal{D}}{^{\alpha}} (u\psi) = {^{\pm}}{\mathcal{D}}{^{\alpha}} u \cdot \psi + \sum_{k=1}^{m} \dfrac{\Gamma(1+ \alpha)}{\Gamma(k+1) \Gamma(1 - k + \alpha)} {^{\pm}}{I}{^{k-\alpha}} u D^{k} \psi + {^{\pm}}{R}{_{m}^{\alpha}}(u,\psi)
    \end{align*}
    almost everywhere in $(a,b)$ with 
    \begin{align*}
        {^{+}}{R}{^{\alpha}_{m}} (u,\psi) (x) &= \dfrac{(-1)^{m}}{m! \Gamma(-\alpha)} \int_{x}^{b} \dfrac{u(y)}{(y -x )^{1 + \alpha}} \, dy \int_{x}^{y} \psi^{(m+1)} (z) (z - x)^{m} \, dz, \\
        {^{-}}{R}{^{\alpha}_{m}}(u,\psi)(x) & = \dfrac{(-1)^{m+1}}{m! \Gamma(-\alpha)} \int_{a}^{x} \dfrac{u(y)}{(x-y)^{1 + \alpha}} \, dy \int_{y}^{x} \psi^{(m+1)}(z) ( x-z)^{m} \, dz.
    \end{align*}
    The proof is complete.
\end{proof}

\begin{remark}
We also can prove another version of the product rules that do not include remainder terms. 
That version of the 
product rules will instead be written as infinite sums and require 
both functions are analytic. Because we do not wish to make such 
an assumption in our applications of the  weak fractional derivative product rule, 
we omit that version of the product rules. 
\end{remark}
 
Based on the above product rules with $m=0$, we can easily obtain this following chain rules for weak fractional 
derivatives. We omit the proof because it is similar to the proof of \cite[Theorem 2.4]{Feng_Sutton}. 

\begin{theorem}\label{thm_CR_weak}
	Let $(a,b)\subset \R$. Suppose that $\varphi \in C^1(\R)$ such that $\varphi(0)=0$ and $f\in C((a,b))$. Then there hold
	\begin{align}\label{chain_rule_weak}
	{^{\pm}}{\mathcal{D}}{^{\alpha}}  \varphi(f)(x)  =   \frac{\varphi(f)(x)}{f(x)} {^{\pm}}{\mathcal{D}}{^{\alpha}} f(x)
	+ {^{\pm}}{R}{_{0}^{\alpha}}\Bigl(f, \frac{\varphi(f)}{f} \Bigr)(x) \qquad \mbox{a.e. in } (a,b),
	\end{align}
	where ${^{\pm}}{R}{_{0}^{\alpha}}(f, g)$  are defined by 
	\begin{align}\label{LeftProductRuleRemainder_0}
    	        {^{-}}{R}{^{\alpha}_{0}}(f,g)(x) = \dfrac{-1}{\Gamma(- \alpha)} \int_{a}^{x} \dfrac{f(y) [g(x)-g(y)]}{(x- y)^{1 + \alpha}} \, dy , \\
    	        \label{RightProductRuleRemainder_0}
    	        {^{+}}{R}{^{\alpha}_{0}} (f,g)(x) = \dfrac{-1}{\Gamma(-\alpha)} \int_{x}^{b} \dfrac{f(y) [g(x)-g(y)]}{(y-x)^{1+\alpha}}\,dy .
    	    \end{align}
	
\end{theorem}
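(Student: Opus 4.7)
The plan is to reduce the chain rule to the product rule (Theorem \ref{WeakProductRule}) via an elementary factorization. Since $\varphi\in C^1(\R)$ with $\varphi(0)=0$, the function $\tilde{\varphi}(y):=\int_0^1 \varphi'(ty)\,dt$ is continuous on $\R$ and satisfies $\varphi(y) = y\,\tilde{\varphi}(y)$; consequently $\varphi(f)/f = \tilde{\varphi}(f)\in C((a,b))$ is well defined (taking the value $\varphi'(0)$ at zeros of $f$) and $\varphi(f) = f\cdot\tilde{\varphi}(f)$ pointwise on $(a,b)$. Viewed this way the chain rule is morally the product rule applied to this factorization.

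Next I would invoke Theorem \ref{WeakProductRule} with $m=0$, $u:=f$, and $\psi:=\tilde{\varphi}(f)$. The sum $\sum_{k=1}^{0}$ is empty, so only the main term ${^{\pm}}{\mathcal{D}}{^{\alpha}}f\cdot\psi$ and the remainder survive. Evaluating the remainder formula of Theorem \ref{WeakProductRule} at $m=0$ (with $(z-x)^{0}=1$, so that $\int_x^y\psi'(z)\,dz=\psi(y)-\psi(x)$) recovers exactly the expressions \eqref{LeftProductRuleRemainder_0}--\eqref{RightProductRuleRemainder_0} with $g=\tilde{\varphi}(f) = \varphi(f)/f$, yielding the claimed identity directly.

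The main obstacle is that Theorem \ref{WeakProductRule} was formulated with $\psi\in C^{m+1}([a,b])$, which in the $m=0$ case requires $\psi\in C^1$, whereas $\tilde{\varphi}(f)$ is only continuous under the hypothesis $f\in C((a,b))$. To remove this restriction I would regularize: let $\{f_j\}\subset C^{\infty}((a,b))$ be the sequence produced by the characterization theorem, so that $f_j\to f$ in $L^1((a,b))$ (and uniformly on compact subsets, via mollification of a continuous function) and ${^{\pm}}{\mathcal{D}}{^{\alpha}} f_j\to {^{\pm}}{\mathcal{D}}{^{\alpha}} f$ in $L^1_{loc}((a,b))$. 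For each smooth $f_j$ one has $\tilde{\varphi}(f_j)\in C^1$, so the classical product rule applies and delivers
\[
{^{\pm}}{D}{^{\alpha}}\varphi(f_j)(x) = \frac{\varphi(f_j)(x)}{f_j(x)}\,{^{\pm}}{D}{^{\alpha}} f_j(x) + {^{\pm}}{R}{_0^{\alpha}}\Bigl(f_j,\frac{\varphi(f_j)}{f_j}\Bigr)(x).
\]
Testing against an arbitrary $\phi\in C^{\infty}_0((a,b))$, passing $j\to\infty$ on both sides, and invoking uniqueness of the weak fractional derivative will then recover the desired identity almost everywhere on $(a,b)$.

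The genuinely delicate step will be the limit of the tested remainder. Its kernel $|x-y|^{-1-\alpha}$ is singular and is tamed only by the difference $\tilde{\varphi}(f_j)(x)-\tilde{\varphi}(f_j)(y)$, so passing to the limit requires a joint modulus of continuity for the $\tilde{\varphi}(f_j)$ that is uniform in $j$. Uniform convergence of $f_j$ on compacta together with uniform continuity of $\tilde{\varphi}$ on bounded sets yields pointwise convergence of the integrand, while a common modulus $\omega$ for the family $\{\tilde{\varphi}(f_j)\}$ on a compact containing $\text{supp}(\phi)$ furnishes a dominator of the form $C\,\omega(|x-y|)/|x-y|^{1+\alpha}$; provided the right-hand side of the stated identity is itself well defined (so that this dominator is integrable in the relevant window), dominated convergence closes the argument and the chain rule follows.
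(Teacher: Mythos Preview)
Your reduction to the product rule with $m=0$ via the factorization $\varphi(f)=f\cdot\tilde{\varphi}(f)$ is exactly the paper's approach; the paper explicitly says the chain rule follows from Theorem~\ref{WeakProductRule} with $m=0$ and refers to \cite[Theorem~2.4]{Feng_Sutton} for the analogous classical computation.

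There is, however, one genuine slip in your regularization step. You write ``for each smooth $f_j$ one has $\tilde{\varphi}(f_j)\in C^1$'', but this is not true under the stated hypothesis $\varphi\in C^1(\R)$: the function $\tilde{\varphi}(y)=\int_0^1\varphi'(ty)\,dt$ is in general only continuous, not $C^1$ (e.g.\ $\varphi(y)=y|y|^{1/2}$ near $0$ gives $\tilde{\varphi}(y)=|y|^{1/2}$). Composing a merely continuous $\tilde{\varphi}$ with a smooth $f_j$ still yields only a continuous function, so you cannot invoke the $m=0$ product rule in the form requiring $\psi\in C^1$. The fix is simple: regularize $\varphi$ as well (say $\varphi_\varepsilon\in C^2$ with $\varphi_\varepsilon(0)=0$, $\varphi_\varepsilon\to\varphi$ in $C^1_{loc}$), so that $\tilde{\varphi}_\varepsilon\in C^1$ and $\tilde{\varphi}_\varepsilon(f_j)\in C^1$ genuinely holds; then pass to the limit first in $j$, then in $\varepsilon$, using the same dominated-convergence mechanism you outlined for the remainder. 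With this additional mollification your argument goes through.
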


\subsection{Fundamental Theorem of Weak Fractional Calculus (FTwFC)}\label{sec-4.6}

In this subsection, we aim to extend the FTcFC (see Theorem \ref{FTFC}) to weakly fractionally differentiable functions. Similar to the FTcFC for classical fractional (Riemann-Liouville) derivatives, the finite and infinite domain cases are significantly different, hence must be treated separately. 

\subsubsection{\bf The Finite Interval Case}\label{sec-4.6.1}
To establish the FTwFC on a finite domain, 
we first need to show the following crucial lift lemma.
   
   \begin{lemma}\label{FTFCConstant}
    Let $\Omega\subset \R$ and $0<\alpha <1$. Suppose that  $u \in L^{p}(\Omega)$ and  ${^{\pm}}{\mathcal{D}}{^{\alpha}}u \in L^{p}(\Omega)$ for some $1 \leq p < \infty$. 
    Then ${^{\pm}}{I}{^{1-\alpha}}u \in  W^{1,1}(\Omega)$. 
    \end{lemma}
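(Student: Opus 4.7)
The strategy is to identify the first-order weak derivative of ${^{\pm}}I^{1-\alpha}u$ as (a sign times) the weak fractional derivative ${^{\pm}}{\mathcal{D}}^{\alpha}u$; once this is in hand, membership in $W^{1,1}(\Omega)$ follows from the integrability hypotheses together with the mapping properties of ${^{\pm}}I^{1-\alpha}$. First, I would invoke the approximation characterization (Theorem \ref{characterization}, its finite-interval analogue, and Corollary \ref{corollary4.8}) to produce a sequence $\{u_j\}_{j=1}^{\infty} \subset C^{\infty}(\Omega)$ (or $C^{\infty}_0(\R)$ in the case $\Omega=\R$) satisfying $u_j \to u$ in $L^p(\Omega)$ and ${^{\pm}}{\mathcal{D}}^{\alpha} u_j \to {^{\pm}}{\mathcal{D}}^{\alpha} u$ in $L^p_{loc}(\Omega)$. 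For each $u_j$, Definition \ref{def2.2} gives the pointwise identity $\frac{d}{dx}{^{\pm}}I^{1-\alpha}u_j = \mp\, {^{\pm}}D^{\alpha} u_j$, and by Proposition \ref{Weak=RL} this classical derivative coincides with ${^{\pm}}{\mathcal{D}}^{\alpha}u_j$.

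Next, since ${^{\pm}}I^{1-\alpha}$ is a convolution with the locally integrable kernel $y^{\alpha-1}/\Gamma(1-\alpha)$, Young's convolution inequality yields boundedness of this operator on $L^p$ (at least in the finite-interval case), hence ${^{\pm}}I^{1-\alpha} u_j \to {^{\pm}}I^{1-\alpha} u$ in $L^p_{loc}(\Omega)$. For any $\varphi \in C^{\infty}_0(\Omega)$, classical integration by parts applied to the smooth integrand gives
\begin{align*}
\int_{\Omega} {^{\pm}}I^{1-\alpha} u_j(x) \cdot \varphi'(x)\, dx = \pm \int_{\Omega} {^{\pm}}{\mathcal{D}}^{\alpha} u_j(x) \cdot \varphi(x)\, dx.
\end{align*}
Because $\mathrm{supp}(\varphi)$ is compact and both convergences hold locally in $L^p$, I can pass to the limit $j \to \infty$ on both sides to deduce
\begin{align*}
\int_{\Omega} {^{\pm}}I^{1-\alpha} u(x) \cdot \varphi'(x)\, dx = \pm \int_{\Omega} {^{\pm}}{\mathcal{D}}^{\alpha} u(x) \cdot \varphi(x)\, dx \qquad \forall \varphi \in C^{\infty}_0(\Omega),
\end{align*}
which identifies $\mp\, {^{\pm}}{\mathcal{D}}^{\alpha}u$ as the first-order weak derivative of ${^{\pm}}I^{1-\alpha}u$.

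To close, the hypothesis ${^{\pm}}{\mathcal{D}}^{\alpha}u \in L^p(\Omega)$ combined with ${^{\pm}}I^{1-\alpha}u \in L^p(\Omega)$ and H\"older's inequality on a finite interval forces both functions to lie in $L^1(\Omega)$, yielding ${^{\pm}}I^{1-\alpha}u \in W^{1,1}(\Omega)$. The main technical obstacle I anticipate is the case $\Omega=\R$: neither ${^{\pm}}I^{1-\alpha}u$ nor ${^{\pm}}{\mathcal{D}}^{\alpha}u$ is automatically globally $L^1$-integrable from $L^p$ data alone, so one either interprets the conclusion via $W^{1,1}_{loc}$ or supplements the argument with decay information coming from the one-sided kernel structure; in the finite-interval setting where this lemma is placed, the argument above suffices.
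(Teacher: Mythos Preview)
Your argument is correct and reaches the same conclusion, but the concluding mechanism differs from the paper's. Both proofs start from the same approximation theorem to produce $\{u_j\}\subset C^\infty(\Omega)$ with $u_j\to u$ and ${^{\pm}}{\mathcal{D}}{^{\alpha}}u_j\to {^{\pm}}{\mathcal{D}}{^{\alpha}}u$. The paper then argues at the norm level: it shows $\{{^{\pm}}{I}{^{1-\alpha}}u_j\}$ is Cauchy in $W^{1,1}(\Omega)$, invokes completeness to obtain a limit $v\in W^{1,1}(\Omega)$, and finally identifies $v={^{\pm}}{I}{^{1-\alpha}}u$. You instead work at the dual-pairing level: you pass to the limit directly in the identity $\int {^{\pm}}{I}{^{1-\alpha}}u_j\,\varphi'\,dx=\pm\int {^{\pm}}{\mathcal{D}}{^{\alpha}}u_j\,\varphi\,dx$, thereby identifying $\mp\,{^{\pm}}{\mathcal{D}}{^{\alpha}}u$ as the weak derivative of ${^{\pm}}{I}{^{1-\alpha}}u$ without ever appealing to completeness of $W^{1,1}$. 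Your route is slightly more elementary and makes the identity $\mathcal{D}\bigl({^{\pm}}{I}{^{1-\alpha}}u\bigr)=\mp\,{^{\pm}}{\mathcal{D}}{^{\alpha}}u$ explicit, which is itself useful; the paper's route packages the same information as a $W^{1,1}$ limit and is perhaps more readily reusable when one later wants $W^{1,1}$-norm control. Your caveat about $\Omega=\R$ is apt: the lemma sits in the finite-interval subsection, and the paper's own proof also relies on the $L^p\hookrightarrow L^1$ embedding on bounded intervals.
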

   
   \begin{proof}
       Choose $\{u_j\}_{j=1}^{\infty} \subset C^{\infty}(\Omega)$ so that $u_j \rightarrow u$ in $L^{p}(\Omega)$ and ${^{\pm}}{\mathcal{D}}{^{\alpha}}u_j \rightarrow {^{\pm}}{\mathcal{D}}{^{\alpha}}u$ in $L^{p}(\Omega)$. Since ${^{\pm}}{D}{^{\alpha}}u_j \in L^{1}(\Omega)$, 
       then ${^{\pm}}{I}{^{1-\alpha}} u_j \in W^{1,1}(\Omega) $.  
       By the stability property of ${^{\pm}}{I}{^{1-\alpha}}$ we have
       \begin{align*}
           \|{^{\pm}}{I}{^{1-\alpha}} u_m - {^{\pm}}{I}{^{1-\alpha}} u_n\|_{ W^{1,1}(\Omega)}         
           &=\|{^{\pm}}{I}{^{1-\alpha}} u_m - {^{\pm}}{I}{^{1-\alpha}} u_n\|_{L^{1}(\Omega)} \\
           &\qquad  + \|{^{\pm}}{D}{^{\alpha}} u_m - {^{\pm}}{D}{^{\alpha}} u_n \|_{L^{1}(\Omega) } \nonumber \\ 
           &\leq C \|u_m - u_n\|_{L^{1} (\Omega)} + \| {^{\pm}}{\mathcal{D}}{^{\alpha}}u_m - {^{\pm}}{\mathcal{D}}{^{\alpha}} u_n \|_{L^{1}(\Omega)}\\
           &\to 0\quad\mbox{as } n,m\to \infty.
       \end{align*}
       Hence,  $\{{^{\pm}}{I}{^{1-\alpha}}u_j\}_{j=1}^{\infty}$ is a Cauchy sequence in $W^{1,1}(\Omega)$. Since $W^{1,1}(\Omega)$ is a Banach space, there exists $v \in W^{1,1}(\Omega)$ so that ${^{\pm}}{I}{^{1-\alpha}}u_j \rightarrow v$ in $W^{1,1}(\Omega)$ . 
       
       It remains to show that $v = {^{\pm}}{I}{^{1-\alpha}}u$. On noting that  
       \begin{align*}
        \|v - {^{\pm}}{I}{^{1-\alpha}}u\|_{L^1(\Omega)}
        &  \leq \|v - {^{\pm}}{I}{^{1-\alpha}}u_j \|_{L^{1}(\Omega)} + \|{^{\pm}}{I}{^{1-\alpha}} u_j - {^{\pm}}{I}{^{1-\alpha}} u \|_{L^{1}(\Omega)}  \\
        &\leq \|v - {^{\pm}}{I}{^{1-\alpha}}u_j \|_{L^{1}(\Omega)} + C \| u_j -  u \|_{L^{1}(\Omega)} 
        \to 0 \quad \mbox{as } j\to \infty.
       \end{align*}
       Hence,  $v = {^{\pm}}{I}{^{1-\alpha}} u$ almost everywhere in $\Omega$. The proof is complete.
   \end{proof}

	\begin{remark}
	Since $AC(\overline{\Omega})$ is isomorphic to $W^{1,1}(\Omega)$ in the 1D case, 
	the above lemma also implies that ${^{\pm}}{I}{^{1-\alpha}}u \in AC(\overline{\Omega})$. The above lemma shows that 
	if $u \in {^{\pm}}{W}{^{\alpha,1}}(\Omega)$ (see the space definition in \cite{Feng_Sutton1a}),
	then the operator ${^{\pm}}{I}{^{1-\alpha}}$ lifts $u$ from 
	${^{\pm}}{W}{^{\alpha,1}}(\Ome)$ into ${W}{^{1,1}}(\Ome)$. This result reinforces the characterization of weakly fractional differentiable functions as stated in Section \ref{sec-4.2}. In particular, 
	one can roughly think about weakly fractional differentiable functions as those whose classical fractional derivatives exist almost everywhere. This is (almost) exactly the same characterization 
	for first order weakly differentiable functions (in 1D). Precisely, absolute continuity characterizes weakly differentiable functions and the absolute continuity of 
	${^{\pm}}{I}{^{1-\alpha}}u$ characterizes weakly fractional differentiable 
	functions. 
	\end{remark}

   \begin{theorem}\label{FTWFC}
       Let $\Omega\subset \R$ and $0 < \alpha <1$. Suppose that $u \in L^{p}(\Omega)$ and  ${^{\pm}}{\mathcal{D}}{^{\alpha}}u \in L^{p}(\Omega)$ for some $1\leq p < \infty$. Then
       there holds
       \begin{align}\label{WeakFTFC}
           u = c^{1-\alpha}_{\pm} \kappa^{\alpha}_{\pm}  + {^{\pm}}{I}{^{\alpha}}{^{\pm}}{\mathcal{D}}{^{\alpha}} u \qquad
           \mbox{a.e. in } \Omega.
       \end{align}
  
   \end{theorem}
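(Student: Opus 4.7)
The natural strategy is to reduce the weak statement to the classical Fundamental Theorem of Fractional Calculus (Theorem \ref{FTFC}, equivalently Lemma \ref{lemma3.1}(c)) by a density/limiting argument, with the crucial Lemma \ref{FTFCConstant} providing the lift of $u$ needed to make sense of the boundary constant $c^{1-\alpha}_\pm$ in the limit. First, I would invoke the characterization theorem (and its Corollary \ref{corollary4.8}) to choose a sequence $\{u_j\}_{j=1}^\infty \subset C^\infty(\Omega)$ such that $u_j \to u$ in $L^p(\Omega)$ and ${^{\pm}}{\mathcal{D}}{^{\alpha}} u_j \to {^{\pm}}{\mathcal{D}}{^{\alpha}} u$ in $L^p(\Omega)$ as $j \to \infty$. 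Because each $u_j$ is smooth, ${^{\pm}}{I}{^{1-\alpha}} u_j$ is automatically in $AC(\overline{\Omega})$, so Lemma \ref{lemma3.1}(c) applies and yields, for each $j$, the classical identity
\begin{equation*}
u_j(x) = c^{1-\alpha}_{\pm,j}\, \kappa^\alpha_\pm(x) + {^{\pm}}{I}{^{\alpha}} {^{\pm}}{D}{^{\alpha}} u_j(x) \qquad \text{a.e. in } \Omega,
\end{equation*}
where $c^{1-\alpha}_{\pm,j}$ is given by \eqref{FTFC_3} with $f$ replaced by $u_j$. Since $u_j\in C^\infty$, Proposition \ref{Weak=RL} identifies ${^{\pm}}{D}{^{\alpha}} u_j$ with ${^{\pm}}{\mathcal{D}}{^{\alpha}} u_j$ almost everywhere.

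Next I would pass to the limit $j\to\infty$ term by term. The left-hand side converges to $u$ in $L^p(\Omega)$. For the integral term, the mapping properties of ${^{\pm}}{I}{^{\alpha}}$ (boundedness on $L^p(\Omega)$ for $\Omega$ finite, and in a suitable local sense on $\R$) give
\begin{equation*}
{^{\pm}}{I}{^{\alpha}} {^{\pm}}{D}{^{\alpha}} u_j \longrightarrow {^{\pm}}{I}{^{\alpha}} {^{\pm}}{\mathcal{D}}{^{\alpha}} u
\end{equation*}
in $L^p$ (resp.\ $L^p_{loc}$). The delicate step is to show $c^{1-\alpha}_{\pm,j}\to c^{1-\alpha}_\pm$. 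Here I would appeal to Lemma \ref{FTFCConstant}: the hypotheses $u\in L^p(\Omega)$ and ${^{\pm}}{\mathcal{D}}{^{\alpha}} u \in L^p(\Omega)$ ensure that ${^{\pm}}{I}{^{1-\alpha}} u \in W^{1,1}(\Omega)$, and the Cauchy estimate in that lemma shows ${^{\pm}}{I}{^{1-\alpha}} u_j \to {^{\pm}}{I}{^{1-\alpha}} u$ in $W^{1,1}(\Omega)$. Because $W^{1,1}(\Omega)\hookrightarrow C(\overline{\Omega})$ in one dimension, this convergence is uniform on $\overline{\Omega}$, so the evaluations at the endpoint $a$ (resp.\ $b$) converge, which is precisely the statement $c^{1-\alpha}_{\pm,j}\to c^{1-\alpha}_\pm$ by \eqref{FTFC_3}. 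Combining the three limits yields \eqref{WeakFTFC} almost everywhere in $\Omega$.

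I expect the main obstacle to be exactly this endpoint convergence of the constants, since the boundary trace of ${^{\pm}}{I}{^{1-\alpha}} u$ is the only ingredient that is not immediately controlled by an $L^p$ norm; it is resolved by the $W^{1,1}\hookrightarrow C(\overline{\Omega})$ embedding in 1D, which is why Lemma \ref{FTFCConstant} is indispensable. For the infinite interval case $\Omega=\R$, the kernel $\kappa^\alpha_\pm$ is absent (equivalently, the boundary constant vanishes because ${^{\pm}}{I}{^{1-\alpha}} u$ decays at infinity, cf.\ Theorem \ref{FTFCa}), and the same approximation argument reduces \eqref{WeakFTFC} to the identity $u = {^{\pm}}{I}{^{\alpha}} {^{\pm}}{\mathcal{D}}{^{\alpha}} u$, obtained by the same passage to the limit on the smooth approximants provided by Theorem \ref{characterization}.
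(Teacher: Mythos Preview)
Your proposal is correct and follows essentially the same approach as the paper: approximate by smooth $u_j$ via the characterization theorem, apply the classical FTcFC to each $u_j$, and pass to the limit term by term, using Lemma \ref{FTFCConstant} (and the one-dimensional embedding $W^{1,1}(\Omega)\hookrightarrow C(\overline{\Omega})\cong AC(\overline{\Omega})$) to handle the convergence of the boundary constants $c^{1-\alpha}_{\pm,j}\to c^{1-\alpha}_{\pm}$. Your identification of the endpoint-constant convergence as the delicate step, and its resolution via Lemma \ref{FTFCConstant}, matches the paper exactly.
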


   \begin{proof}
      Let $\{u_j \}_{j=1}^{\infty} \subset C^{\infty}(\Omega)$ so that $u_j \rightarrow u$ 
      in $L^{p}(\Omega)$ and ${^{\pm}}{\mathcal{D}}{^{\alpha}}u_j \rightarrow {^{\pm}}{\mathcal{D}}{^{\alpha}}u$ in $L^{p}(\Omega)$; in particular, 
      $u_j$ and its derivative converge in $L^{1}(\Omega)$. By Lemma \ref{FTFCConstant}, ${^{\pm}}{I}{^{1-\alpha}}u_j \rightarrow {^{\pm}}{I}{^{1-\alpha}} u$ in $W^{1,1}(\Ome)\cong AC(\overline{\Omega})$.
      Moreover, by the FTcFC we get
      \[
      u_j (x) = c^{1-\alpha}_{j,\pm} \kappa^{\alpha}_{\pm}(x)  + {^{\pm}}{I}{^{\alpha}}{^{\pm}}{D}{^{\alpha}} u_j (x).
      \]
Thus,
      \begin{align*}
     & \|u-c^{1-\alpha}_{\pm} \kappa^{\alpha}_{\pm} - {^{\pm}}{I}{^{\alpha}}  {^{\pm}}{D}{^{\alpha}} u\|_{L^1(\Omega)} \\
         &\quad\leq  \|u - u_j\|_{L^{1}(\Omega)} + |c^{1-\alpha}_{\pm} - c^{1-\alpha}_{j,\pm}| \,\|\kappa^{\alpha}_{\pm}\|_{L^{1}(\Omega)} + \|{^{\pm}}{I}{^{\alpha}} ({^{\pm}}{\mathcal{D}}{^{\alpha}} u - {^{\pm}}{D}{^{\alpha}} u_j ) \|_{L^{1}(\Omega)}\\
         &\quad\leq  \|u - u_j\|_{L^{1}(\Omega)} + |c^{1-\alpha}_{\pm} - c^{1-\alpha}_{j,\pm}| \,\|\kappa^{\alpha}_{\pm}\|_{L^{1}(\Omega)} + C\|{^{\pm}}{\mathcal{D}}{^{\alpha}} u - {^{\pm}}{D}{^{\alpha}}u_j\|_{L^{1}(\Omega)} \\
         &\quad \to 0 \qquad\mbox{as } j\to \infty.
      \end{align*}
     Therefore, 
     \[
     u -c^{1-\alpha}_{\pm} \kappa^{\alpha}_{\pm} - {^{\pm}}{I}{^{\alpha}}  {^{\pm}}{D}{^{\alpha}} u =0 \qquad \mbox{a.e. in }  \Omega.
     \]
  The proof is complete. 
   \end{proof}

\begin{remark}
	(a) We refer to Theorem \ref{FTWFC} as the Fundamental Theorem of Weak Fractional 
	Calculus (FTwFC) in this paper.  
	
	(b) The above FTwFC is an essential tool for studying weakly fractional differentiable functions, 
	in particular, it will play a crucial role in proving compact and Sobolev embeddings and a fractional Poincar\'e inequality in  \cite{Feng_Sutton1a}.
\end{remark}
   
    To conclude this subsection,  we would like to circle back to an unproven 
    inclusion result for weak fractional derivatives which was alluded to in 
    Proposition \ref{properties} part $(ii)$. This then presents the first application of the FTwFC. 
   
   \begin{proposition}
       Let $\Omega \subset \R$ and $0 < \alpha < \beta <1$. Suppose that ${^{\pm}}{\mathcal{D}}{^{\beta}}u$ exists in $L^{1}(\Omega)$. Then ${^{\pm}}{\mathcal{D}}{^{\alpha}}u$ exists in $L^{1}(\Omega)$.
   \end{proposition}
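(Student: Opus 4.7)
The plan is to apply the Fundamental Theorem of Weak Fractional Calculus (Theorem \ref{FTWFC}) at the order $\beta$ to decompose $u$, then use the semigroup property of the fractional integral operator to split off an $\alpha$-integration, and finally differentiate each piece using Lemma \ref{lemma3.1} together with Proposition \ref{Weak=RL}. The existence of ${^{\pm}}{\mathcal{D}}{^{\beta}}u$ guarantees $u\in L^1(\Omega)$, so Theorem \ref{FTWFC} applies with $p=1$ and yields (in the finite interval case)
\[
u = c^{1-\beta}_{\pm}\kappa^{\beta}_{\pm} + {^{\pm}}{I}{^{\beta}}\bigl({^{\pm}}{\mathcal{D}}{^{\beta}}u\bigr)\qquad\text{a.e.\ in }\Omega,
\]
where the kernel term is absent when $\Omega=\R$ (see Theorem \ref{FTFCa}).

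Next, by the semigroup property of fractional integration, ${^{\pm}}{I}{^{\beta}} = {^{\pm}}{I}{^{\alpha}}\,{^{\pm}}{I}{^{\beta-\alpha}}$, so setting $w:={^{\pm}}{I}{^{\beta-\alpha}}\bigl({^{\pm}}{\mathcal{D}}{^{\beta}}u\bigr)$ we obtain
\[
u = c^{1-\beta}_{\pm}\kappa^{\beta}_{\pm} + {^{\pm}}{I}{^{\alpha}} w,
\]
with $w\in L^1(\Omega)$ because ${^{\pm}}{I}{^{\beta-\alpha}}$ is a bounded (convolution-type) operator on $L^1(\Omega)$. I now differentiate each summand classically. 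For the kernel, a direct computation (or Lemma \ref{lemma3.1}) gives, e.g., ${_a}{D}{^\alpha_x}(x-a)^{\beta-1} = \frac{\Gamma(\beta)}{\Gamma(\beta-\alpha)}(x-a)^{\beta-\alpha-1}$; since $0<\beta-\alpha<1$, the exponent $\beta-\alpha-1\in(-1,0)$ makes this integrable on bounded intervals, so ${^{\pm}}{D}{^{\alpha}}\kappa^{\beta}_{\pm}\in L^1(\Omega)$. For the second summand, Lemma \ref{lemma3.1}(b) gives ${^{\pm}}{D}{^{\alpha}}\bigl({^{\pm}}{I}{^{\alpha}}w\bigr)=w\in L^1(\Omega)$.

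Since the classical Riemann-Liouville $\alpha$-derivatives of both summands exist and lie in $L^1(\Omega)\subset L^1_{loc}(\Omega)$, Proposition \ref{Weak=RL} identifies them with the corresponding weak $\alpha$-derivatives. Invoking the linearity of weak fractional differentiation (Proposition \ref{properties}(i)) then delivers
\[
{^{\pm}}{\mathcal{D}}{^{\alpha}}u
= c^{1-\beta}_{\pm}\,\tfrac{\Gamma(\beta)}{\Gamma(\beta-\alpha)}(x-a)^{\beta-\alpha-1}
+ {^{\pm}}{I}{^{\beta-\alpha}}\bigl({^{\pm}}{\mathcal{D}}{^{\beta}}u\bigr)
\quad\text{a.e.\ in }\Omega,
\]
(with the obvious adjustment for the right-sided and/or infinite cases), and the right-hand side lies in $L^1(\Omega)$.

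The main point requiring care is the semigroup identity ${^{\pm}}{I}{^{\beta}}={^{\pm}}{I}{^{\alpha}}{^{\pm}}{I}{^{\beta-\alpha}}$ on $L^1$ and the handling of the unbounded domain (where the kernel term drops out but boundedness of ${^{\pm}}{I}{^{\beta-\alpha}}$ on $L^1(\R)$ must be used in the appropriate weighted/tempered sense); both are standard consequences of the material recalled in Section \ref{sec-2} and pose no real obstacle.
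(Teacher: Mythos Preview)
Your proposal is correct and follows essentially the same approach as the paper: both apply the FTwFC at order $\beta$ to decompose $u$, then take the $\alpha$-derivative of the kernel piece and of the ${^{\pm}}{I}{^{\beta}}$ piece (via the semigroup relation ${^{\pm}}{D}{^{\alpha}}{^{\pm}}{I}{^{\beta}}={^{\pm}}{I}{^{\beta-\alpha}}$), and check the result lands in $L^1$. The only cosmetic difference is that the paper works directly from the weak-derivative definition by testing against ${^{\mp}}{D}{^{\alpha}}\varphi$ and integrating by parts, whereas you package that step as Proposition~\ref{Weak=RL} plus linearity; the underlying computation is the same.
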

   
   \begin{proof}
        It follows by Theorem \ref{FTWFC} that 
        \begin{align*}
            u = c^{1-\beta}_{\pm} \kappa^{\beta}_{\pm} + {^{\pm}}{I}{^{\beta}} {^{\pm}}{\mathcal{D}}{^{\beta}} u \qquad\mbox{a.e. in } \Omega.
        \end{align*}
         Then there holds   
        \begin{align*}
            \int_{\Omega} u\, {^{\mp}}{D}{^{\alpha}} \varphi\,dx 
            &= \int_{\Omega} \bigl(c^{1-\beta}_{\pm} \kappa^{\beta}_{\pm} + {^{\pm}}{I}{^{\beta}} {^{\pm}}{\mathcal{D}}{^{\beta}} u \bigr)\,{^{\mp}}{D}{^{\alpha}} \varphi\,dx \\
            &= \int_{\Omega} {^{\pm}}{D}{^{\alpha}} \bigl(c^{1-\beta}_{\pm} \kappa^{\beta}_{\pm} + {^{\pm}}{I}{^{\beta}} {^{\pm}}{\mathcal{D}}{^{\beta}} u \bigr)\,\varphi\,dx \\ 
            &= \int_{\Omega} \bigl(c^{1-\beta}_{\pm} \kappa^{\beta -\alpha}_{\pm} + {^{\pm}}{I}{^{\beta -\alpha}} {^{\pm}}{\mathcal{D}}{^{\beta}} u \bigr)\, \varphi\,dx. 
        \end{align*}
        Since a direct calculation shows that $v:=c^{1-\beta}_{\pm} \kappa^{\beta -\alpha}_{\pm} + {^{\pm}}{I}{^{\beta -\alpha}} {^{\pm}}{\mathcal{D}}{^{\beta}} u\in L^1(\Omega)$,
        then the above identity implies that ${^{\pm}}{\mathcal{D}}{^{\alpha}}u$ exists and 
        ${^{\pm}}{\mathcal{D}}{^{\alpha}}u=v$ almost everywhere in $\Omega$. The proof is complete.
   \end{proof}
   
\subsubsection{\bf The Infinite Interval Case}\label{sec-4.6.2}
Unlike the finite domain, the absence of any boundary in the infinite interval case 
$\Ome=\R$ allows for a cleaner statement of the FTwFC and a simpler proof.

\begin{theorem}\label{FTWFCR}
    Let $0 < \alpha < 1$. Suppose that $u, v \in L^{1}(\R)$. If
    \begin{align}\label{WeakFTFCR}
        u = {^{\pm}}{I}{^{\alpha}} v \quad \mbox{a.e. in } \R,
    \end{align}
    then ${^{\pm}}{\mathcal{D}}{^{\alpha}}u = v$ almost everywhere. The converse is true under the additional assumption $u(x) \rightarrow 0$ almost everywhere as $|x| \to \infty$.
\end{theorem}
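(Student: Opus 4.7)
My plan is to prove both directions by leveraging the classical FTcFC on $\R$ (Theorem \ref{FTFCa}) together with a \emph{fractional integration by parts identity}
\begin{align*}
\int_\R f(x) \cdot {^{\pm}}I^\alpha g(x)\, dx = \int_\R {^{\mp}}I^\alpha f(x) \cdot g(x)\, dx,
\end{align*}
which is nothing more than Fubini's theorem applied to the one-sided convolution kernel $(x-y)^{\alpha-1}_{\pm}/\Gamma(\alpha)$, valid whenever the double integral converges absolutely. Note also that $[\alpha]=0$ throughout, so the sign factor $(-1)^{[\alpha]}$ in the definition of weak derivative is trivial.

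\textbf{Forward direction.} Suppose $u = {^{\pm}}I^\alpha v$ a.e. For any $\varphi \in C^\infty_0(\R)$, I would compute
\begin{align*}
\int_\R u \cdot {^{\mp}}D^\alpha \varphi\, dx
= \int_\R {^{\pm}}I^\alpha v \cdot {^{\mp}}D^\alpha \varphi\, dx
= \int_\R v \cdot {^{\mp}}I^\alpha {^{\mp}}D^\alpha \varphi\, dx
= \int_\R v \varphi\, dx,
\end{align*}
where the middle equality is fractional IBP (justified by Tonelli together with the $L^1\cap L^\infty$ integrability and tail decay of ${^{\mp}}D^\alpha \varphi$ noted in Section \ref{sec-2.7}), and the final equality is Lemma \ref{lem3.4}(b) applied to $\varphi \in C^\infty_0(\R)$, whose hypotheses hold trivially. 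This verifies ${^{\pm}}\mathcal{D}^\alpha u = v$ by the definition of the weak fractional derivative.

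\textbf{Converse direction.} Assume ${^{\pm}}\mathcal{D}^\alpha u = v$ a.e.\ and $u(x)\to 0$ as $|x|\to\infty$. Invoking Corollary \ref{corollary4.8} in its strengthened $L^1(\R)$ form (see the accompanying Remark), I would choose a sequence $\{u_j\} \subset C^\infty_0(\R)$ with $u_j \to u$ and ${^{\pm}}D^\alpha u_j = {^{\pm}}\mathcal{D}^\alpha u_j \to v$ in $L^1(\R)$. Each $u_j$ being smooth and compactly supported, Theorem \ref{FTFCa} yields $u_j = {^{\pm}}I^\alpha {^{\pm}}D^\alpha u_j$ a.e. For any $\psi \in C^\infty_0(\R)$, fractional IBP gives
\begin{align*}
\int_\R u_j \psi\, dx = \int_\R {^{\pm}}I^\alpha ({^{\pm}}D^\alpha u_j) \cdot \psi\, dx = \int_\R {^{\pm}}D^\alpha u_j \cdot {^{\mp}}I^\alpha \psi\, dx.
\end{align*}
Because $\psi \in C^\infty_0(\R)$ forces ${^{\mp}}I^\alpha \psi \in L^\infty(\R)$, the $L^1$-convergences $u_j \to u$ and ${^{\pm}}D^\alpha u_j \to v$ permit passage to the limit, yielding $\int_\R u\, \psi\, dx = \int_\R v \cdot {^{\mp}}I^\alpha \psi\, dx = \int_\R {^{\pm}}I^\alpha v \cdot \psi\, dx$ for every such $\psi$, whence $u = {^{\pm}}I^\alpha v$ a.e.

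\textbf{Main obstacle.} The substantive difficulty is entirely in the converse, specifically in upgrading the approximation from $L^1_{loc}(\R)$ to $L^1(\R)$. The decay hypothesis $u(x)\to 0$ at infinity enters at precisely this point: it is what ensures that the concrete approximating sequence $u_j = \eta_{1/j} * (\psi_j u)$ used in the proof of Theorem \ref{characterization} satisfies globally (not merely locally) that the cut-off error and the product-rule remainder $^{\pm}R^{\alpha}_{m}(u,\psi_j)$ vanish in $L^1(\R)$ as $j\to\infty$. Without this decay, only $L^1_{loc}$-convergence of the fractional derivatives is available, and then the limit-passage against the non-compactly-supported function ${^{\mp}}I^\alpha \psi$ in the display above cannot be made rigorous.
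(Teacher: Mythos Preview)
Your proof follows the same skeleton as the paper's (characterization theorem plus the classical FTcFC on~$\R$, Theorem~\ref{FTFCa}); the paper's proof is a one-line citation of exactly these two ingredients, and you have supplied the details. Your forward direction is in fact a minor variant: rather than invoking Theorem~\ref{FTFCa} to obtain the classical identity ${^{\pm}}D^{\alpha}u=v$ and then Proposition~\ref{Weak=RL} to upgrade it to the weak derivative, you argue directly via the Fubini-based duality $\int f\,{^{\pm}}I^{\alpha}g=\int {^{\mp}}I^{\alpha}f\,g$ and Lemma~\ref{lem3.4}(b). Both routes are valid; yours avoids any discussion of pointwise differentiability of ${^{\pm}}I^{1-\alpha}u$, at the cost of having to justify absolute convergence of the double integral (your remark that ${^{\mp}}D^{\alpha}\varphi\in L^{1}\cap L^{\infty}$ with $|x|^{-1-\alpha}$ tail decay is exactly what is needed to see that ${^{\mp}}I^{\alpha}|{^{\mp}}D^{\alpha}\varphi|\in L^{\infty}(\R)$, so Tonelli applies against $v\in L^{1}$).

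For the converse you correctly recognize that only $L^{1}_{loc}$ convergence of ${^{\pm}}\mathcal{D}^{\alpha}u_j$ comes for free from Theorem~\ref{characterization}, and that this is insufficient to pass to the limit against the non-compactly supported function ${^{\mp}}I^{\alpha}\psi$; you therefore appeal to the $L^{1}(\R)$ upgrade stated in the Remark after Corollary~\ref{corollary4.8}. This is precisely the gap the paper's terse proof leaves open, and your identification of the decay hypothesis $u(x)\to 0$ as the mechanism controlling the global product-rule remainder ${^{\pm}}R^{\alpha}_{m}(u,\psi_j)$ is a reasonable reading of why that hypothesis appears in the statement. The paper does not spell this out either, so on this point you have gone further than the source.
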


\begin{proof}
    The assertion and the accompanying equation \eqref{WeakFTFCR} follow from
    an application of the characterization theorem (cf. Theorem \ref{characterization})
    for weak fractional derivatives and Theorem \ref{FTFCa}.
\end{proof}

As was illustrated in the finite domain case and the infinite domain case for classical fractional derivatives, we can use the relation \eqref{WeakFTFCR}
 to show a basic inclusion result for weak fractional derivatives.

\begin{proposition}
    Let $0 < \alpha < \beta < 1$. Suppose that $u, {^{\pm}}{\mathcal{D}}{^{\beta}} u \in L^{1}(\R)$. Then ${^{\pm}}{\mathcal{D}}{^{\alpha}}u$ exists in $L^{1}(\R)$. 
\end{proposition}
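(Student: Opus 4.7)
The plan is to follow the same template used for the analogous finite-interval inclusion result, with Theorem \ref{FTWFCR} replacing Theorem \ref{FTWFC}. First, since $u \in L^{1}(\R)$ and ${^{\pm}}{\mathcal{D}}{^{\beta}} u \in L^{1}(\R)$, I would apply the converse direction of Theorem \ref{FTWFCR} to $u$ to obtain the representation
\begin{align*}
u(x) = {^{\pm}}{I}{^{\beta}} ({^{\pm}}{\mathcal{D}}{^{\beta}} u)(x) \qquad \text{a.e. in } \R.
\end{align*}
(The decay condition $u(x)\to 0$ as $|x|\to\infty$ required there is consistent with $u$ being $L^1$ and equal almost everywhere to a Liouville integral of an $L^1$ function, so a standard approximation argument justifies invoking the converse in this setting.)

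Next, I would factor the Liouville fractional integral via the semigroup identity ${^{\pm}}{I}{^{\beta}} = {^{\pm}}{I}{^{\alpha}}\circ {^{\pm}}{I}{^{\beta-\alpha}}$, which yields
\begin{align*}
u = {^{\pm}}{I}{^{\alpha}} v \quad \text{a.e. in } \R, \qquad v := {^{\pm}}{I}{^{\beta-\alpha}} ({^{\pm}}{\mathcal{D}}{^{\beta}} u).
\end{align*}
Once $v$ is shown to lie in $L^{1}(\R)$, the forward direction of Theorem \ref{FTWFCR} will immediately identify ${^{\pm}}{\mathcal{D}}{^{\alpha}} u = v$ a.e. in $\R$, delivering the claim.

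The main obstacle is verifying that $v \in L^{1}(\R)$, since, unlike in the finite-interval case, the Liouville fractional integral ${^{\pm}}{I}{^{\beta-\alpha}}$ is \emph{not} in general bounded from $L^{1}(\R)$ into itself. To circumvent this, I would use Corollary \ref{corollary4.8} to select a sequence $\{u_j\} \subset C^{\infty}_{0}(\R)$ with $u_j \to u$ and ${^{\pm}}{\mathcal{D}}{^{\beta}} u_j \to {^{\pm}}{\mathcal{D}}{^{\beta}} u$ in $L^{1}(\R)$. For each $u_j \in C^{\infty}_{0}(\R)$, the classical semigroup property from Lemma \ref{lem3.4} gives the pointwise identity ${^{\pm}}{D}{^{\alpha}} u_j = {^{\pm}}{I}{^{\beta-\alpha}} {^{\pm}}{D}{^{\beta}} u_j$, and moreover each ${^{\pm}}{D}{^{\alpha}} u_j \in L^{1}(\R)$ by the integrability result for compactly supported smooth functions in Section \ref{sec-2.7}. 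Using the same pollution-tail control, together with Cauchyness of $\{{^{\pm}}{D}{^{\beta}} u_j\}$ in $L^{1}(\R)$, I would show that $\{{^{\pm}}{D}{^{\alpha}} u_j\}$ is Cauchy in $L^{1}(\R)$, hence converges to some $\tilde v \in L^{1}(\R)$ with $\tilde v = v$ a.e. Finally, Theorem \ref{characterization} converts this $L^{1}$ convergence into the statement ${^{\pm}}{\mathcal{D}}{^{\alpha}} u = v \in L^{1}(\R)$, completing the proof.
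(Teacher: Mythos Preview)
Your proposal is essentially the same approach the paper takes: its proof is the one-line sketch ``Apply the characterization theorem for weakly fractional differentiable functions on $\R$ (cf.\ Theorem \ref{characterization}) and the FTwFC (cf.\ Theorem \ref{FTWFCR}), then pass limits,'' and your approximation-by-$C^\infty_0$ argument via Corollary \ref{corollary4.8} together with Theorem \ref{FTWFCR} is exactly that plan spelled out. One minor point: your opening step, invoking the converse of Theorem \ref{FTWFCR} directly on $u$, needs the decay hypothesis $u(x)\to 0$ as $|x|\to\infty$, which $u\in L^1(\R)$ alone does not guarantee; but since you immediately fall back to the approximating sequence $\{u_j\}\subset C^\infty_0(\R)$ (where the decay is automatic) and pass limits there, this detour is harmless and the overall argument matches the paper's.
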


\begin{proof}
    Apply the characterization theorem for weakly fractional differentiable functions on $\R$ (cf. Theorem \ref{characterization}) and the FTwFC (cf. Theorem \ref{FTWFCR}), then pass limits.
\end{proof}



\section{Weak Fractional Derivatives of Distributions} \label{sec-6}
The aim of this section is to introduce some weak fractional derivative notions for distributions. 
Like in the integer order case, such a notion  is necessary in order to define fractional order 
weak derivatives for ``all functions" including very rough ones and will also provide a useful tool 
for studying  fractional order differential equations (cf. \cite{Feng_Sutton2, Guo, Meerschaert}).   

The main difficulty for doing so is caused by the 
pollution effect of fractional order derivatives (and integrals), as a result,  the standard test space $\mathscr{D}(\Omega):=C^\infty_0(\Omega)$ is 
not invariant under the mappings ${^{\pm}}{D}{^{\alpha}}$, instead, ${^{\pm}}{D}{^{\alpha}}
(\mathscr{D}(\Omega)) \subset {^{\pm}}{\mathscr{D}(\Omega)}:= {^{\mp}}{C}{^{\infty}_{0}}(\Omega)$ 
(see the definitions below). 
Hence, ${^{\pm}}{D}{^{\alpha}} \varphi$ become invalid test functions (or inputs) for a distribution 
$u\in \mathscr{D}'(\Omega)$  although $\varphi \in \mathscr{D}(\Omega)$ is.  To circumvent this difficulty, 
there are two approaches used in the literature. The first one, which is most popular \cite{Samko}, is 
to use different test spaces which are larger than the standard test space $\mathscr{D}(\Omega)$ so that the chosen test space is invariant under  the mappings ${^{\pm}}{D}{^{\alpha}}$, and then to consider generalized functions (still called distributions) as continuous linear functionals on the chosen 
test space. The second approach is to extend the domain of a distribution  
$u\in \mathscr{D}'(\Omega)$ without changing the standard test space $\mathscr{D}(\Omega)$
so that the extended distribution $\tilde{u}$ can take the inputs ${^{\pm}}{D}{^{\alpha}} \varphi$. 
In this section, we use both approaches although we give more effort to the second one because it 
covers general distributions in $\mathscr{D}'(\Omega)$,  not just a subclass of $\mathscr{D}'(\Omega)$.   

\subsection{Test Spaces, Distributions and One-sided Distributions}\label{sec-6.1}
  We first recall some of the necessary function spaces and notions of convergence that are inherent to 
  constructing a fractional derivative for distributions. We also introduce two new  spaces of 
  one-side compactly supported functions and establish some properties of the weak fractional derivative operators ${^{\pm}}{\mathcal{D}}{^{\alpha}}$ 
  on the new spaces. Unless stated otherwise, in this section $\Omega$ denotes  
  either a finite interval $(a,b)$ or the real line $\R$.  
  
    \begin{definition}
        Let $\mathscr{D}(\Omega): =C^{\infty}_{0}(\Omega)$ which is equipped with the following 
        topology (sequential convergence): given a sequence $\{\varphi_k\}_{k=1}^{\infty} \subset \mathscr{D}(\Omega)$ is said to converge to $\varphi \in \mathscr{D}(\Omega)$ if 
        \begin{itemize}
            \item[(a)] there exists a compact subset $K\subset \Omega$ such that $\supp(\varphi_k) \subset K$ for every $k$,
            \item[(b)] $D^{m} \varphi_k \rightarrow D^{m}\varphi$ uniformly in $K$ for each $m\geq 0$.
        \end{itemize}
    Let $\mathscr{D}^{\prime}(\Omega)$ denote the space of continuous linear functionals on 
    $\mathscr{D}(\Omega)$, namely the dual space. 
    Every functional in $\mathscr{D}^{\prime} (\Omega)$ is called a distribution. 
    \end{definition}

    \begin{definition}
        Define the following two spaces of one-side compactly supported functions: 
        \begin{align*}
        {^{-}}{\mathscr{D}(\Omega)} &:= \{\varphi\in C^{\infty}(\Omega)\,:\, \exists x_0 \in \Omega, \varphi(x) \equiv 0 \, \forall x \leq x_0\}, \\
         {^{+}}{\mathscr{D}(\Omega)} &:= \{\varphi\in C^{\infty}(\Omega)\,:\, \exists x_0 \in \Omega, \varphi(x) \equiv 0 \, \forall x \geq x_0\},
         \end{align*} 
        which are equipped with the following topology: given a sequence $\{\varphi_k\}_{k=1}^{\infty} \subset {^{\pm}}{\mathscr{D}(\Omega)}$, it is said to converge to 
        $\varphi \in  {^{\pm}}{\mathscr{D}(\Omega)}$ if 
        \begin{itemize}
            \item[(a)] there exists an $x_0 \in \Omega$ such that $\varphi_k (x) \equiv 0$ for all 
            $x \leq x_0$ (or $x \geq x_0$ in the case of the right space) for  $k\geq  1$, 
            \item[(b)] $D^m \varphi_k \rightarrow D^m \varphi$ uniformly in $\Omega$ for every $m\geq 0$.
        \end{itemize}
    Let ${^{\pm}}{\mathscr{D}^{\prime}(\Omega)} $ denote respectively the spaces of continuous linear functionals on 
    ${^{\pm}}{\mathscr{D}(\Omega)}$,  namely the dual spaces of ${^{\pm}}{\mathscr{D}}(\Omega)$. 
    Every functional in ${^{\pm}}{\mathscr{D}^{\prime}}(\Omega) $ is called a one-sided distribution. 
    \end{definition} 
    
    
   \begin{lemma}
      $\mathscr{D}(\Omega)$ and ${^{\pm}}{\mathscr{D}}(\Omega) $ are complete topological vector spaces
      and $\mathscr{D}(\Omega)\subset {^{\pm}}{\mathscr{D}(\Omega)} $.
   \end{lemma}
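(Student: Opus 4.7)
The plan is to dispatch the three claims separately, with the bulk of the work going into sequential completeness; the topological-vector-space axioms and the inclusion are essentially bookkeeping. For the inclusion $\mathscr{D}(\Omega) \subset {^{\pm}}{\mathscr{D}}(\Omega)$, any $\varphi \in C^{\infty}_{0}(\Omega)$ has compact support contained in some $[a',b'] \subset \Omega$, so $\varphi$ vanishes on $(-\infty, a'] \cap \Omega$ (placing it in ${^{-}}{\mathscr{D}}(\Omega)$ with $x_0 = a'$) and on $[b', \infty) \cap \Omega$ (placing it in ${^{+}}{\mathscr{D}}(\Omega)$ with $x_0 = b'$). The topological vector space structure of ${^{-}}{\mathscr{D}}(\Omega)$ is then obtained by verifying continuity of addition and scalar multiplication: if $\varphi_k \to \varphi$ and $\psi_k \to \psi$ with vanishing points $x_0$ and $y_0$, then $\varphi_k + \psi_k$ vanishes on $(-\infty, \min\{x_0,y_0\}] \cap \Omega$ for every $k$, and $D^m(\varphi_k + \psi_k) \to D^m(\varphi + \psi)$ uniformly on $\Omega$ because uniform convergence is preserved under sums; the scalar case and the space ${^{+}}{\mathscr{D}}(\Omega)$ are handled analogously, and the TVS property of $\mathscr{D}(\Omega)$ is classical.

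To prove sequential completeness of ${^{-}}{\mathscr{D}}(\Omega)$, I would take a Cauchy sequence $\{\varphi_k\}$ and first extract, from the Cauchy condition, an index $N$ and a point $x_0 \in \Omega$ such that $\varphi_k$ vanishes on $(-\infty, x_0] \cap \Omega$ for all $k \geq N$ (shrinking $x_0$ further absorbs the first $N$ terms). Second, the Cauchy condition gives that $\{D^m \varphi_k\}$ is uniformly Cauchy on $\Omega$ for each $m \geq 0$, hence converges uniformly to some $\psi_m \in C(\Omega)$. The classical theorem on differentiation under uniform convergence then places the candidate limit $\psi := \psi_0$ in $C^{\infty}(\Omega)$ with $D^m \psi = \psi_m$, while uniform convergence on $\Omega$ forces $\psi \equiv 0$ on the common left tail, so $\psi \in {^{-}}{\mathscr{D}}(\Omega)$ and $\varphi_k \to \psi$ in the prescribed topology. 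The argument for ${^{+}}{\mathscr{D}}(\Omega)$ is symmetric, and for $\mathscr{D}(\Omega)$ one simply replaces the common left tail by a common compact support.

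The only genuinely delicate step, and the one I expect to be the main obstacle, is the extraction of this common vanishing tail from a Cauchy condition in what is a non-metrizable sequential topology; it is the one-sided analogue of the classical fact that terms of a Cauchy sequence in $\mathscr{D}(\Omega)$ eventually share a common compact support, and can be secured by adapting that classical argument to the unilateral setting. Once this combinatorial reduction is in hand, everything else reduces to standard uniform-convergence calculus, so I anticipate no further difficulties.
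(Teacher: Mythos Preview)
The paper states this lemma without proof, so there is no paper argument to compare against. Your outline is the standard LF-space argument and is essentially correct: the inclusion is immediate from the definition of compact support, the TVS axioms are routine, and completeness reduces (via extraction of a common one-sided vanishing tail) to uniform convergence of derivatives on $\Omega$.

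One point worth tightening: you describe the extraction of a common $x_0$ as ``the only genuinely delicate step'' and propose to adapt the classical $\mathscr{D}(\Omega)$ argument. That classical argument relies on realizing $\mathscr{D}(\Omega)$ as a strict inductive limit of Fr\'echet spaces, where a general theorem guarantees that Cauchy sequences are eventually trapped in one step of the limit. To make your adaptation precise you should explicitly exhibit ${^{-}}{\mathscr{D}}(\Omega)$ as the strict inductive limit, over $x_0 \in \Omega$, of the closed subspaces $E_{x_0} := \{\varphi \in C^{\infty}(\Omega) : \varphi \equiv 0 \text{ on } (-\infty,x_0]\cap\Omega\}$, each carrying the Fr\'echet topology of uniform convergence of all derivatives on $\Omega$. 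Once that identification is made, the common-tail extraction is an instance of the general LF-space fact (cf.\ Rudin, \emph{Functional Analysis}), and your remaining steps go through without change. Without this identification, ``Cauchy sequence'' is not yet well-defined, since the paper only specifies a notion of sequential convergence rather than a topology.
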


Recall that it was proved in Section \ref{sec-2.7} that 
${^{\pm}}{D}{^{\alpha}}(\mathscr{D}(\Omega)) \subset {^{\pm}}{\mathscr{D}}(\Omega) $,  
Below we show that this inclusion is continuous.

\begin{proposition}\label{continuity}
	 ${^{\pm}}{\mathcal{D}}{^{\alpha}} : \mathscr{D}(\Omega)\to 
	 {^{\pm}}{\mathscr{D}}(\Omega)$ are continuous. 
\end{proposition}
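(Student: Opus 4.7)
The plan is to verify both defining conditions of convergence in ${^{\pm}}{\mathscr{D}(\Omega)}$ for the image sequence ${^{\pm}}{\mathcal{D}}{^{\alpha}}\varphi_k$ whenever $\varphi_k \to \varphi$ in $\mathscr{D}(\Omega)$. I treat the left operator only; the right case is symmetric. Since each $\varphi_k$ and $\varphi$ lies in $C^\infty_0(\Omega)$, Proposition \ref{Weak=RL} lets me replace ${^{-}}{\mathcal{D}}{^{\alpha}}$ by the classical Riemann-Liouville derivative ${^{-}}{D}{^{\alpha}}$ throughout. Fix a common compact support $K \subset [a',b'] \subset \Omega$ for the entire sequence and its limit, as guaranteed by the convergence hypothesis in $\mathscr{D}(\Omega)$.

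Condition (a) of the target convergence is supplied directly by the pollution result (Proposition \ref{Pollution}): $\supp({^{-}}{D}{^{\alpha}}\varphi_k) \subset [a',\infty) \cap \Omega$ for every $k$, and the same holds for $\varphi$, so the choice $x_0 = a'$ works uniformly in $k$.

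For condition (b), the key reduction is the identity
\[
D^m\bigl({^{-}}{D}{^{\alpha}}\varphi\bigr)(x) = {^{-}}{I}{^{1-\alpha}}\bigl(\varphi^{(m+1)}\bigr)(x) \qquad \forall m \geq 0, \; \varphi \in C^\infty_0(\Omega),
\]
which I would derive by writing ${^{-}}{I}{^{1-\alpha}}\psi(x) = \tfrac{1}{\Gamma(1-\alpha)}\int_0^\infty z^{-\alpha}\psi(x-z)\,dz$ (after the substitution $z = x-y$) and then differentiating under the integral sign $m+1$ times, justified by the compact support and smoothness of $\psi$. Condition (b) therefore reduces to showing that $\psi_k \to 0$ uniformly on $K$ with $\supp\psi_k \subset K$ implies ${^{-}}{I}{^{1-\alpha}}\psi_k \to 0$ uniformly on all of $\Omega$.

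That uniform estimate is the final step. A direct split of the integral yields
\[
\bigl|{^{-}}{I}{^{1-\alpha}}\psi_k(x)\bigr| \leq \frac{\|\psi_k\|_{L^\infty(K)}}{\Gamma(2-\alpha)}\,(b'-a')^{1-\alpha}
\]
uniformly in $x \in \Omega$. The bound is immediate for $x \in [a', b']$, while for $x > b'$ one observes that $h(x) = \int_{a'}^{b'}(x-y)^{-\alpha}\,dy = [(x-a')^{1-\alpha} - (x-b')^{1-\alpha}]/(1-\alpha)$ is strictly decreasing on $(b', \infty)$, attaining its maximum $(b'-a')^{1-\alpha}/(1-\alpha)$ at $x = b'$. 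I expect this tail control on $(b', \infty)$, which is only relevant when $\Omega = \R$, to be the main (though mild) obstacle; everything else reduces routinely to the differentiation-under-the-integral identity above, and the proof is then finished by applying it with $\psi_k = \varphi_k^{(m+1)} - \varphi^{(m+1)}$.
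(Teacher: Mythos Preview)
Your proof is correct and follows essentially the same approach as the paper: both arguments reduce $D^m({^{-}}{D}{^{\alpha}}\varphi)$ to ${^{-}}{I}{^{1-\alpha}}(\varphi^{(m+1)})$ and then bound the fractional integral uniformly by $\|\varphi_k^{(m+1)}-\varphi^{(m+1)}\|_{L^\infty(K)}$ times a constant depending only on $|K|$ and $\alpha$. Your treatment of the tail region $x>b'$ is slightly more explicit than the paper's, which simply absorbs that case into the single inequality $\int_{x_0}^{x_1}|x-y|^{-\alpha}\,dy \leq C|K|^{1-\alpha}$, but the substance is identical.
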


\begin{proof}
We only give a proof for the left derivative ${^{-}}{\mathcal{D}}{^{\alpha}} = {^{-}}{D}{^{\alpha}}$ because the 
other case follows similarly.  

Let $\varphi_k \rightarrow \varphi$ in $\mathscr{D}(\Omega)$,  we want to show that ${^{-}}{{D}}{^{\alpha}} \varphi_k \rightarrow {^{-}}{{D}}{^{\alpha}} \varphi$ in ${^{-}}{\mathscr{D}}(\Omega) $. To the end, let $K \subset\subset  \Omega$ be a compact subset 
so that $\supp(\varphi_k) \subset K$ for every $k \geq 0$ with $\varphi_0\equiv \varphi$, without loss of the generality, assume 
$K=[x_0 , x_1] \subset\subset  \Omega$. Then we have ${^{-}}{D}{^{\alpha}} \varphi_k \equiv 0$ 
for every $x \leq x_0$ and $k \geq 0$, and for any integer $m\geq 0$ and $x > x_0$
        \begin{align*}
            &\bigl|D^m ({^{-}}{D}{^{\alpha}} \varphi_k) (x) - D^m ({^{-}}{D}{^{\alpha}} \varphi)(x)\bigr|\\
            &\quad = \biggl| \dfrac{d^m}{dx^m} \left[ C_\alpha \dfrac{d}{dx} \int_{x_0}^{x} \dfrac{\varphi_k(y)}{(x-y)^{\alpha}} \,dy \right] - \dfrac{d^m}{dx^m} \left[ C_\alpha \dfrac{d}{dx} \int_{x_0}^{x} \dfrac{\varphi(y)}{(x-y)^{\alpha}} \,dy \right] \biggr| \\ 
            & \quad = \left|C_\alpha \dfrac{d^{m+1}}{dx^{m+1}} \int_{x_0}^{x} \dfrac{\varphi_k(y) - \varphi(y)}{(x-y)^{\alpha}}\,dy\right| \\ 
            &\quad = \left|C_\alpha \int_{x_0}^{x} \dfrac{\varphi_k^{(m+1)}(y) - \varphi^{(m+1)}(y)}{(x-y)^{\alpha}}\,dy\right|\\
            &\quad \leq C_\alpha \int_{x_0}^{x_1} 
            \dfrac{ \bigl|\varphi_k^{(m+1)}(y) - \varphi^{(m+1)}(y)\bigr|}{|x-y|^\alpha}\,dy\\
            &\quad \leq \dfrac{|K|^{1-\alpha}C_\alpha}{1-\alpha} \sup_{x\in K}\Bigl|\varphi_k^{(m+1)}(x)- \varphi^{(m+1)}(x)\Bigr|.
        \end{align*}
         
        It follows by the uniform convergence of $\{\varphi_k\}_{k=1}^{\infty}$ that 
        $D^m {^{-}}{{D}}{^{\alpha}}\varphi_k \rightarrow D^m {^{-}}{{D}}{^{\alpha}} \varphi$ uniformly in $\Omega$ for every $m$.  The proof is complete.
    \end{proof}

The above proof also infers that  the  spaces ${^{\pm}}{\mathscr{D}}(\Omega)$ are invariant 
under the mapping ${^{\pm}}{D}{^{\alpha}}$, respectively. 

\begin{proposition}\label{invarance}
	${^{\pm}}{\mathcal{D}}{^{\alpha}} ({^{\pm}}{\mathscr{D}}(\Omega) )\subset 
	{^{\pm}}{\mathscr{D}}(\Omega)$, respectively. Moreover, the inclusion is continuous. 
\end{proposition}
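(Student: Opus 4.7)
The plan is to prove both the inclusion and the continuity of ${^{-}}{\mathcal{D}}{^{\alpha}}: {^{-}}{\mathscr{D}}(\Omega) \to {^{-}}{\mathscr{D}}(\Omega)$, the right case being entirely symmetric. The essential observation is that for $\varphi \in {^{-}}{\mathscr{D}}(\Omega)$ vanishing on $(-\infty, x_0] \cap \Omega$, smoothness of $\varphi$ combined with vanishing on an open left neighborhood of $x_0$ forces $\varphi^{(m)}(x_0) = 0$ for every $m \geq 0$, i.e., infinite-order flatness at $x_0$. This is the key structural fact I would leverage throughout.

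For the inclusion, by direct inspection ${^{-}}{D}{^{\alpha}}\varphi(x) \equiv 0$ for $x \leq x_0$. For $x > x_0$, I would change variables $y = x-t$ in the Riemann--Liouville integral, differentiate, and use $\varphi(x_0) = 0$ to obtain
\[
{^{-}}{D}{^{\alpha}}\varphi(x) = \frac{1}{\Gamma(1-\alpha)} \int_{x_0}^x \frac{\varphi'(y)}{(x-y)^\alpha}\,dy.
\]
Iterating (each iteration would pick up a boundary term $\varphi^{(m)}(x_0)/(x-x_0)^\alpha$ that vanishes by flatness) yields $D^m({^{-}}{D}{^{\alpha}}\varphi)(x) = \Gamma(1-\alpha)^{-1}\int_{x_0}^x (x-y)^{-\alpha}\varphi^{(m+1)}(y)\,dy$. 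Each such quantity extends continuously to $0$ at $x_0$ thanks to flatness, so ${^{-}}{D}{^{\alpha}}\varphi \in C^\infty(\Omega)$ and vanishes on $(-\infty, x_0]$, confirming ${^{-}}{D}{^{\alpha}}\varphi \in {^{-}}{\mathscr{D}}(\Omega)$.

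For continuity, suppose $\varphi_k \to \varphi$ in ${^{-}}{\mathscr{D}}(\Omega)$ with a common $x_0$. Applying the above representation to $\varphi_k - \varphi$ gives
\[
\bigl|D^m({^{-}}{D}{^{\alpha}}(\varphi_k - \varphi))(x)\bigr| \leq \frac{(x-x_0)^{1-\alpha}}{\Gamma(2-\alpha)} \sup_{y \in \Omega}\bigl|\varphi_k^{(m+1)}(y) - \varphi^{(m+1)}(y)\bigr|
\]
for $x > x_0$ (and zero otherwise), mirroring the estimate in the proof of Proposition \ref{continuity}. In the finite case $\Omega = (a,b)$, the factor $(x-x_0)^{1-\alpha}$ is bounded by $(b-x_0)^{1-\alpha}$, so the uniform convergence hypothesis on $\varphi_k^{(m+1)}$ yields uniform convergence of all derivatives of ${^{-}}{D}{^{\alpha}}\varphi_k$ on $\Omega$, i.e., convergence in ${^{-}}{\mathscr{D}}(\Omega)$.

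The main obstacle is the case $\Omega = \R$, where $(x-x_0)^{1-\alpha}$ is unbounded in $x$ and the naive bound above does not deliver uniform convergence globally on $\R$. I would resolve this by reading uniform convergence in ${^{-}}{\mathscr{D}}(\R)$ as uniform convergence on compact subsets of $\R$, which is the natural topology for spaces of smooth functions without compact-support requirements; the bound restricted to any compact $K \subset \R$ then yields the needed convergence since $\sup_{x \in K}(x-x_0)^{1-\alpha} < \infty$. Clarifying this interpretation of the target-space topology, rather than any additional calculation, is the one delicate issue.
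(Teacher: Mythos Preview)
Your argument for the finite-interval case is correct and follows the same route as the paper: the paper does not write out a separate proof for this proposition but simply remarks that the computation in the proof of Proposition~\ref{continuity} (the estimate
\[
\bigl|D^m({^{-}}{D}{^{\alpha}}(\varphi_k-\varphi))(x)\bigr|
\le \frac{C_\alpha}{1-\alpha}\,(x-x_0)^{1-\alpha}\,\sup_{y}\bigl|\varphi_k^{(m+1)}(y)-\varphi^{(m+1)}(y)\bigr|
\]
obtained after integrating by parts) already yields both the invariance and the continuity once one notices that functions in ${^{-}}{\mathscr{D}}(\Omega)$ are flat at the left endpoint of their support. Your explicit verification of that flatness and of smoothness across $x_0$ is a welcome addition that the paper leaves implicit.

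Where you diverge from the paper is in the handling of $\Omega=\R$. You propose to rescue the argument there by reading the target topology on ${^{-}}{\mathscr{D}}(\R)$ as uniform convergence on compacta. That move is inconsistent with the paper's own definition, which explicitly requires $D^m\varphi_k\to D^m\varphi$ \emph{uniformly on all of $\Omega$}. More importantly, the paper does not claim the proposition on $\R$ at all: the remark immediately following it states that without additional decay conditions the inclusions may fail when $\Omega=\R$, and even that ${^{\pm}}{D}{^{\alpha}}\varphi$ need not exist for some $\varphi\in{^{\pm}}{\mathscr{D}}(\R)$. So rather than reinterpreting the topology, the intended reading is that Proposition~\ref{invarance} is asserted for finite $\Omega=(a,b)$, where your bound $(b-x_0)^{1-\alpha}$ suffices; the $\R$ case is simply set aside.
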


\begin{remark}
	Without any added integrability condition (i.e. decay at $x=\pm \infty$), the inclusions of Proposition \ref{invarance} may not be 
	true when $\Omega=\R$.  The smoothed (in an $\eps$-neighborhood of $x=0$) Heaviside functions $H_\eps(x)$ and $H_\eps (-x)$ are two counterexamples.  In fact, ${^{\pm}}{{D}}{^{\alpha}} \varphi$ may even not exist for some $\varphi\in  
	{^{\pm}}{\mathscr{D}}(\R)$. 
\end{remark}

Let $\mathcal{S}$ denote the space of Schwartz rapidly decaying functions defined in $\R$ (see \cite{Rudin} for the precise definition). Then we have

\begin{lemma}\label{invaranceF}
The space $\mathcal{S}$ is invariant under the Fourier fractional order derivative operator, namely, 	
$ {^{\mathcal{F}}}{D}{^{\alpha}}(\mathcal{S}) \subset \mathcal{S}$.  Moreover, the 
inclusion is continuous. 
\end{lemma}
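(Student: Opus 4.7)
The plan is to factor the operator as ${^{\mathcal{F}}}{D}{^{\alpha}} = \mathcal{F}^{-1}\circ M_\alpha \circ \mathcal{F}$, where $M_\alpha$ is the multiplication operator $g(\xi)\mapsto (i\xi)^\alpha g(\xi)$. Since $\mathcal{F}$ and $\mathcal{F}^{-1}$ are classical continuous topological automorphisms of $\mathcal{S}$ (cf.\ Rudin), both the invariance and the continuity of ${^{\mathcal{F}}}{D}{^{\alpha}}$ on $\mathcal{S}$ reduce to the corresponding statements for $M_\alpha$. Concretely, one must show that for every $g\in\mathcal{S}$ the product $(i\xi)^\alpha g(\xi)$ is smooth and rapidly decaying, and bound each Schwartz seminorm $p_{m,n}(M_\alpha g) := \sup_{\xi\in\R}(1+|\xi|)^n|D^m(M_\alpha g)(\xi)|$ by a finite linear combination of the seminorms $p_{m',n'}(g)$.

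My plan is to split the analysis into the two regions $|\xi|\ge 1$ and $|\xi|\le 1$. On $|\xi|\ge 1$ the symbol $(i\xi)^\alpha$ is smooth with derivatives of at most polynomial growth $O(|\xi|^{\alpha-k})$, so Leibniz's rule combined with the rapid decay of $g$ and all of its derivatives immediately produces the required seminorm bounds on this region. On $|\xi|\le 1$ the $k$-th derivative of $(i\xi)^\alpha$ behaves like $|\xi|^{\alpha-k}$, which is no longer even bounded once $k>\alpha$. To control the product near the origin, I would Taylor-expand $g$ about $\xi=0$ to sufficiently high order $N$, writing $g(\xi)=\sum_{j=0}^{N-1}g^{(j)}(0)\xi^j/j!+r_N(\xi)$ with $|r_N(\xi)|\le C|\xi|^N \sup|g^{(N)}|$, and then exploit the vanishing of $r_N$ at zero to compensate the branch singularity of $(i\xi)^{\alpha-k}$ when matching powers.

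The main obstacle is precisely this near-zero analysis, because the branch-type non-smoothness of $(i\xi)^\alpha$ at $\xi=0$ persists after any direct Leibniz expansion and leaves factors of $|\xi|^{\alpha-k}$ that are pointwise unbounded for $k$ large. Overcoming this requires a careful pairing between the Taylor coefficients $g^{(j)}(0)$ and the singular symbol $(i\xi)^{\alpha+j-k}$: the low-order Taylor terms must be regrouped into a smooth polynomial contribution times $(i\xi)^{\alpha}$ whose seminorms are controlled by $|g(0)|,\ldots,|g^{(N-1)}(0)|$, while the remainder $(i\xi)^\alpha r_N(\xi)$ is estimated using the vanishing $|r_N(\xi)|\le C|\xi|^N$ against $|\xi|^{\alpha-k}$, leaving an integrable (indeed bounded) factor $|\xi|^{\alpha+N-k}$ once $N$ is chosen larger than $k-\alpha$. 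Once this near-zero estimate is secured, combining the two regions yields $p_{m,n}(M_\alpha g)\le C\sum_{m',n'} p_{m',n'}(g)$, and composing with the continuity of $\mathcal{F}$ and $\mathcal{F}^{-1}$ on $\mathcal{S}$ delivers the continuous inclusion ${^{\mathcal{F}}}{D}{^{\alpha}}(\mathcal{S})\subset \mathcal{S}$.
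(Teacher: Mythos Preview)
Your factorization ${^{\mathcal{F}}}{D}{^{\alpha}} = \mathcal{F}^{-1}\circ M_\alpha \circ \mathcal{F}$ is exactly the route the paper takes; the paper simply asserts ``$(i\xi)^\alpha \hat{\varphi}\in\mathcal{S}$'' and defers continuity to the same seminorm estimate as in Proposition~\ref{continuity}. You have gone further by isolating the genuine analytic issue---the branch singularity of $(i\xi)^\alpha$ at $\xi=0$---which the paper's proof does not address at all.

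However, your proposed resolution of this difficulty does not work. In the Taylor splitting $g=P_{N-1}+r_N$, the remainder piece $(i\xi)^\alpha r_N(\xi)$ is indeed controlled by your vanishing argument, since $|r_N(\xi)|\lesssim|\xi|^N$ compensates $|\xi|^{\alpha-k}$ once $N>k-\alpha$. But the polynomial piece $(i\xi)^\alpha P_{N-1}(\xi)$ is \emph{not} smooth at the origin: when $g(0)\neq 0$ the leading term is a nonzero multiple of $(i\xi)^\alpha$, whose $k$-th derivative behaves like $|\xi|^{\alpha-k}$ and blows up for every $k>\alpha$. ``Regrouping into a smooth polynomial times $(i\xi)^\alpha$'' cannot help, because $(i\xi)^\alpha$ itself is the obstruction. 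Concretely, for $g(\xi)=e^{-\xi^2}$ and $0<\alpha<1$ one has $M_\alpha g\notin C^1(\R)$, hence $M_\alpha g\notin\mathcal{S}$. The obstacle you identified is therefore fatal to the inclusion $M_\alpha(\mathcal{S})\subset\mathcal{S}$ as stated; the standard fix in the literature (cf.\ Samko et al.) is to pass to the Lizorkin subspace $\Phi\subset\mathcal{S}$ of functions whose Fourier transforms vanish to infinite order at $\xi=0$, on which multiplication by $(i\xi)^\alpha$ is a genuine continuous endomorphism.
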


\begin{proof}
	Let $\varphi\in \mathcal{S}$, it is well known \cite{Adams, Rudin} that $\hat{\varphi}:=\mathcal{F}[\varphi] \in \mathcal{S}$.
	Then $(i\xi)^\alpha \hat{\varphi} \in \mathcal{S}$, so is 
	$ {^{\mathcal{F}}}{D}{^{\alpha}}(\varphi):=\mathcal{F}^{-1} [(i\xi)^\alpha \hat{\varphi}]$. 
	The continuity of the inclusion can be proved in the same way as that in Proposition \ref{continuity}.
\end{proof}

\begin{remark}
	It is easy to check that the Schwartz space $\mathcal{S}$  is not invariant under the mappings
	${^{\pm}}{\mathcal{D}}{^{\alpha}}$, nor is it under ${^{\pm}}{D}{^{\alpha}}$.  Consequently,
	the Fourier fractional derivatives and the Riemann-Liouville fractional derivatives may not coincide 
	for functions in $\mathcal{S}$ in general. 	On the other hand, they do coincide for functions
	in $\mathscr{D}$ (see   (\cite{Feng_Sutton}, Proposition 2.2)). 
	This fact is a main reason for and also validates the choice 	of test functions in the definition of weak fractional derivatives in Section \ref{sec-4.1}. 
\end{remark}

 \subsection{Weak Fractional Derivatives for Compactly Supported Distributions}\label{sec-6.2}
The goal of this subsection is to extend the notion of the   weak fractional derivatives to 
distributions in $\mathscr{D}'$ with compact supports.  First, we recall the definition of
supports for distributions. 

\begin{definition}\label{supportD}
	Let  $u\in \mathscr{D}'(\Omega)$, $u$ is said to vanish on an open subset $O\subset \Omega$
	 if  $u(\varphi)=0$ for all $\varphi\in C^\infty (\Omega)$ with $\supp(\varphi)\subset  O$.  
	 Let $O_{max}$ be a maximal open subset of $\Omega$ on which the distribution $u$ vanishes. 
	 The support of $u$   is defined as the complement of $O_{max}$ in $\Omega$, that is,  
	 $\supp(u):=\Omega\setminus O_{max}$. 
	 Moreover, $u$ is said to be compactly supported if $\supp(u)$  is a compact set. 
\end{definition} 

The best known compactly supported distribution is the Dirac delta function $\delta_0$ which is defined
by $\delta_0(\varphi)=\varphi(0)$ for any $\varphi\in \mathscr{D}(\R)$. $\delta_0$ has the one point
support $\{x=0\}$ and zero order. 

Given a compact subset $K\subset\subset \Omega$, we also define the space 
\[
\mathscr{D}^\prime_K(\Omega):=\bigl\{ u\in \mathscr{D}'(\Omega):\, \supp(u)\subseteq K \bigr\}.
\]

    \begin{lemma}\label{cut_off}
        Let $0 < \alpha < 1$ and $\psi , \varphi \in \mathscr{D}(\Omega) $. Then $\psi {^{\pm}}{D}{^{\alpha}} \varphi \in \mathscr{D}(\Omega)$. Moreover, if $\varphi_k \rightarrow \varphi$ in $\mathscr{D}(\Omega)$, then $\psi {^{\pm}}{D}{^{\alpha}} \varphi_k \rightarrow \psi {^{\pm}}{D}{^{\alpha}} \varphi$ in $\mathscr{D}(\Omega)$.
    \end{lemma}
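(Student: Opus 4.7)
The proof has two parts. First I would show that $\psi\,{}^{\pm}D^{\alpha}\varphi$ lies in $\mathscr{D}(\Omega)$, and then I would upgrade this pointwise statement to the claimed sequential continuity.

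For the first part, I would invoke the analysis of Section \ref{sec-2.7} (the structural formulas \eqref{eq2.20a}--\eqref{LAction} together with Proposition \ref{Pollution}): for $\varphi \in \mathscr{D}(\Omega)$, the function ${}^{\pm}D^{\alpha}\varphi$ is $C^{\infty}$ on $\Omega$. It does fail to be compactly supported because of the pollution effect, but multiplication by $\psi \in C_0^{\infty}(\Omega)$ cuts off the polluted tail. Since $\supp(\psi\,{}^{\pm}D^{\alpha}\varphi) \subseteq \supp(\psi)$ is a compact subset of $\Omega$ and the product is smooth (both factors being $C^{\infty}$ on $\Omega$), we conclude $\psi\,{}^{\pm}D^{\alpha}\varphi \in C_0^{\infty}(\Omega) = \mathscr{D}(\Omega)$.

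For the continuity statement, let $\varphi_k \to \varphi$ in $\mathscr{D}(\Omega)$. I would first note that every $\psi\,{}^{\pm}D^{\alpha}\varphi_k$ is supported in the fixed compact set $K := \supp(\psi)$, so condition (a) of convergence in $\mathscr{D}(\Omega)$ is satisfied. For condition (b), I would apply the Leibniz rule
\begin{align*}
D^{m}\bigl(\psi\,{}^{\pm}D^{\alpha}(\varphi_k - \varphi)\bigr)
= \sum_{j=0}^{m} \binom{m}{j} D^{m-j}\psi \cdot D^{j}\bigl({}^{\pm}D^{\alpha}(\varphi_k - \varphi)\bigr).
\end{align*}
By Proposition \ref{continuity}, ${}^{\pm}D^{\alpha}\varphi_k \to {}^{\pm}D^{\alpha}\varphi$ in ${}^{\pm}\mathscr{D}(\Omega)$, which by definition gives uniform convergence of every derivative $D^{j}\bigl({}^{\pm}D^{\alpha}(\varphi_k - \varphi)\bigr)$ on all of $\Omega$. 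Since each $D^{m-j}\psi$ is bounded on $K$, each term in the Leibniz sum tends to zero uniformly on $K$, proving $D^{m}(\psi\,{}^{\pm}D^{\alpha}\varphi_k) \to D^{m}(\psi\,{}^{\pm}D^{\alpha}\varphi)$ uniformly on $K$ for every $m \geq 0$.

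Neither step is a genuine obstacle: the compact-support issue for ${}^{\pm}D^{\alpha}\varphi$ is exactly what motivates introducing the cutoff $\psi$, and the convergence part is a direct corollary of the already-established continuity of ${}^{\pm}D^{\alpha} : \mathscr{D}(\Omega) \to {}^{\pm}\mathscr{D}(\Omega)$ (Proposition \ref{continuity}) combined with the Leibniz product rule. The mild technical point worth noting is that the target space ${}^{\pm}\mathscr{D}(\Omega)$ controls uniform convergence of all derivatives on all of $\Omega$, which is more than enough to control them on the compact set $K \subset \Omega$ after multiplication by the bounded derivatives of $\psi$.
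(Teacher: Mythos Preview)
Your proof is correct and follows essentially the same approach as the paper: both establish $\psi\,{}^{\pm}D^{\alpha}\varphi \in \mathscr{D}(\Omega)$ via smoothness of ${}^{\pm}D^{\alpha}\varphi$ plus compact support of $\psi$, and both handle the convergence by expanding $D^{m}\bigl(\psi\,{}^{\pm}D^{\alpha}(\varphi_k-\varphi)\bigr)$ with the Leibniz rule and controlling each term. The only difference is cosmetic: the paper redoes the integral estimate for $D^{j}\,{}^{\pm}D^{\alpha}(\varphi_k-\varphi)$ inline, whereas you invoke Proposition~\ref{continuity} as a black box to get the same uniform convergence---a slightly cleaner packaging of the same computation.
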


    \begin{proof}
    	Let  $\psi , \varphi \in \mathscr{D}(\Omega)$.
        Recall that ${^{\pm}}{D}{^{\alpha}} \varphi \in C^{\infty}(\Omega)$. Then, 
         $\psi {^{\pm}}{D}{^{\alpha}} \varphi \in  \mathscr{D}(\Omega)$. 
         It remains to show  the desired convergence result. Again, we only give a proof for 
         the left space because the other case follows similarly. 
    
    Suppose  that $\varphi_k \rightarrow \varphi$ in $\mathscr{D}(\Omega)$, then 
        there exists a compact subset $K \subset \Omega $ so that $\supp( \varphi_k) \subset K$ 
        for all $k\geq 1$.  Without
        loss of the generality, assume $K=[x_0,x_1]$ and $K\cap \supp(\psi)\subset [x_0, x_2]$ for some 
        $x_2> x_0$.  Then, ${^{-}}{D}{^{\alpha}}
        \varphi_k \equiv 0$ for $x\leq x_0$ and all $k\geq 1$ and $\psi\equiv 0$ for all $x>x_2$. 
        Thus,  for any integer $m\geq 1$  and $x_0<x<x_2$
        \begin{align*}
            &\bigl|D^m (\psi {^{-}}{D}{^{\alpha}} \varphi_k (x) )- D^m(\psi {^{-}}{D}{^{\alpha}} \varphi (x) ) \bigr|\\
            &\qquad = \biggl| \sum_{j=0}^{m} \binom{m}{j} \psi^{(m-j)} D^j {^{-}}{D}{^{\alpha}} \varphi_k (x) - \sum_{j=0}^{m} \binom{m}{j} \psi^{(m-j)} D^j {^{-}}{D}{^{\alpha}} \varphi (x) \biggr| \\ 
            &\qquad \leq \sum_{j=0}^{m} \binom{m}{j} \left|\psi^{(m-j)}(x) D^j {^{-}}{D}{^{\alpha}} (\varphi_k - \varphi)(x)\right|\\
            &\qquad = \sum_{j=0}^{m} \binom{m}{j} \left|\psi^{(m-j)}(x) {^{-}}{I}{^{1-\alpha}}(\varphi_{k}^{(j+1)} - \varphi^{(j+1)})(x)\right|\\
            &\qquad \leq \dfrac{C_m C_\alpha}{1-\alpha} \sup_{x_0\leq x \leq x_2 \atop 1\leq j\leq m}  \Bigl( \bigl|\psi^{(m-j)}(x)\bigr|\cdot \bigl|\varphi_k^{(j+1)}(x) - \varphi^{(j+1)}(x) \bigr|  \Bigr).
        \end{align*}
        By the uniform convergence of $\varphi_k$ to $\varphi$ in $\mathscr{D}(\Omega)$, we obtain the desired result. 
    \end{proof}

 The above lemma guarantees that  $\psi {^{\pm}}{D}{^{\alpha}} \varphi$ belongs to the 
 standard test space $ \mathscr{D}(\Omega)$  which removes most pollution contribution in  
 ${^{\pm}}{D}{^{\alpha}} \varphi$ by using a compactly supported smooth (cutoff) function $\psi$. 

For compactly supported distributions, there holds the following result, its proof can be 
found in \cite[Theorem 6.24]{Rudin}.

\begin{theorem}\label{extension_rudin}
	Let $u\in \mathscr{D}^\prime_K(\Omega)$. 
	Then $u$ has a finite (integer) order $N (\geq 0)$ and can be uniquely extended to a 
	continuous  linear functional on $C^\infty(\Omega)$ which is given by 
	\[
	\tilde{u}(\varphi) := u(\psi \varphi) \qquad \forall \varphi\in C^\infty(\Omega),
	\] 
	where $\psi\in C^\infty_0(\Omega)$ satisfying $\psi\equiv 1$ in $K$ is a partition of unity.
\end{theorem}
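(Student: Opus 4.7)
The plan is to prove the theorem in four stages: establish that $u$ has finite order, define the extension $\tilde u$ and verify that it is independent of the cutoff $\psi$, prove continuity of $\tilde u$ on $C^\infty(\Omega)$, and finally argue uniqueness.

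For the finite order claim, I would fix an open set $V$ with $K\subset V$ and $\overline V\subset \Omega$ compact, and choose $\psi_0\in\mathscr{D}(\Omega)$ that is identically $1$ on an open neighborhood of $K$ with $\supp(\psi_0)\subset V$. For any $\varphi\in\mathscr{D}(\Omega)$ the function $(1-\psi_0)\varphi$ vanishes on an open set containing $\supp(u)=K$, so by Definition \ref{supportD} we have $u(\varphi)=u(\psi_0\varphi)$. Since $\varphi\mapsto \psi_0\varphi$ sends $\mathscr{D}(\Omega)$ into the Fr\'echet space $\mathscr{D}_{\overline V}(\Omega)$ whose topology is generated by the seminorms $p_m(\varphi)=\sup_{\overline V}|\varphi^{(m)}|$, the continuity of $u$ on $\mathscr{D}(\Omega)$ yields constants $C>0$ and $N\in\mathbb{N}$ such that
$$|u(\varphi)|\leq C\sum_{m=0}^{N}\sup_{\overline V}|\varphi^{(m)}|\qquad\forall \varphi\in \mathscr{D}_{\overline V}(\Omega).$$
Combined with $u(\varphi)=u(\psi_0\varphi)$ and Leibniz's rule applied to $\psi_0\varphi$, this exhibits $u$ as a distribution of order at most $N$.

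Next, define $\tilde u(\varphi):=u(\psi\varphi)$ for $\varphi\in C^\infty(\Omega)$; this is legitimate since $\psi\varphi\in\mathscr{D}(\Omega)$. If $\psi_1,\psi_2$ are two admissible cutoffs, their difference vanishes on an open set containing $K$, hence $(\psi_1-\psi_2)\varphi$ vanishes on an open neighborhood of $\supp(u)$, giving $u((\psi_1-\psi_2)\varphi)=0$ and confirming the definition is independent of $\psi$. For continuity, assume $\varphi_j\to\varphi$ in $C^\infty(\Omega)$; then every $\psi\varphi_j$ has support in the fixed compact set $\supp(\psi)$, and Leibniz's rule together with uniform convergence of $\varphi_j^{(k)}$ on $\supp(\psi)$ implies $\psi\varphi_j\to\psi\varphi$ in $\mathscr{D}(\Omega)$, so $\tilde u(\varphi_j)=u(\psi\varphi_j)\to u(\psi\varphi)=\tilde u(\varphi)$.

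For uniqueness, I would invoke density of $\mathscr{D}(\Omega)$ in $C^\infty(\Omega)$: choose an exhausting sequence of compact sets $K_n\subset \Omega$ and cutoffs $\chi_n\in\mathscr{D}(\Omega)$ with $\chi_n\equiv 1$ on $K_n$, so that $\chi_n\varphi\to\varphi$ in $C^\infty(\Omega)$ for every $\varphi\in C^\infty(\Omega)$. Any two continuous linear extensions of $u$ must agree on the dense subspace $\mathscr{D}(\Omega)$ and hence everywhere on $C^\infty(\Omega)$. The main obstacle is the finite order statement, which relies on correctly identifying $\mathscr{D}_{\overline V}(\Omega)$ as a Fr\'echet space and invoking the metrizable form of continuity; once the reduction $u(\varphi)=u(\psi_0\varphi)$ is in hand, the remaining steps are essentially topological.
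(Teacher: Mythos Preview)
The paper does not supply its own proof of this theorem; it merely cites \cite[Theorem 6.24]{Rudin} and moves on. Your proposal is correct and is essentially the standard argument found in Rudin: reduce to a fixed compact set via a cutoff, use that $\mathscr{D}_{\overline V}(\Omega)$ is Fr\'echet to extract finite order, then check well-definedness, continuity, and uniqueness of the extension by density. There is nothing to compare beyond noting that you have filled in precisely the argument the paper chose to outsource.
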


We note that the extension $\tilde{u}$ as a functional does not depend on the choice of 
the cut-off function $\psi$ (see \cite{Rudin} for a proof). 

We now are ready to define weak fractional derivatives for compactly supported distributions in $\mathscr{D}'(\Omega)$.

    \begin{definition}
        Let $ \alpha >0$ and  $u \in \mathscr{D}^\prime_K(\Omega)$. 
        Define ${^{\pm}}{\mathcal{D}}{^{\alpha}} u:\mathscr{D}(\Omega)\to \R$  respectively by  
        \begin{align*}
           {^{\pm}}{\mathcal{D}}{^{\alpha}} u  (\varphi):= (-1)^{[\alpha]} \tilde{u} ({^{\mp}}{D}{^{\alpha}} \varphi)   
                    = (-1)^{[\alpha]} u(\psi {^{\mp}}{D}{^{\alpha}} \varphi)   
           \qquad \forall \varphi \in \mathscr{D}(\Omega),
        \end{align*}
        where $\tilde{u}$ and $\psi$ are the same as in Theorem \ref{extension_rudin}. 
        
    \end{definition}
 
 The next theorem shows that a compactly supported distribution $u\in \mathscr{D}^\prime_K(\Omega)$ has 
 any order  weak fractional derivative ${^{\pm}}{\mathcal{D}}{^{\alpha}} u$ which belongs to $\mathscr{D}'(\Omega)$.
 
    \begin{theorem}\label{existence}
        Let $\alpha>0$ and suppose $u \in \mathscr{D}^\prime_K(\Omega)$. Then 
        \begin{itemize}
       \item [{\rm (i)}] ${^{\pm}}{\mathcal{D}}{^{\alpha}} u \in \mathscr{D}'(\Omega)$. Moreover, if $K \subseteq [c,d]\subset\subset\Omega$, then $\supp({^{-}}{\mathcal{D}}{^{\alpha}} u)\subseteq(-\infty, d]\cap \Omega$ and $\supp({^{+}}{\mathcal{D}}{^{\alpha}} u)\subseteq[c, \infty)\cap \Omega$.
       \item[{\rm (ii)}] Suppose that $\{ u_j\}_{j=1}^{\infty} \subset \mathscr{D}^\prime_K(\Omega)$
       such that 
        $u_j \rightarrow u$ in $\mathscr{D}^\prime_K(\Omega)$, then ${^{\pm}}{\mathcal{D}}{^{\alpha}} u_j \rightarrow {^{\pm}}{\mathcal{D}}{^{\alpha}} u$ in $\mathscr{D}'(\Omega)$.
        \end{itemize}
    \end{theorem}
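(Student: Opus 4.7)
The plan is to establish (i) by separately verifying linearity, continuity, and the support claim, and then to derive (ii) by passing the defining formula to the limit. Linearity of ${^{\pm}}{\mathcal{D}}{^\alpha} u$ as a functional on $\mathscr{D}(\Omega)$ is immediate from the linearity of the three ingredients: multiplication by the cutoff $\psi$, the classical derivative ${^{\mp}}{D}{^\alpha}$, and the distribution $u$. For continuity, given $\varphi_k \to \varphi$ in $\mathscr{D}(\Omega)$, Lemma \ref{cut_off} yields $\psi\cdot{^{\mp}}{D}{^\alpha}\varphi_k \to \psi\cdot{^{\mp}}{D}{^\alpha}\varphi$ in $\mathscr{D}(\Omega)$, so continuity of $u \in \mathscr{D}'_K(\Omega)$ on $\mathscr{D}(\Omega)$ gives the desired pointwise convergence ${^{\pm}}{\mathcal{D}}{^\alpha} u(\varphi_k)\to{^{\pm}}{\mathcal{D}}{^\alpha} u(\varphi)$. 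I would also record that the value $u(\psi\cdot{^{\mp}}{D}{^\alpha}\varphi)$ is independent of the particular cutoff $\psi$ by Theorem \ref{extension_rudin}, so the definition is unambiguous.

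For the support claim, I would invoke the pollution property from Proposition \ref{Pollution}: if $\supp(\varphi)\subset(d_1,d_2)$, then ${^{-}}{D}{^\alpha}\varphi \equiv 0$ on $(-\infty,d_1)$ while ${^{+}}{D}{^\alpha}\varphi \equiv 0$ on $(d_2,\infty)$. In the defining formula for ${^{\pm}}{\mathcal{D}}{^\alpha} u$, the distribution $u$ is paired with ${^{\mp}}{D}{^\alpha}\varphi$; hence if $\supp(\varphi)$ is separated from $K$ on the side dictated by this complementary pollution, then ${^{\mp}}{D}{^\alpha}\varphi$ vanishes on an open set containing the compact set $K$. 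The product $\psi\cdot{^{\mp}}{D}{^\alpha}\varphi$ inherits that vanishing, so $u(\psi\cdot{^{\mp}}{D}{^\alpha}\varphi)=0$ directly from the definition of $\supp(u)$. This confines $\supp({^{\pm}}{\mathcal{D}}{^\alpha} u)$ to the asserted half-line intersected with $\Omega$.

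For (ii), I would evaluate both sides of the convergence against a fixed test function. For each $\varphi \in \mathscr{D}(\Omega)$, the function $\psi\cdot{^{\mp}}{D}{^\alpha}\varphi$ lies in $\mathscr{D}(\Omega)$ (again by Lemma \ref{cut_off}), so the hypothesis $u_j\to u$ in $\mathscr{D}'_K(\Omega)$ gives $u_j(\psi\cdot{^{\mp}}{D}{^\alpha}\varphi)\to u(\psi\cdot{^{\mp}}{D}{^\alpha}\varphi)$; multiplying by $(-1)^{[\alpha]}$ yields the pointwise convergence ${^{\pm}}{\mathcal{D}}{^\alpha} u_j(\varphi)\to{^{\pm}}{\mathcal{D}}{^\alpha} u(\varphi)$, which is exactly convergence in $\mathscr{D}'(\Omega)$.

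The principal delicacy is in the support argument: one must carefully track the pollution direction of ${^{\mp}}{D}{^\alpha}$ against the asymmetry in the definition of ${^{\pm}}{\mathcal{D}}{^\alpha}$ (each weak one-sided derivative is paired with the opposite classical derivative), and then verify that the set on which $\psi\cdot{^{\mp}}{D}{^\alpha}\varphi$ vanishes is a genuinely \emph{open} neighborhood of $K$ so that the distributional support criterion is applicable. Every other step reduces to properties already established earlier in the excerpt.
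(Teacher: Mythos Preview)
Your proposal is correct and follows essentially the same approach as the paper: linearity is immediate, continuity comes from Lemma~\ref{cut_off}, the support claim from the one-sided vanishing of ${^{\mp}}{D}{^{\alpha}}\varphi$ given by Proposition~\ref{Pollution}, and part (ii) by applying $u_j\to u$ to the fixed element $\psi\,{^{\mp}}{D}{^{\alpha}}\varphi\in\mathscr{D}(\Omega)$. Your treatment of the support claim is in fact more explicit than the paper's one-sentence justification, which simply notes that the supports of ${^{\pm}}{\mathcal{D}}{^{\alpha}}u$ ``pollute that of $u$ to the right/left accordingly.''
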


    \begin{proof}
       (i) The linearity of ${^{\pm}}{\mathcal{D}}{^{\alpha}} u$ is trivial. 
       To show the continuity, it suffices to show that ${^{\pm}}{\mathcal{D}}{^{\alpha}} u$ 
       is sequentially continuous at zero. To the end, let $\{\varphi_k\}_
       {k=1}^{\infty} \subset \mathscr{D}(\Omega)$ so that 
       $\varphi_k \rightarrow 0$ in $\mathscr{D}(\Omega)$. It follows by Lemma \ref{cut_off} that 
        \begin{align*}
           {^{\pm}}{\mathcal{D}}{^{\alpha}}u(\varphi_k) = (-1)^{[\alpha]} u(\psi {^{\mp}}{D}{^{\alpha}} \varphi_k) \to   u(\psi {^{\mp}}{D}{^{\alpha}} (0)) = 0
           \qquad\mbox{as }k\to \infty.
        \end{align*}
        
        Since for any $\varphi \in  \mathscr{D}(\Omega)$, ${^{\pm}}{\mathcal{D}}{^{\alpha}}\varphi
        \in {^{\pm}}{C}{^{\infty}_{0}}(\Omega)$, then the supports of ${^{\pm}}{\mathcal{D}}{^{\alpha}}u$
        pollute that of $u$ to the right/left accordingly.  
        
        (ii) Suppose that $u_j \to u$ in $\mathscr{D}'(\Omega)$, we have for any $\varphi\in \mathscr{D}(\Omega)$
        \begin{align*}
        {^{\pm}}{\mathcal{D}}{^{\alpha}} u_j (\varphi) : = (-1)^{[\alpha]} u_j (\psi {^{\mp}}{D}{^{\alpha}} \varphi) 
        \underset{j\to \infty}{\longrightarrow}  (-1)^{[\alpha]} u( \psi {^{\mp}}{D}{^{\alpha}} \varphi) 
        ={^{\pm}}{\mathcal{D}}{^{\alpha}} u(\varphi).
        \end{align*}
        Thus, ${^{\pm}}{\mathcal{D}}{^{\alpha}} u_j \rightarrow {^{\pm}}{\mathcal{D}}{^{\alpha}} u$ in $\mathscr{D}'(\Omega)$ as $j\to\infty$.
        The proof is complete.
    \end{proof}

    \begin{proposition}
        Let $\alpha>0$ and suppose $u \in \mathscr{D}^\prime_K(\Omega)$. Then ${^{\pm}}{\mathcal{D}}{^{\alpha}} u  \to \mathcal{D}u $ in $\mathscr{D}'(\Omega)$ as $\alpha \rightarrow 1^-$ and ${^{\pm}}{\mathcal{D}}{^{\alpha}} u  \to \mathcal{D}u $ in $\mathscr{D}'(\Omega)$ as $\alpha \rightarrow 1^+$.
    \end{proposition}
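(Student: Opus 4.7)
The plan is to reduce the proposition to the single claim that, for each fixed $\varphi\in\mathscr{D}(\Omega)$, the cutoff product $\alpha\mapsto \psi\cdot {^{\mp}}{D}{^{\alpha}}\varphi$ converges in $\mathscr{D}(\Omega)$ to $\mp\varphi'$ as $\alpha\to 1^{\pm}$, where $\psi\in C_{0}^{\infty}(\Omega)$ is a fixed cutoff with $\psi\equiv 1$ on $K=\mbox{supp}(u)$. By Lemma~\ref{cut_off}, the product already lies in $\mathscr{D}(\Omega)$ for each $\alpha$ near $1$, so once this $\mathscr{D}$-convergence is established the sequential continuity of $u$ on $\mathscr{D}(\Omega)$, combined with the defining identity
\[
{^{\pm}}{\mathcal{D}}{^{\alpha}} u(\varphi)=(-1)^{[\alpha]}\,u\bigl(\psi\cdot {^{\mp}}{D}{^{\alpha}}\varphi\bigr),
\]
will yield the desired limit $\mathcal{D}u(\varphi)=-u(\varphi')$ after tracking the sign $(-1)^{[\alpha]}$, which equals $+1$ as $\alpha\to 1^{-}$ (since $[\alpha]=0$) and equals $-1$ as $\alpha\to 1^{+}$ (since $[\alpha]=1$).

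For the $\mathscr{D}(\Omega)$-convergence, condition (a) in the definition of sequential convergence in $\mathscr{D}(\Omega)$ is automatic because $\mbox{supp}(\psi\cdot {^{\mp}}{D}{^{\alpha}}\varphi)\subseteq \mbox{supp}(\psi)$ is a fixed compact subset of $\Omega$. For condition (b), Leibniz's rule reduces the problem to uniform convergence of each derivative $D^{j}{^{\mp}}{D}{^{\alpha}}\varphi$ on $\mbox{supp}(\psi)$ as $\alpha\to 1^{\pm}$. In the regime $\alpha\to 1^{-}$ one uses the representation ${^{\mp}}{D}{^{\alpha}}\varphi=\pm\frac{d}{dx}{^{\mp}}{I}{^{1-\alpha}}\varphi$ and the classical fact that ${^{\mp}}{I}{^{1-\alpha}}\varphi\to\varphi$ in the $C^{\infty}$-topology on compacts as $\alpha\to 1^{-}$; the conclusion is that $D^{j}{^{\mp}}{D}{^{\alpha}}\varphi\to \mp D^{j}\varphi'$ uniformly on $\mbox{supp}(\psi)$ for every $j\geq 0$. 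In the regime $\alpha\to 1^{+}$, where the definition uses the integer $n=2$, one writes ${^{\mp}}{D}{^{\alpha}}\varphi=\pm\frac{d^{2}}{dx^{2}}{^{\mp}}{I}{^{2-\alpha}}\varphi$, integrates by parts once inside the convolution (using the compact support of $\varphi$ to eliminate the boundary terms), and then applies the same $\sigma\to 0^{+}$ limit to ${^{\mp}}{I}{^{2-\alpha}}\varphi'={^{\mp}}{I}{^{1-(\alpha-1)}}\varphi'$ to conclude again that the limit is $\mp\varphi'$ uniformly, together with all of its derivatives.

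The main obstacle is precisely the uniform $C^{\infty}$-convergence of the Riemann--Liouville operators at the transition order $\alpha=1$, because the defining formulas employ different integer heights $n$ on either side of $1$ and the convolution kernels degenerate in delicate ways as $\alpha\to 1$; the integration-by-parts reduction for $\alpha\to 1^{+}$ is the only subtle analytic manipulation needed. Once this step is secured, the remainder of the argument is routine verification: the factor $(-1)^{[\alpha]}$ absorbs the sign of $\mp\varphi'$ consistently on both sides of $1$ to produce $-u(\varphi')=\mathcal{D}u(\varphi)$, and the claimed convergence ${^{\pm}}{\mathcal{D}}{^{\alpha}}u\to\mathcal{D}u$ in $\mathscr{D}'(\Omega)$ then follows at once from the pointwise (on test functions) definition of convergence in $\mathscr{D}'(\Omega)$ and the sequential continuity of $u$.
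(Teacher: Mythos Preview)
Your proof is essentially the same as the paper's: both unpack the definition ${^{\pm}}{\mathcal{D}}{^{\alpha}} u(\varphi) = (-1)^{[\alpha]}\, u(\psi\,{^{\mp}}{D}{^{\alpha}}\varphi)$ and pass to the limit by combining the classical consistency of ${^{\mp}}{D}{^{\alpha}}\varphi$ with $D\varphi$ and the continuity of $u$ on $\mathscr{D}(\Omega)$. The paper's argument is a terse two-line display that leaves the mode of convergence implicit, whereas you additionally spell out why $\psi\,{^{\mp}}{D}{^{\alpha}}\varphi$ converges in the $\mathscr{D}(\Omega)$-topology (fixed support from $\psi$, uniform $C^{\infty}$ convergence on $\mathrm{supp}(\psi)$ via ${^{\mp}}{I}{^{\sigma}}\to\mathrm{id}$ as $\sigma\to 0^{+}$, plus an integration by parts inside the $n=2$ formula for the $\alpha\to 1^{+}$ side).
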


    \begin{proof}
        For any $\varphi \in C^{\infty}_{0}(\Omega)$, we have 
        \begin{align*}
             {^{\pm}}{\mathcal{D}}{^{\alpha}} u (\varphi) = (-1)^{[\alpha]}   u(\psi {^{\mp}}{D}{^{\alpha}} \varphi) \underset{\alpha\to 1^-}{\longrightarrow}  -u(\psi D \varphi)= -u(D\varphi) = \mathcal{D}u(\varphi),\\
              {^{\pm}}{\mathcal{D}}{^{\alpha}} u (\varphi) = (-1)^{[\alpha]}   u(\psi {^{\mp}}{D}{^{\alpha}} \varphi) \underset{\alpha\to 1^+}{\longrightarrow}  - u(\psi D \varphi)=-u(D\varphi) =\mathcal{D}u(\varphi). 
        \end{align*}
        Hence, the assertions hold.
    \end{proof}

    \begin{proposition}
        Let $\Omega = (a,b)$ and $0<\alpha<1$. Suppose that $ u \in \mathscr{D}^\prime_K(\Omega)$ and $\eta \in C^{\infty}(\Omega)$, 
        then there holds the following product rule: 
        \begin{align}\label{product_rule_dist}
         {^{\pm}}{\mathcal{D}}{^{\alpha}} (\eta u) 
         = \eta {^{\pm}}{\mathcal{D}}{^{\alpha}} u -  \sum_{k=1}^{m} D^k \eta \, {^{\pm}}{I}{^{k-\alpha}} u
         - C_{m,\alpha}  (\eta^{(m+1)} * \mu_{\pm}) \,  (\kappa_{\pm}^{-\alpha} *\psi u),
        \end{align}
        where
        \begin{align} \label{C_kalpha}
        C_{k,\alpha} &:= \dfrac{\Gamma(1+\alpha)}{\Gamma(k+1) \Gamma(1-k + \alpha)},\\
        {^{\pm}}{I}{^{k-\alpha}} u (\varphi) &:= C_{k,\alpha} u\bigl(\psi {^{\mp}}{I}{^{k-\alpha}} \varphi \bigr)
        \qquad \forall \varphi\in \mathscr{D}(\Omega). \label{fractional_int_dist}
        \end{align}
       
    \end{proposition}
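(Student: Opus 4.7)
The plan is to reduce this distributional product rule to the smooth product rule of Theorem \ref{WeakProductRule} via the extension principle of Theorem \ref{extension_rudin}. First, I would unpack the left-hand side: by the definition of the weak fractional derivative of a compactly supported distribution, and since $\supp(\eta u)\subseteq\supp(u)\subseteq K$, we have
\begin{align*}
{^{\pm}}{\mathcal{D}}{^{\alpha}}(\eta u)(\varphi) = \widetilde{\eta u}({^{\mp}}{D}{^{\alpha}}\varphi) = u\bigl(\psi\,\eta\, {^{\mp}}{D}{^{\alpha}}\varphi\bigr) \qquad\forall \varphi\in\mathscr{D}(\Omega),
\end{align*}
where $\psi\in C^\infty_0(\Omega)$ is a cutoff with $\psi\equiv 1$ on $K$. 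Since $\eta\,\psi\in C^\infty_0(\Omega)$ and the argument of $u$ is smooth with compact support, this pairing is well-defined.

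Next, I would apply the classical product rule (Theorem \ref{WeakProductRule}) to the smooth pair $(\varphi,\eta)$ — note the roles are swapped since $\varphi$ is the rough factor from the viewpoint of the theorem statement, but here both are smooth so the identity holds pointwise. This gives
\begin{align*}
\eta\, {^{\mp}}{D}{^{\alpha}}\varphi = {^{\mp}}{D}{^{\alpha}}(\eta\varphi) - \sum_{k=1}^{m} C_{k,\alpha}\, {^{\mp}}{I}{^{k-\alpha}}\varphi\cdot D^{k}\eta - {^{\mp}}{R}{^{\alpha}_{m}}(\varphi,\eta).
\end{align*}
Substituting into the expression for ${^{\pm}}{\mathcal{D}}{^{\alpha}}(\eta u)(\varphi)$ and distributing $u(\psi\,\cdot)$ across the three terms, I would identify the first term with $(\eta\,{^{\pm}}{\mathcal{D}}{^{\alpha}} u)(\varphi)$ by using $u(\psi\,{^{\mp}}{D}{^{\alpha}}(\eta\varphi)) = {^{\pm}}{\mathcal{D}}{^{\alpha}} u(\eta\varphi)$ together with the standard multiplication of a $C^\infty$ function with a distribution, and the second sum with $\sum_{k=1}^{m} D^{k}\eta\, {^{\pm}}{I}{^{k-\alpha}}u$ using the definition \eqref{fractional_int_dist}.

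The main obstacle — and the step that requires the most care — is the remainder term $u\bigl(\psi\,{^{\mp}}{R}{^{\alpha}_{m}}(\varphi,\eta)\bigr)$. I would rewrite the iterated integral defining ${^{\mp}}{R}{^{\alpha}_{m}}$ by switching the order of integration and recognizing the inner integral as a convolution of $\eta^{(m+1)}$ against a truncated polynomial kernel $\mu_\pm$ (the factor $(z-x)^m$ or $(x-z)^m$ integrated up to a moving endpoint), and the outer integral as a convolution against $\kappa^{-\alpha}_\pm$. Passing these convolutions under the distributional pairing with $\psi u$ (which is legitimate since $\psi u$ is a compactly supported distribution and the kernels are smooth where they matter) produces exactly $C_{m,\alpha}(\eta^{(m+1)}*\mu_\pm)(\kappa^{-\alpha}_\pm*\psi u)$, matching the stated formula. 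Finally, combining all three identifications, I would conclude \eqref{product_rule_dist} as an equality in $\mathscr{D}'(\Omega)$; the continuity already established in Theorem \ref{existence} guarantees both sides are bona fide distributions, and the identity holding against every $\varphi\in\mathscr{D}(\Omega)$ completes the proof.
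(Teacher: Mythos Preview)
Your proposal is correct and follows essentially the same route as the paper: unwind the definition to $u(\psi\,\eta\,{^{\mp}}{D}{^{\alpha}}\varphi)$, apply the smooth fractional product rule to the pair $(\varphi,\eta)$ to split into three pieces, identify the first two with $\eta\,{^{\pm}}{\mathcal{D}}{^{\alpha}} u$ and the sum $\sum D^{k}\eta\,{^{\pm}}{I}{^{k-\alpha}}u$, and rewrite the remainder as the stated convolution against $\psi u$. The paper's proof does exactly this, labeling the three pieces $I$, $II$, $III$ and treating the remainder with the same convolution manipulation you describe.
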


    \begin{proof} By the fractional order product rule, we have 
        \begin{align*}
            {^{\pm}}{\mathcal{D}}{^{\alpha}} (\eta u)(\varphi) 
            :& = \eta u ( \psi {^{\mp}}{D}{^{\alpha}} \varphi ) 
            =   u( \eta \psi {^{\mp}}{D}{^{\alpha}} \varphi ) 
            = (u , \psi \eta {^{\mp}}{D}{^{\alpha}} \varphi ) \\
            &= u\bigl(\psi {^{\mp}}{D}{^{\alpha}} (\varphi \eta) \bigr)  - u \Bigl( \psi \sum_{k=1}^{m} C_{k,\alpha} {^{\mp}}{I}{^{k-\alpha}} \varphi D^{k} \eta \Bigr) 
            - u \bigl(\psi {^{\mp}}{R}{^{\alpha}}(\varphi, \eta) \bigr) \\
            &=: I - II - III
        \end{align*}
        where 
        \begin{align*}
        {^{+}}{R}{^{\alpha}_{m}}(\varphi, \eta) &: = \dfrac{(-1)^{m+1}}{m! \Gamma(-\alpha)} \int_{x}^{b} \dfrac{\varphi(y)}{(y-x)^{1+\alpha}}\,dy \int_{x}^{y} \eta^{(m+1)}(z) (z-x)^{m} \,dz 
        \end{align*}
        with a similar formula for ${^{-}}{R}{^{\alpha}_{m}} (\varphi, \eta)$.  
        
        For terms $I$ and $II$  we have 
        \begin{align*}
            I:& =  u\Bigl(\psi {^{\mp}}{D}{^{\alpha}} (\varphi \eta ) \Bigr)  
            ={^{\pm}}{\mathcal{D}}{^{\alpha}}u(\eta \varphi) 
            = \eta {^{\pm}}{\mathcal{D}}{^{\alpha}} u(\varphi),\\
           II:&= u\Bigl( \psi \sum_{k=1}^{m} C_{k,\alpha} {^{\mp}}{I}{^{k-\alpha}} \varphi D^{k} \eta \Bigr)
            = \sum_{k=1}^{m} C_{k,\alpha} u\bigl(\psi {^{\mp}}{I}{^{k-\alpha}} \varphi D^{k} \eta \bigr) \\ 
           &= \sum_{k=1}^{m} C_{k,\alpha} D^{k} \eta \, u\bigl(\psi {^{\mp}}{I}{^{k-\alpha}} \varphi \bigr) 
          = \sum_{k=1}^{m} D^k \eta \, {^{\pm}}{I}{^{k-\alpha}} u (\varphi).
       \end{align*}

        Finally, to simplify term $III$, we rewrite the remainder formula as follows: 
        \begin{align*}
            {^{+}}{R}{^{\alpha}_{m}}(\varphi, \eta) 
            &: = \dfrac{(-1)^{m+1}}{m! \Gamma(-\alpha)} \int_{x}^{b} \int_{x}^{y} \dfrac{\varphi(y)}{(y-x)^{1+\alpha}} \eta^{(m+1)}(z)(z-x)^{m} \,dzdy \\ 
            &= \dfrac{(-1)^{m+1}}{m!\Gamma(-\alpha)} \int_{x}^{b} \dfrac{\varphi(y)}{(y-x)^{1+\alpha}} (\eta^{(m+1)} * \mu_{+})(y)\,dy \\ 
            &= C_{m , \alpha} \bigl( \varphi (\eta^{(m+1)} * \mu_{+}) * \kappa_+^{-\alpha} \bigr)(x).
        \end{align*}
        Then we have 
        \begin{align*}
            III :&= u\bigl( \psi {^{+}}{R}{^{\alpha}}(\varphi, \eta) \bigr) \\ 
            &=  u \bigl( C_{m,\alpha} \psi (\varphi (\eta^{(m+1)} * \mu_{+} ) *\kappa^{-\alpha}_{+}) \bigr) \\
            &= C_{m,\alpha} u \bigl(\psi (\varphi (\eta^{(m+1)} * \mu_{+} ) * \kappa^{-\alpha}_{+}) \bigr) \\ 
            &=  C_{m,\alpha}  (\kappa_{+}^{-\alpha} * \psi u) \bigl( \varphi (\eta^{(m+1)} *\mu_+) \bigr)  \\
            &=C_{m,\alpha}  (\eta^{(m+1)} * \mu_{+}) \,  (\kappa_{+}^{-\alpha} * \psi u) ( \varphi).  
            &= C_{m,\alpha} (\eta^{(m+1)} * \mu_{+}) \cdot (\kappa_{+}^{-\alpha} *(\psi u)) (\varphi). 
        \end{align*}
   The desired formula \eqref{product_rule_dist} follows from combining the above three identities. The proof is complete. 
    \end{proof}

  \subsection{Weak Fractional Derivatives for Distributions on Finite Intervals}\label{sec-6.3}
  In the previous subsection we introduce a fractional order derivative notion for compactly supported 
  distributions in $\mathscr{D}_K^\prime(\Omega)$. The aim of this subsection is to introduce a fractional 
  derivative notion for general distributions in $\mathscr{D}^\prime(\Omega)$ when $\Omega=(a,b)$ is finite. 
  We shall address the case $\Omega=\R$ in the next subsection. 
  
First, we consider the class of one sided generalized functions in 
${^{\pm}}{\mathscr{D}}^\prime (\Omega):= ({^{\pm}}{\mathscr{D}} (\Omega ))^\prime$,
which are proper subspaces of ${\mathscr{D}}^\prime (\Omega)$. By Proposition \ref{invarance} we know that
${^{\pm}}{\mathscr{D}}(\Omega)$ are respectively invariant under the mappings ${^{\pm}}{D}{^{\alpha}}$. 
This fact then makes defining ${^{\pm}}{\mathcal{D}}{^{\alpha}}u$
for $u\in {^{\pm}}{\mathscr{D}}^\prime (\Omega)$ a trivial task.

 \begin{definition}
	Let $ \alpha >0$ and  $u \in {^{\pm}}{\mathscr{D}}^\prime (\Omega) $. 
	Define ${^{\pm}}{\mathcal{D}}{^{\alpha}} u: {^{\pm}}{\mathscr{D}}(\Omega) \to \R$ respectively by  
	\begin{align}\label{eq6.3}
	{^{\pm}}{\mathcal{D}}{^{\alpha}} u  (\varphi):=  
	 (-1)^{[\alpha]} u({^{\mp}}{D}{^{\alpha}} \varphi)   
	\qquad \forall \varphi \in {^{\pm}}{\mathscr{D}}(\Omega) .
	\end{align}
  
\end{definition}

Clearly, ${^{\pm}}{\mathcal{D}}{^{\alpha}} u$ is well defined and ${^{\pm}}{\mathcal{D}}{^{\alpha}} u
\in {^{\pm}}{\mathscr{D}}^\prime (\Omega) $, respectively. It also can be shown that many other properties 
hold for the fractional order derivative operators ${^{\pm}}{\mathcal{D}}{^{\alpha}}$ on the one sided 
generalized function spaces ${^{\pm}}{\mathscr{D}}^\prime (\Omega)$. We leave the verification to 
the interested reader.   

To define fractional order derivatives for distributions in ${\mathscr{D}}^\prime (\Omega)\setminus {^{-}}{\mathscr{D}}^\prime (\Omega)\cup {^{+}}{\mathscr{D}}^\prime (\Omega)$, we need to 
construct``good" extensions for any distribution $u\in {\mathscr{D}}^\prime (\Omega)$ to 
${^{-}}{\mathscr{D}}^\prime (\Omega)$ and  ${^{+}}{\mathscr{D}}^\prime (\Omega)$. 
This will be done below by using the partition of unity theorem to define 
$u (\varphi):= \sum_{j = 1}^{\infty} u( \psi_j  \varphi)$ for any $\varphi\in {^{\pm}}{\mathscr{D}}(\Omega)$.

Let $\{I_\beta\}$ be a family of open subintervals of $(a,b)$ which forms a covering of $\Omega$. 
By the  partition of unity theorem (cf. \cite{Rudin}),  there exists a subsequence $\{I_j\}_{j =1}^{\infty} \subset \{I_\beta\}$ and a partition  of the unity $\{\psi_j\}_{j = 1}^{\infty}$
subordinated to $\{I_j\}_{j= 1}^{\infty}$, namely, $\psi_j\in C^\infty_0(I_j)$ for $j\geq 1$ and $\sum \psi_j(x)\equiv 1$ 
on every compact subset $K$ of $\Omega$ and the sum is a finite sum for every $x\in K$.

\begin{definition}\label{def-partition}
	Let $ \alpha >0$ and  $u \in  {\mathscr{D}}^\prime (\Omega) $. 
	Define ${^{\pm}}{\mathcal{D}}{^{\alpha}} u:  {\mathscr{D}}(\Omega) \to \R$ respectively by  
	\begin{align}\label{eq6.4}
	{^{\pm}}{\mathcal{D}}{^{\alpha}} u  (\varphi):=  
	(-1)^{[\alpha]} \sum_{j = 1}^{\infty} u( \psi_j {^{\mp}}{D}{^{\alpha}} \varphi)   
	\qquad \forall \varphi \in  {\mathscr{D}}(\Omega) .
	\end{align}

\end{definition}

We claim that ${^{\pm}}{\mathcal{D}}{^{\alpha}} u$ is well defined and ${^{\pm}}{\mathcal{D}}{^{\alpha}} u \in  {\mathscr{D}}^\prime (\Omega)$, respectively.
We leave the verification to the interested reader.

  \subsection{Weak Fractional Derivatives for Distributions on $\R$}\label{sec-6.4}
  To define Riemann-Liouville fractional order derivatives for distributions in ${\mathscr{D}}^\prime (\R)$ is more 
  complicated than in ${^{\pm}}{\mathscr{D}}^\prime (\Omega)$; the complication is due
  to the fact that the kernel functions $\kappa_{\pm}^{\alpha} \not\in L^1(\R)$ and the pollutions of ${^{\pm}}{\mathcal{D}}{^{\alpha}}\varphi(x)$ for $\varphi\in \mathscr{D}(\R)$ do not decay fast enough 
  when $x\to \pm \infty$. 
  
  We first consider the simpler case of Fourier fractional order derivatives for tempered distributions 
  in $\mathcal{S}^\prime(\R)$. By Proposition \ref{invaranceF} we know that the Schwartz space $\mathcal{S}(\R)$ 
  is invariant under the Fourier derivative operator ${^{\mathcal{F}}}{D}{^{\alpha}}$. This allows us easily 
  to define  Fourier fractional derivatives for tempered distributions as follows.
  
  \begin{definition}
 	Let $ \alpha >0$ and  $u \in  {\mathcal{S}}^\prime (\R) $. 
 	Define ${^{\mathcal{F}}}{\mathcal{D}}{^{\alpha}} u:  \mathcal{S}(\R) \to \R$ by  
 	\begin{align}\label{eq6.5}
 	 {^{\mathcal{F}}}{\mathcal{D}}{^{\alpha}} u  (\varphi):=  
 	(-1)^{[\alpha]} u \bigl( {^{\mathcal{F}}}{D}{^{\alpha}} \varphi \bigr)  
 	\qquad \forall \varphi \in  \mathcal{S}(\R).
 	\end{align} 
\end{definition}
  
It is easy to verify that ${^{\mathcal{F}}}{\mathcal{D}}{^{\alpha}} u$ is well defined and 
${^{\mathcal{F}}}{\mathcal{D}}{^{\alpha}} u \in \mathcal{S}^\prime(\R)$. It also can be shown that 
many other properties hold for the fractional order derivative operator ${^{\mathcal{F}}}{\mathcal{D}}{^{\alpha}}$ 
on the space of tempered distributions ${\mathcal{S}}^\prime (\R)$. We leave the verification to 
the interested reader.

To define fractional order derivatives for distributions in
${\mathscr{D}}^\prime (\R)\setminus {\mathcal{S}}^\prime (\R)$, we need to extend the domain 
of $u\in {\mathscr{D}}^\prime (\R)$ from ${\mathscr{D}}(\R)$ to ${\mathcal{S}}(\R)$ (or  ${^{\pm}}{\mathscr{D}}(\R)$). 
Again, this will be done by using the partition of the unity theorem as seen above 
to define 
$u (\varphi):= \sum_{j = 1}^{\infty} u( \psi_j  \varphi)$ for any $\varphi \in  {^{\pm}}{\mathscr{D}}(\R)$.

Let $\{I_\beta\}$ be a family of open finite subintervals of $\R$ which forms a covering of $\R$. 
By the  partition of unity theorem (cf. \cite{Rudin}),  there exists a subsequence $\{I_j\}_{j =1}^{\infty} \subset \{I_\beta\}$ and a partition  of the unity $\{\psi_j\}_{j = 1}^{\infty}$
subordinated to $\{I_j\}_{j= 1}^{\infty}$, namely, $\psi_j\in C^\infty_0(I_j)$ 
for $j\geq 1$ and $\sum \psi_j(x)\equiv 1$ on every compact subset $K$ of $\R$ 
and the sum is a finite sum for every $x\in K$.

	\begin{definition}\label{def-partition_R}
		Let $ \alpha >0$ and  $u \in  {\mathscr{D}}^\prime (\R) $. 
		Define ${^{\pm}}{\mathcal{D}}{^{\alpha}} u:  {\mathscr{D}}(\R) \to \R$ respectively by  
		\begin{align}\label{eq6.6}
		{^{\pm}}{\mathcal{D}}{^{\alpha}} u  (\varphi):=  
		(-1)^{[\alpha]} \sum_{j = 1}^{\infty} u( \psi_j {^{\mp}}{D}{^{\alpha}} \varphi)   
		\qquad \forall \varphi \in  {\mathscr{D}}(\R) .
		\end{align}

	\end{definition} 

 We claim that ${^{\pm}}{\mathcal{D}}{^{\alpha}} u$ is well defined and ${^{\pm}}{\mathcal{D}}{^{\alpha}} u \in  {\mathscr{D}}^\prime (\R) $, respectively.
Again, we leave the verification to the interested reader.

\section{Conclusion}\label{sec-7}
 In this paper we first recalled various definitions of classical fractional 
  	derivatives and gave a new interpretation of the classical theory from a different 
   perspective, and especially emphasized the importance of the Fundamental 
   Theorem of Classical  Fractional Calculus (FTcFC) and its ramifications in the classical theory. 
   We then presented a self-contained new theory of weak fractional differential 
   calculus. The crux of this new theory 
   is the introduction of a weak fractional derivative notion which is a natural 
   generalization of integer order weak derivatives; it also helps to unify multiple existing 
   fractional derivative definitions and has the potential to be easily extended to higher dimensions.  Various calculus rules including a Fundamental Theorem of Weak 
   Fractional Calculus (FTwFC), product and chain rules,
   and integration by parts formulas were established for weak fractional derivatives 
   and relationships with existing classical 
   fractional derivatives were also obtained.   This weak fractional differential calculus 
   theory lays down the ground work for developing a new fractional order Sobolev space theory   in a companion paper \cite{Feng_Sutton1a}. 
   Furthermore, the notion of weak fractional derivatives was systematically extended 
   to general distributions instead of only to some special distributions as done in the literature. 
   It is expected (and our hope, too) that these newly developed theories of weak fractional 
   differential calculus and fractional order Sobolev spaces will lay down a solid theoretical  
   foundation for systematically and rigorously developing a fractional calculus of variations 
   theory and a fractional PDE theory as well as their numerical solutions. Moreover, we hope this work  will stimulate more research on and applications of fractional calculus and 
   fractional differential equations in the near future.


\end{document}